\tikzset{%
  operad style/.style={fill=white,inner sep=2.5pt},
  baseline=(current  bounding  box.center),
  ampersand replacement=\&, row sep=2em,column sep=2em,
  std/.style={->,font=\scriptsize}
}
\tikzset{%
  clr style/.style={fill=white,inner sep=2.5pt},
  baseline=(current  bounding  box.center),
  ampersand replacement=\&, row sep=2em,column sep=2em,
  std/.style={->,font=\scriptsize}
}
\newenvironment{tikzdiag}[2]
{
\begin{tikzpicture}[clr style]
\matrix (m) [matrix of math nodes, row sep=#1em, column sep=#2em]
}
{
\end{tikzpicture}
}
\def\release#1{\let#1\undefined}
\newlist{pfitems}{itemize}{1}
\setlist[pfitems]{label=--}
\providecommand{\leftsquigarrow}{%
  \mathrel{\mathpalette\reflect@squig\relax}%
}
\newcommand{\reflect@squig}[2]{%
  \reflectbox{$\m@th#1\rightsquigarrow$}%
}
\newcommand{\Z}{\mathbb{Z}}
\newcommand{\kk}{\Bbbk}
\newcommand{\F}{\mathbb{F}}
\newcommand{\FF}{\mathbb{F}}
\newcommand{\mm}{\mathfrak{m}}
\newcommand{\curv}{{\mathsf{curv}}}
\DeclareMathOperator{\MC}{\mathrm{MC}}
\newcommand{\MCAs}{\MC_{\Ass}}
\DeclareMathOperator{\sMC}{\mathcal{MC}}
\DeclareMathOperator{\Hom}{\mathrm{Hom}}
\DeclareMathOperator{\End}{\mathrm{End}}
\DeclareMathOperator{\id}{\mathrm{id}}
\newcommand{\op}{\mathrm{op}}
\newcommand{\cat}[1]{\mathsf{#1}}
\newcommand{\catC}{\cat{C}}
\newcommand{\dga}{\mathsf{dga}}
\newcommand{\Ch}{\mathsf{Ch}}
\newcommand{\Chain}{\mathsf{Ch}_{\geq 0}}
\newcommand{\cChain}{\mathsf{Ch}^{\geq 0}}
\newcommand{\Set}{\mathsf{Set}}
\newcommand{\sSet}{s\mathsf{Set}}
\newcommand{\Kan}{\mathsf{KanCplx}}
\DeclareMathOperator{\cdga}{\mathsf{ComdgAlg}}
\DeclareMathOperator{\Vect}{\mathsf{Vect}}
\newcommand{\maps}{\colon}
\newcommand{\xto}[1]{\xrightarrow{#1}}
\newcommand{\fib}{\twoheadrightarrow}
\newcommand{\weq}{\xrightarrow{\sim}}
\newcommand{\trivfib}{\overset{\sim}{\twoheadrightarrow}}
\newcommand{\afib}{\trivfib}
\newcommand{\emb}{\hookrightarrow}
\newcommand{\al}{\alpha}
\newcommand{\be}{\beta}
\newcommand{\ga}{\gamma}
\newcommand{\Del}{\Delta}
\newcommand{\del}{\delta}
\newcommand{\ro}{\rho}
\newcommand{\Om}{\Omega}
\newcommand{\om}{\omega}
\newcommand{\tha}{\theta}
\newcommand{\Ph}{\Phi}
\newcommand{\vphi}{\varphi}
\newcommand{\vph}{\varphi}
\newcommand{\el}{\ell}
\newcommand{\si}{\sigma}
\newcommand{\ta}{\tau}
\newcommand{\bb}[1]{\vec{#1}}
\newcommand{\ba}[1]{\bar{#1}}
\newcommand{\ti}[1]{\tilde{#1}}
\newcommand{\wh}[1]{\widehat{#1}}
\newcommand{\und}[1]{{\underline{#1}}}
\newcommand{\ov}[1]{{\overline{#1}}}
\newcommand{\bul}{\bullet}
\newcommand{\cF}{\mathcal{F}}
\newcommand{\cB}{\mathcal{B}}
\newcommand{\cL}{\mathcal{L}}
\newcommand{\cc}{\circ}
\newcommand{\iso}{\cong}
\newcommand{\sse}{\subseteq}
\newcommand{\st}{~ \left. \right \vert ~} 
\DeclareMathOperator{\diag}{\mathrm{diag}}
\newcommand{\spann}{\mathrm{span}}
\newcommand{\tensor}{\otimes}
\newcommand{\dsum}{\oplus}
\newcommand{\ev}{\mathrm{ev}}
\newcommand{\pr}{\mathrm{pr}}
\DeclareMathOperator{\ppr}{Pr}
\DeclareMathOperator{\im}{\mathrm{im}}
\DeclareMathOperator{\sym}{\mathrm{sym}}
\renewcommand{\deg}[1]{\left \lvert #1 \right \rvert}
\newcommand{\T}{\bar{T}}
\newcommand{\sQ}{Q^{1}}
\newcommand{\bs}{\mathbf{s}}
\newcommand{\simh}{\simeq_{\Delta}}
\newcommand{\plim}{\varprojlim}
\newcommand{\cinf}{C^{\infty}}
\newlist{pfcases}{itemize}{1}
\setlist[pfcases]{label=--}
\newlist{pfsteps}{itemize}{1}
\setlist[pfsteps]{label={}}
\theoremstyle{plain}
\newtheorem{theorem}{Theorem}[section]
\newtheorem*{theorem*}{Theorem}
\newtheorem*{theorem1}{Theorem 1}
\newtheorem*{theorem2}{Theorem 2}
\newtheorem*{theorem3}{Theorem 3}
\newtheorem{proposition}[theorem]{Proposition}
\newtheorem{lemma}[theorem]{Lemma}
\newtheorem*{lemma*}{Lemma}
\newtheorem*{corollary*}{Corollary}
\newtheorem{corollary}[theorem]{Corollary}
\theoremstyle{definition}
\newtheorem{definition}[theorem]{Definition}
\newtheorem*{ass*}{Assumption}
\newtheorem*{notation*}{Notation}
\newtheorem*{GMThm*}{Goldman-Millson Theorem}
\newtheorem*{GMThmLinf*}{$L_\infty$-Goldman-Millson Theorem}
\theoremstyle{remark}
\newtheorem{remark}[theorem]{Remark}
\newtheorem{example}[theorem]{Example}
\numberwithin{equation}{section}
\newcommand{\qf}[2]{#1/\cF_{#2}#1}
\newcommand{\ctensor}{\widehat{\otimes}}
\renewcommand{\Ch}{\mathsf{Ch}^{\ast}}
\newcommand{\LdQ}[1]{(L^{#1},d^{#1},Q^{#1})}
\newcommand{\spQ}{Q^{\prime 1}}
\renewcommand{\catC}{\cat{C}}
\newcommand{\sThe}{\Theta^1}
\newcommand{\sPsi}{\Psi^1}
\newcommand{\sPh}{\Ph^{1}_1}
\DeclareMathOperator{\cVect}{\widehat{\mathsf{Vect}}}
\DeclareMathOperator{\cCh}{\widehat{\mathsf{Ch}^\ast}}
\DeclareMathOperator{\fVect}{\mathsf{FiltVect}}
\DeclareMathOperator{\cLie}{\widehat{\mathsf{Lie}}\lbrack 1\rbrack_\infty}
\DeclareMathOperator{\strcLie}{\widehat{\mathsf{Lie}}\lbrack 1\rbrack^{\mathrm{str}}_\infty}
\newcommand{\ainf}{A_{\infty}}
\newcommand{\linf}{L_{\infty}}
\newcommand{\Ass}{\mathrm{As}}
\DeclareMathOperator{\As}{\mathsf{As}\lbrack 1\rbrack_\infty }
\DeclareMathOperator{\cAs}{\widehat{\mathsf{As}}\lbrack 1\rbrack_\infty} 
\newcommand{\AdQ}[1]{(A^{#1},d^{#1},Q^{#1})}
\newcommand{\ph}{\phi}
\newcommand{\NI}{N(\Del^1)}
\newcommand{\tim}{\times}
\newcommand{\Tha}{\Theta}
\newcommand{\ssSet}{ss\mathsf{Set}}
\renewcommand{\dga}{\mathsf{dgAlg}}
\newcommand{\sdga}{s\mathsf{dgAlg}}
\newcommand{\sC}{s\cat{C}}
\newcommand{\ssC}{ss\cat{C}}
\newcommand{\bl}{\bul}
\DeclareMathOperator{\Disc}{Disc}
\newcommand{\VectF}{\Vect(\FF)}
\newcommand{\sVectF}{s\mathsf{Vect}(\FF)}
\newcommand{\fVectF}{\fVect(\FF)}
\newcommand{\cVectF}{\cVect(\FF)}
\newcommand{\ChF}{\Ch(\FF)}
\newcommand{\ChainF}{\Chain(\FF)}
\newcommand{\cChF}{\cCh(\FF)}
\newcommand{\dgaF}{\mathsf{dgAlg}(\FF)}
\newcommand{\ssh}{\mathrm{sh}}
\newcommand{\shpr}[2]{#1 \star_{\ssh} #2}
\renewcommand\footnotemark{}
\renewcommand\footnotemark{}
\newcommand{\cupp}{\smallsmile}
\newcommand{\J}{\mathfrak{I}}
\newcommand{\Tc}{\bar{T}^c}
\newcommand{\sainf}{A[1]_{\infty}}
\newcommand{\csainf}{\widehat{A}[1]_{\infty}}
\newcommand{\cainf}{\csainf}
\DeclareMathOperator{\strcAs}{\widehat{\mathsf{As}}\lbrack 1\rbrack^{\mathrm{str}}_\infty}
\newcommand{\ctan}{\tan}
\newcommand{\sN}{\mathcal{N}}
\newcommand{\sNb}{\mathcal{N}_\bl}
\newcommand{\sMCb}{\mathcal{MC}_\bl}
\newcommand{\qm}{\circledast}
\newcommand{\sph}{\mathrm{sph}}
\newcommand{\sphZ}{Z^{\sph}}
\newcommand{\Ord}{\boldsymbol{\Delta}}
\newcommand{\qinv}[1]{\check{#1}}
\newcommand{\unit}{\boldsymbol{1}}
\newcommand{\mult}{\boldsymbol{\cdotp}}
\renewcommand{\S}{Sec.\,}
\newcommand{\df}[1]{\textbf{\textit{#1}}}
\newcommand{\sOmb}{\Om^\ast(\Del^{\bl})}
\newcommand{\sOm}[1]{\Om^\ast(\Del^{#1})}
\newcommand{\Lie}{\mathrm{Lie}}
\newcommand{\sinf}{L[1]_{\infty}}
\renewcommand{\cinf}{\widehat{L}[1]_{\infty}}
\newcommand{\sMCL}[1]{\sMC_{\Lie \, #1}}
\newcommand{\sMCLb}{\sMCL{\bl}}
\newcommand{\sMCA}[1]{\sMC_{\Ass \, #1}}
\newcommand{\sMCAb}{\sMCA{\bl}}
\title{\bf{On the Goldman-Millson theorem for
$A_\infty$-algebras in arbitrary characteristic}}
\author{Alex Milham and Christopher L.\  Rogers}
\date{}
\begin{document}

\maketitle


\begin{abstract}
Complete filtered $A_\infty$-algebras model certain deformation problems in the noncommutative setting. The formal deformation theory of a group representation is a classical example. With such applications in mind, we provide the $A_\infty$ analogs of several key theorems from the Maurer-Cartan theory for $L_\infty$-algebras. In contrast with the $L_\infty$ case, our results hold over a field of arbitrary characteristic. We first leverage some abstract homotopical algebra to give a concise proof of the $A_\infty$-Goldman-Millson theorem: The nerve functor, which assigns a simplicial set $\sNb(A)$ to an $A_\infty$-algebra $A$, sends filtered quasi-isomorphisms to homotopy equivalences. We then characterize the homotopy groups of $\sNb(A)$ in terms of the cohomology algebra $H(A)$, and its group of quasi-invertible elements. Finally, we return to the characteristic zero case and
show that the nerve of $A$ is homotopy equivalent to the simplicial Maurer-Cartan set of its commutator $L_\infty$-algebra. This answers a question posed by N.\ de Kleijn and F.\ Wierstra in \cite{DW}. 
\end{abstract}

\setcounter{tocdepth}{1}
\tableofcontents

\newpage
\newcommand{\bel}{\ba{\el}}
\newcommand{\bm}{\ba{m}}
\section{Introduction} \label{sec:intro} 
Just as a commutative algebra has a spectrum, one can functorially assign to a commutative differential graded (dg) algebra its ``simplicial spectrum''. This construction is a key ingredient in D.\ Sullivan's approach \cite{Sullivan} to rational homotopy theory. It also provides a conduit between the (non-abelian) homological algebra of commutative dg algebras, and the homotopy theory of simplicial sets \cite{BG:1976}. In the same spirit, this paper is about constructing simplicial sets from pronilpotent noncommutative dg associative algebras, or more generally, $A_\infty$-algebras.
In order to establish the context and motivation for our results, we first need to recall a well-known story about $L_\infty$-algebras and deformation theory.
 
\paragraph{Complete $L_\infty$-algebras} $L_\infty$-algebras are the central objects in the Maurer-Cartan approach to formal deformation problems over a field of characteristic 0. See, for example, \cite{Man}, \cite{Pridham}, and the expository work \cite{Markl-book} for the precise details behind this statement. Such algebras are homotopical analogues of differential graded (dg) Lie algebras. To be more precise, an $L_\infty$-algebra $L=(L,d,\bel)$ is a cochain complex $(L,d)$ equipped with
an infinite sequence $\bel:=\{\el_2, \el_3,  \cdots\}$ of graded skew-symmetric ``brackets'' $\el_{k} \maps \Lambda^k L \to L$ which satisfy an infinite number of Jacobi-like identities. One recovers the definition of a dg Lie algebra $(L,d,[\cdot,\cdot]:=\el_2)$ if $\el_k=0$ for all $k \geq 3$. 

Within the context of formal deformation theory, the $L_\infty$-algebras involved are {\it complete} i.e.,\ they come equipped with a complete descending filtration $L = \cF_1L \supseteq \cF_2L \supseteq \cdots$ which is compatible with the differential and brackets in the obvious way. The completeness allows one to define the {\it Maurer-Cartan set} $\MC_\Lie(L)$, which consists of all degree 1 elements $x \in L^1$ satisfying the equation 
\[
dx + \sum_{k \geq 2}^\infty (-1)^k\frac{1}{k!}\, \el_k(x^{\wedge k})=0. 
\]
Saying that $L$ models certain 
types of deformations of a mathematical object means that $\MC_\Lie(L)$ is 
in bijection with the set of such deformations. The ``coarse moduli space'' of the deformation problem then corresponds to a quotient of $\MC_\Lie(L)$ by an equivalence relation. This quotient is equal to the set of path connected components $\pi_0(\sMCLb(L))$ of the  {\it simplicial Maurer-Cartan set} $\sMCLb(L):=\MC_\Lie(L \ctensor \sOmb)$. Here,
$L \ctensor \sOmb$ is the simplicial $L_\infty$-algebra obtained by taking the completed tensor product of $L$ with the simplicial dg algebra $\sOmb$ of polynomial de Rham forms. This construction relies crucially on the fact that the multiplication in $\sOmb$ is graded commutative.

When $L$ is a complete dg Lie algebra, the path connected components of $\sMCLb(L)$ coincide with the ``gauge equivalence classes'' of Maurer-Cartan elements. Indeed, $\sMCLb(L)$ can be understood as a large generalization of the {\it  Deligne groupoid} \cite[Sec.\ 2]{GM} associated to a nilpotent dg Lie algebra.
In \cite{GM}, W.\ Goldman and J.\ Millson proved the following theorem:
\begin{GMThm*}[Thm.\ 2.4; \cite{GM}]
 If $f \maps (L,d,[\cdot,\cdot]) \to (L',d',[\cdot,\cdot]')$ is a morphism between
 nilpotent dg Lie algebras concentrated in non-negative degrees, and the underlying morphism $f \maps (L,d) \to (L',d')$ of cochain complexes is a quasi-isomorphism, then $f$ induces a categorical equivalence between their corresponding Deligne groupoids. 
\end{GMThm*}

The Goldman-Millson Theorem is a foundational result in the Maurer-Cartan approach to formal deformation theory.
Building on the work of E.\ Getzler \cite{Getzler}, and V.\ Hinich \cite{Hinich:1997}, a generalization of this theorem to complete $L_\infty$-algebras was given by V.\ Dolgushev and the second author in \cite{DR}. To explain it, we first recall  that an  {\it $\infty$-morphism} $\ba{f} \maps (L,d,\bel) \to (L',d',\bel')$ between complete $L_\infty$-algebras is
an infinite sequence $\ba{f}:=\{f_1, f_2,  \cdots\}$ of graded filtration-preserving skew-symmetric linear maps $f_{k} \maps \Lambda^k L \to L'$ which satisfy an infinite number of identities that  encode the compatibility of the $f_k$ with the brackets. In particular, the identities imply that $f_1 \maps (L,d) \to (L',d')$ is a morphism of filtered cochain complexes. If the restriction $f_1 \vert_{\cF_nL}$ of $f_1$ to each piece of the filtration $(\cF_{n}L,d\vert_{\cF_nL})$ is a quasi-isomorphism of cochain complexes, then $f$ is called a {\it weak equivalence}.

\begin{GMThmLinf*}[Thm.\ 1.1; \cite{DR}] If $\ba{f} \maps 
(L,d,\bel) \to (L',d',\bel')$ is a weak equivalence between complete $L_\infty$-algebras, then $\ba{f}$ induces a homotopy equivalence \\$\sMCLb(\ba{f}) \maps \sMCLb(L) \weq \sMCLb(L')$
between their corresponding simplicial Maurer-Cartan sets. 
\end{GMThmLinf*}

\paragraph{Complete $A_\infty$-algebras}
We now turn to the ``associative'' analogue of the above story, which is where the results in this paper will reside. $A_\infty$-algebras were invented by J.\ Stasheff \cite{Stasheff}; the exposition \cite{Keller} by B.\ Keller is a standard introductory reference. Here it suffices for us to 
recall that an $A_\infty$-algebra $A:=(A,d,\bm)$ is a cochain complex $(A,d)$ equipped an infinite sequence $\bm:=\{m_2, m_3,  \cdots\}$ of graded ``products'' $m_{k} \maps A^{\tensor k} \to A$ which satisfy an infinite number of associative law-type identities. 
There are no symmetry constraints on the $m_k$'s.
One recovers the definition of a non-unital dg associative algebra $(A,d,\mu:=m_2)$ if $m_k=0$ for all $k \geq 3$. Again, we say $A$ is {\it complete} if the graded vector space $A$ is equipped with a complete descending filtration $A = \cF_1A \supseteq \cF_2A \supseteq \cdots$ which is compatible with the differential and products. Just as in the $L_\infty$-case, an  {\it $\infty$-morphism} $\ba{f} \maps (A,d,\bm) \to (A',d',\bm')$ between complete $A_\infty$-algebras is
an infinite sequence $\ba{f}:=\{f_1, f_2,  \cdots\}$ of graded filtration-preserving multilinear maps $f_{k} \maps A^{\tensor k} \to A'$ which satisfy an infinite number of identities. Note that there are no symmetry constraints on the maps $f_k$.

The {\it Maurer-Cartan set} $\MC_\Ass(A)$ consists of all degree 1 elements $a \in A^1$ satisfying the equation 
\[
da + \sum_{k \geq 2}^\infty (-1)^k m_k(a^{\tensor k})=0. 
\]
Note that the Maurer-Cartan equation for a complete $A_\infty$-algebra is well defined over a field $\FF$ of arbitrary characteristic, in contrast to the $L_\infty$-case. Furthermore, 
since there are no symmetry constraints on the products $m_k$, we can tensor $A$ with a noncommutative dg algebra -- or, more generally, a simplicial noncommutative dg algebra --  and obtain a new complete $A_\infty$-algebra. 

The Maurer-Cartan theory of $A_\infty$-algebras has been developed throughout many parts of the literature in various levels of generality. For example, the framework has been applied to: 
\begin{itemize}
\item homological mirror symmetry by M.\ Kontsevich \cite{Kont} and K.\ Fukaya \cite{Fuk};  
\item symplectic geometry by K.\ Fukaya, Y-G.\ Oh, H.\ Ohta, and K.\ Ono \cite{FOOO}; 
\item higher analogues of the Riemann-Hilbert correspondence by J.\ Chuang, J.\ Holstein, and A.\ Lazarev \cite{CHL}; 
\item the theory of higher geometric stacks by K.\ Behrend and E.\ Getzler \cite{BG}; 
\item Galois deformations by C.\ Wang-Erickson \cite{WE};    
\item $E_1$-deformation problems within the context of non-symmetric operads, by N.\ de Kleijn and F.\ Wierstra \cite{DW}.
\end{itemize}
We are not claiming that the above list is exhaustive. The last two works in particular provide much of the motivation for the present paper.

Following the terminology of J.\ Lurie \cite{HA}, and   Behrend--Getzler \cite{BG}, we define the {\it nerve} of a complete $A_\infty$-algebra $(A,d,\bm)$ to be the simplicial set
$\sNb(A):=\MC_{\Ass}(A \tensor N^\ast(\Del^\bl))$. Here, $N^\ast(\Del^\bl)$ is the simplicial dg algebra of normalized cochains. Note that different authors use different names for $\sNb(A)$, e.g.\ de Kleijn and Wierstra \cite{DW} call $\sNb(A)$ the ``simplicial Maurer-Cartan set'' of $A$. 

Indeed, the nerve is the $A_\infty$-analogue of the simplical Maurer-Cartan set and upgrades to a functor
\[ 
\sNb(-) \maps \cat{\wh{A}_{\infty}Alg} \to \sSet
\] 
from the category of complete $A_\infty$-algebras and $\infty$-morphisms to simplicial sets. In \cite{DW}, de Kleijn and Wierstra prove that $\sNb(A)$ is not just a simplical set, but a {\it Kan simplicial set} or {\it $\infty$-groupoid}. This is what one might expect, since the simplicial Maurer-Cartan set of an $L_\infty$-algebra has the same property \cite[Prop.\ 4.7]{Getzler}. 
On the other hand,  if $A$ is a finite-dimensional dg associative algebra, then $\sNb(A)$ 
is a simplicial object in the category of schemes over $\FF$, but it is  
not necessarily an $\infty$-groupoid. In fact, Behrend and Getzler \cite{BG} prove that, in this case, $\sNb(A)$ is only a {\it quasi-category} or {\it $(\infty,1)$-category} internal to schemes. 
In contrast with the pronilpotent case, this means that a 1-simplex in $\sNb(A)$ need not be homotopy invertible\footnote{The term ``quasi-invertible'' is used in \cite{BG} instead of ``homotopy invertible''. In the present paper, we use the term ``quasi-invertible'' in the sense of noncommutative ring theory. See Def.\ \ref{def:qi}.}. Furthermore, \cite[Thm.\ 7.5]{BG} implies that the maximal $\infty$-groupoid contained in $\sNb(A)$ is an open sub-simplicial scheme, i.e.\  homotopy invertibility is an ``open condition''. 
The results of Behrend and Getzler, when combined with those of de Kleijn and Wierstra, 
suggest an analogy to a more familiar geometric scenario: If $M$ is a monoid object in, say, finite type smooth schemes over $\FF$, then the inclusion $M^\times \to M$ of the group of invertible elements is an open immersion. Hence, if a closed point in $M$ is invertible, then its formal neighborhood is necessarily contained in the image of $M^\times \to M$.

\subsection*{Main results}   
In this paper, we further develop the Maurer-Cartan theory for complete $A_\infty$-algebras along the lines initiated by de Kleijn and Wierstra. In particular, our Theorems 1 and 3 below answer two questions posed by these authors in their work \cite{DW}. 
As a pilot application, in Sec.\ \ref{sec:def-app} we reinterpret  within our framework certain results of  C.\ Wang-Erickson \cite{WE} on the formal deformation theory of group representations. 
In future work, we plan to use the tools developed here to study noncommutative formality problems in positive characteristic. Such problems arise, for example, in Galois cohomology \cite{HW,Gal}, and in the study of configuration spaces \cite{Salvatore}.

We summarize our main results here in terms of $A_\infty$-algebras, even though 
we work with  ``shifted'' $A_\infty$-algebras throughout the rest of the paper. This makes certain calculations more straightforward; the differences are negligible, since we work with unbounded complexes. First, we develop some abstract homotopy theory in Sec.\ \ref{sec:cat-props} for the category $\cat{\wh{A}_{\infty}Alg}$, building on the second author's work \cite{CR}. (Some of the results in this section are the filtered versions of statements proved by K.\ Lefevre-Hasegawa in \cite{LH}.) In analogy with the complete $L_\infty$ case, we define in Def.\ \ref{def:cAsmap} an $\infty$-morphism 
$\ba{f} \maps (A,d,\bm) \to (A',d',\bm')$ between complete $A_\infty$-algebras  
to be a {\it weak equivalence} if the restriction $f_1 \vert_{\cF_nA}$ of $f_1$ to each piece of the filtration $(\cF_{n}A,d \vert_{\cF_nA})$ is a quasi-isomorphism of cochain complexes. 
This homotopical machinery allows us to give a concise proof of an $A_\infty$ analogue of the Goldman-Millson Theorem:

\begin{theorem1}[Thm \ref{thm:GM}]\label{thm:1}
If $\ba{f} \maps (A,d,\bm) \to (A',d',\bm')$ is a weak equivalence between complete $A_\infty$-algebras over a field $\FF$, then $\ba{f}$ induces a homotopy equivalence $\sNb(\ba{f}) \maps \sNb(A) \weq \sNb(A')$.
\end{theorem1}
\noindent Indeed, our proof of the above theorem is completely different than the one given by
V.\ Dolgushev and the second author in \cite{DR} for the $L_\infty$ Goldman-Millson Theorem. 

In \cite{Berglund}, A.\ Berglund characterizes the homotopy groups of $\sMCb(L)$ for a complete $L_\infty$-algebra $L$, in terms of the cohomology groups $H^{ < 0}(L)$ and the pronilpotent Lie algebra $H^0(L)$.
We prove the analogous result for $A_\infty$-algebras in Sec.\ \ref{sec:hmtpy}:
\begin{theorem2}[Thm.\ \ref{thm:pin}; Thm.\ \ref{thm:pi1}]\label{thm:2}
Let $A$ be a complete $A_\infty$-algebra over a field $\FF$. Then there are natural isomorphisms of abelian groups $\pi_n\bigl( \sNb(A),0) \cong  H^{1-n}(A)$, for all $n \geq 2$, and a natural isomorphism of groups
\[
\pi_1\bigl(\sNb(A),0) \cong H^0(A)^{\qm}
\]
where $H^0(A)^{\qm}$ denotes the group of quasi-invertible elements (Def.\ \ref{def:qi}) of the non-unital pronilpotent algebra $H^0(A)$.
\end{theorem2}

For our last main result, we set $\FF=\kk$ to be a field of characteristic zero.
Recall the classical fact that the commutator bracket gives any associative $\kk$-algebra the structure of a Lie algebra. T.\ Lada and M.\ Markl  showed in \cite{LM} that this generalizes to an analogous relationship between $A_\infty$-algebras and $L_\infty$-algebras: To any $A_\infty$-algebra $A$ one can construct an $L_\infty$-algebra $\cL(A)$ by skew-symmetrizing the $A_\infty$-products.
The construction is functorial with respect to ``strict'' $\infty$-morphisms of $A_\infty$-algebras. 
It is then natural to expect that there is a relationship between the nerve of $A$ and the simplicial Maurer-Cartan set of its commutator $L_\infty$-algebra $\cL(A)$.

\begin{theorem3}[Thm.\ \ref{thm:N-vs-MC}] \label{thm:3}
Let $A$ be a complete $A_\infty$-algebra over a field $\kk$ of characteristic zero, and let $\cL(A)$ be its commutator $L_\infty$-algebra. Then there is a span of weak homotopy equivalences of simplicial sets
\[
\sNb(A) \xleftarrow{\sim} S \xto{\sim} \sMCLb\bigl( \cL(A) \bigr)
\]
that is natural with respect to strict $\infty$-morphisms between $A_\infty$-algebras.
\end{theorem3}

\subsection*{Acknowledgments}
We thank Carl Wang-Erickson for explaining his work \cite{WE} to us, and calling our attention to the work of M.\ Hopkins and K.\ Wickelgren \cite{HW}. We thank Jesse Wolfson for conversations
about the work of K. Behrend and E.\ Getzler \cite{BG}. 
Finally, we are grateful for the careful reading of this manuscript by the anonymous referee, and we thank them for their feedback. The second author acknowledges support by a grant from the Simons Foundation/SFARI (585631,CR).

\section{Preliminaries} \label{sec:notation}
Throughout $\FF$ denotes a field of arbitrary characteristic, and $\kk$ is a field 
of characteristic $0$. In Sec.\ \ref{sec:char0}, we will set $\FF=\kk$. 
All graded objects are $\Z$-graded and unbounded, unless stated otherwise. 
We use cohomological conventions for differential graded (dg) objects.
The categories of graded vector spaces and cochain complexes over $\FF$ are denoted by $\VectF$ and $\ChF$, respectively. 
For $V$ a graded vector space, $sV:=V[-1]$ denotes the suspension of $V$, defined by $(sV)^i := V^{i - 1}$.  Similarly, $s^{-1}V:=V[1]$ is the desuspension of $V$ defined by $(s^{-1}V)^i := V^{i + 1}$.

\subsection{Notation for simplicial objects} \label{sec:simp-note}
We denote by $\Ord$ the category of ordinals $[0], [1], [2],$ etc., and for each $i=0,\ldots,n$, we let $d^i \maps [n-1] \to [n]$, and $s^{i} \maps [n+1] \to [n]$ denote the coface and codegeneracy maps in $\Ord$. Given a category $\catC$, the notation $\sC$ is reserved for the category of simplicial objects in $\catC$, i.e.\ the category of contravariant functors from $\Ord$ to $\catC$. 
We denote by $\Del^n:=\hom_{\Ord}(-,[n]) \in \sSet$ the standard simplicial $n$-simplex. The $i$th face, for example, of a $k$-simplex $\si \maps \Del^k \to \Del^n$ in $\Del^n$ is the $k-1$ 
simplex $d_i \si:= \si \cc d^i \maps \Del^{k-1} \to \Del^n$. The remaining face and degeneracy maps for $\Del^n$ are defined similarly.

The collection $\Del^\bl = \{\Del^n\}_{n \geq 0}$ forms a cosimplicial object in $\sSet$, and we write its coface and codegeneracy maps using the same notation as in the category $\Ord$, i.e.,   $d^i \maps \Del^{\bl-1} \to \Del^{\bl}$ and $s^j \maps \Del^{\bl +1} \to \Del^{\bl}$, respectively.
So, for example, if $\ta \maps \Del^{k} \to \Del^n$ is a $k$-simplex of $\Del^n$, then we denote its embedding into the $i$th face of $\Del^{n+1}$ as the map $d^i \ta:=  d^i \cc \ta \maps \Del^k \to \Del^{n+1}$.

\subsection{Normalized cochains} \label{sec:N}
Recall that the normalized chain functor $N_\ast \maps \sSet \to \ChainF$ sends a simplicial set $X$ to the normalized chain complex \cite{EZ} of the simplicial vector space $\FF X$ generated by $X$. The diagonal $X \to X \times X$  and constant map $X \to \ast$ give $\FF X$ the structure of a simplicial counital coassociative coalgebra. Together with the Alexander-Whitney map, this gives the chain complex $N_\ast(X)$ the structure of a counital dg coassociative coalgebra. The \df{normalized cochain complex} $N^\ast(X):= \Hom_\FF(N_\ast(X), \FF) \in \cChain(\FF)$ of a simplicial set $X$ is the degree-wise linear dual of $N_\ast(X)$. Dualizing the coalgebra structure on $N_\ast(X)$
provides $N^\ast(X)$ with the structure of a finite-dimensional unital dg associative algebra over $\FF$. We recall explicit formulas for this structure below in Sec.\ \ref{sec:basis}.

Moreover, the collection $N^\ast(\Del^\bl) := \{N^\ast(\Del^n)\}_{n \geq 0}$ forms a simplicial unital dg algebra. The simplicial structure is induced from the cosimplicial structure on $\Del^\bl$ described in Sec.\ \ref{sec:simp-note}. Throughout the paper, we let  $d_i \maps N^\ast(\Del^{\bl +1}) \to N^\ast(\Del^{\bl})$ and $s_j \maps N^{\ast}(\Del^{\bl}) \to N^{\ast}(\Del^{\bl +1})$ denote the induced face and degeneracy algebra homomorphisms, respectively.

\subsubsection{A basis for $N^\ast(\Del^\bl)$} \label{sec:basis}
We recall from \cite[\S 5]{DW} a convenient basis for  normalized cochains which we will use throughout the paper. The set of non-degenerate $k$-simplicies in $\Del^n$, which we denote by $\cB_{n,k}:=\{(0 \leq i_0 < i_1 <  \cdots < i_k \leq n) \in \Del^n\}$, forms a basis for the vector space $N_k(\Del^n)$. Hence $N^k(\Delta^n)$ is the span of the dual basis
\begin{equation} \label{eq:dual}
N^k(\Del^n) = \spann_\FF \bigl \{ \vphi_{\si} \st   \si \in \cB_{n,k}\}.
\end{equation}
In particular, we denote by
\[
\vphi_{[n]} \maps N_n(\Del^n) \to \FF
\]
the unique dual basis element of top degree $n$ which sends the non-degenerate $n$-simplex in $\Del^n$ to $1 \in \FF$. We also reserve the symbols
\[
\vph_{0}, \vph_{1}, \ldots, \vph_{n} \in N^0(\Del^n) 
\]
to denote the basis elements in degree zero, where $\vph_i$ takes the value of $1$ on the $i$th vertex of $\Del^n$. 

\subsubsection{Formulas for simplicial unital dga structure on $N^\ast(\Del^\bl)$} \label{sec:norm-form}
It will be useful to have explicit formulas for the unit, differential, cup product, and face maps of $N^\ast(\Del^\bl)$ in terms of the above basis. First, the unit is the morphism of simplicial algebras
\[
\FF \xto{\unit_\bl} N^\ast(\Del^\bl), \quad 1_\FF \mapsto \unit_n:= \sum_{i=0}^n \vph_i.
\]    
Here $\FF$ denotes, by abuse of notation, the constant simplicial dg algebra which, in each dimension, equals the ground field $\FF$ concentrated in degree 0. 

\begin{remark}\label{rmk:poincare}
The simplicial dga $N^\ast(\Del^\bl)$ satisfies the \textit{Poincar\'{e} Lemma}. That is, for each $n \geq 0$, the unit map $\unit_n$  induces a quasi-isomorphism of dg associative algebras.
\end{remark}

Next, the differential $\del \maps  N^k(\Delta^n) \to N^{k+1}(\Delta^n)$ can be written as
\begin{equation}\label{eq:diff}
\del (\vphi_\si) = \sum_{\ta \in \cB(n,k+1)} \sum_{i=0}^{k+1}  (-1)^{k+1+i} c_{\ta,i} \,  \vphi_\ta
\end{equation}
where $c_{\ta,i} = 1$ if $\si= d_i \ta$, and is zero otherwise. Also, given $\phi \in N^k(\Delta^n)$ and $\psi \in N^\el(\Delta^n)$, 
the multiplication or ``cup product'' $\ph \cupp \psi \in N^{k+\el}(\Del^n)$ is the linear map whose values on elements of $\cB_{n, k+ \el}$ are:
\begin{equation}\label{eq:cup}
\bigl( \phi \cupp \psi \bigr)(i_0 < i_1<  \cdots < i_{k+\el-1} < i_{k+\el} ) = \phi(i_0 < \cdots < i_k) \cdot \psi(i_k < \cdots i_{k+\el}).
\end{equation}
Let $j=0,\ldots,n$, and let $\vph_\si \in N^k(\Del^n)$ be a basis element. The $j$th-face $d_j \vph_\si \in N^{k}(\Del^{n-1})$  of $\vph_\si$ is:
\begin{equation} \label{eq:face}
d_{j}\vphi_\si = \sum_{\substack{ \ta \in \cB_{n-1,k} \\ d^{j} \ta = \si}} \vph_\ta.
\end{equation}
Obviously, the sum on the right hand side of Eq.\ \ref{eq:face} consists of at most one term, since the coface map $d^j$ is injective.

Finally, we note here for future reference that for all $n \geq 0$, the above formulas imply that the top degree cochain $\vph_{[n]} \in N^{n}(\Del^n)$ satisfies the equalities
\begin{equation} \label{eq:top}
\del \vph_{[n]}  = 0 = \vph_{[n]} \cupp \vph_{[n]}. \qquad d_j \vph_{[n]} =0 \quad \forall j=0,\ldots,n.
\end{equation}

\subsection{Filtered objects} \label{sec:filt}
Our main references for filtered dg objects and their completions are \cite[\S4.1; \S4.2]{CR}, 
\cite[\S7.3]{Fresse:BookI}, and \cite[Ch. 2]{DSV:2018}. Denote by $\fVectF$ the category of filtered vector spaces, i.e.\ vector spaces $V \in \VectF$ equipped with a descending filtration $$V=\cF_1V \supseteq \cF_2V \supseteq \cdots$$ 
beginning in filtration \und{degree $1$}. Morphisms in $\fVectF$ are linear maps which are filtration preserving. The category of complete filtered vector spaces is the full subcategory $\cVectF \sse \fVectF$ whose objects satisfy $V= \plim \qf{V}{n}$. Analogously, $\cChF$ denotes the category of $\Z$-graded complete filtered cochain complexes whose morphisms are filtration preserving chain maps.

\subsubsection{Morphisms} \label{sec:fmorph}
We recall a characterization of split epimorphisms in $\fVect$, that we will use throughout
Sec.\ \ref{sec:cat-props}.
\begin{lemma}[Lem.\ 4.3\cite{CR}]\label{lem:fepi}
If $f \maps V \to V'$ is a morphism in $\cVect$ such that the restriction
$\cF_nf \maps \cF_nV \to \cF_nV'$ is surjective  for all $n \geq 1$, then there exists a morphism $\si \maps V' \to V$ in $\cVect$ such that $f \si = \id_{V'}$.
\end{lemma}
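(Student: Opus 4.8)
The plan is to construct $\si$ explicitly from a suitable ``filtered basis'' of $V'$. First I would record the standard fact that a complete filtered vector space over $\FF$ is the completion of its associated graded, in a concrete form: for each $n \geq 1$ choose a basis $B_n$ of the graded quotient $\Gr_n V' := \cF_n V'/\cF_{n+1}V'$ and lift it through the quotient map to a subset of $\cF_n V'$, still denoted $B_n$; set $B := \coprod_{n \geq 1} B_n$ and $B_{<m} := \coprod_{n < m} B_n$. An easy induction on $N$ shows that the images of $B_{<N+1}$ form a basis of $V'/\cF_{N+1}V'$, so, passing to the limit $V' = \plim_N \bigl(V'/\cF_{N+1}V'\bigr)$, every $x \in V'$ acquires a \emph{unique} expansion $x = \sum_{b \in B} c_b\, b$ with $c_b \in \FF$, subject only to the requirement that $\{\, b \in B_{<N} : c_b \neq 0 \,\}$ be finite for each $N$ (which is exactly what makes the sum converge $\cF_\bullet$-adically); moreover $x \in \cF_m V'$ precisely when $c_b = 0$ for all $b \in B_{<m}$.

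Next I would lift the basis using the hypothesis. For $b \in B_n \subseteq \cF_n V'$, surjectivity of $\cF_n f \maps \cF_n V \to \cF_n V'$ provides some $\ti b \in \cF_n V$ with $f(\ti b) = b$; I then define $\si \maps V' \to V$ by $\si\bigl( \sum_b c_b\, b \bigr) := \sum_b c_b\, \ti b$. This series converges in $V$: for each $m$, every term with $b \in B_n$ and $n \geq m$ lies in $\cF_n V \sse \cF_m V$, while only finitely many of the remaining terms are nonzero; since $V$ is complete the series has a well-defined value, whose class in $V/\cF_m V$ is the finite sum $\sum_{b \in B_{<m}} c_b\, \ti b$. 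Uniqueness of expansions makes $\si$ a well-defined linear map, and the characterization of $\cF_m V'$ recorded above shows immediately that $x \in \cF_m V'$ implies $\si(x) \in \cF_m V$, so $\si$ is a morphism of $\cVect$.

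Finally, for the section identity: being filtration preserving and linear, $f$ is continuous for the $\cF_\bullet$-adic topologies, so for every $m$ one has $f(\si(x)) \equiv \sum_{b \in B_{<m}} c_b\, f(\ti b) = \sum_{b \in B_{<m}} c_b\, b \equiv x \pmod{\cF_m V'}$; since $V'$ is separated ($\bigcap_m \cF_m V' = 0$), this forces $f \si = \id_{V'}$.

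The one genuinely substantive ingredient is the first step: that lifting bases of the graded quotients $\Gr_n V'$ arbitrarily to $\cF_n V'$ yields a ``topological basis'' of the complete filtered space $V'$. This is where completeness of $V'$ and the existence of vector-space bases over $\FF$ are used; the remainder is formal bookkeeping. The same argument shows, more generally, that every levelwise surjective morphism in $\cVect$ is a split epimorphism, and that the associated-graded functor $\cVect \to \prod_{n \geq 1} \VectF$ admits a section.
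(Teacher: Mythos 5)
Your argument is correct: choosing bases of the graded quotients $\Gr_n V'$, lifting them to a topological basis of the complete space $V'$, and then lifting each basis vector through the surjection $\cF_nf$ is exactly the standard way to split a levelwise-surjective map of complete filtered vector spaces over a field, and your verification that $\si$ is filtration-preserving and that $f\si=\id_{V'}$ (using separatedness) is airtight. The present paper does not reprove this lemma---it cites \cite{CR}---and the proof there proceeds by the same mechanism, namely splitting on the associated graded and assembling the section by completeness, so your route is essentially the same.
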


\subsubsection{Symmetric monoidal structures} \label{sec:smc} 
Recall that the usual tensor product of graded vector spaces lifts to a symmetric monoidal 
structure on $\fVectF$ \cite[Lem.\ 2.5]{DSV:2018}. Indeed, if $V,W \in \fVectF$, then the canonical filtration \cite[\S7.3.9]{Fresse:BookI} on $V \tensor W$ is
\begin{equation} \label{eq:filt-tensor}
\cF_n(V \tensor W) = \sum_{\el =1}^n \cF_{\el}V \tensor \cF_{n-\el+1}W \qquad \forall n \geq 1.
\end{equation}
As a special case, if $W \in \VectF$ is a graded vector space equipped with the discrete filtration, then the filtration above on $V \tensor W$ reduces to $\cF_n(V \tensor W) = \cF_nV \tensor W.$
Hence, the ``discrete functor'' $\Disc \maps \VectF \to \fVectF$ is strongly symmetric monoidal \cite[Prop.\ 2.8]{DSV:2018}.

If $ V, W \in \cVect \sse \fVect$ are complete, then the completed tensor product of $V, W \in \cVect$ is
\begin{equation} \label{eq:comp-tensor}
V \ctensor W := \plim \bigl ( V \tensor W/ \cF_n(V \tensor W) \bigr).
\end{equation}
Again, as a special case, if $W \in \Vect$ is given the discrete filtration, then there is a canonical identification
\begin{equation} \label{eq:ctensor-disc}
V \ctensor W = \plim (V/\cF_nV) \tensor W. 
\end{equation}
The complete tensor product gives $\cVectF$ the structure of a symmetric monoidal category \cite[\S7.3.12]{Fresse:BookI}, and the completion functor $(-)\wh{~~} \maps \fVect \to \cVect$ 
is strongly symmetric monoidal. Indeed, given filtered vector spaces $V, W \in \fVect$,  there is a natural isomorphism \cite[Prop.\ 2.21]{DSV:2018} \cite[Prop.\ 7.3.11]{Fresse:BookI} in $\cVectF$:
\begin{equation} \label{eq:ctensor-iso}
(V \tensor W) \wh{~~} \xto{\cong} \wh{V} \ctensor \wh{W}.
\end{equation}   
Furthermore, the composition of the completion with the discrete functor $\Disc(-)$ 
\begin{equation} \label{eq:comp-disc}
\wh{\Disc} \maps \Vect \to \cVect
\end{equation}
is also a strongly symmetric monoidal functor. \cite[Cor.\ 2.22]{DSV:2018}.

Clearly all of the above results remain valid if we replace the symmetric monoidal categories $\VectF$ and $\cVectF$ with $\ChF$ and $\cChF$, respectively.

\subsection{Homotopy theory in $\cCh(\FF)$} \label{sec:cCh}
We recall facts concerning homotopy theory in the category $\cCh(\FF)$. We follow \cite[\S4.3]{CR} and the references given in op.\ cit. A morphism $f \maps (V,d) \to (V',d')$ is a \df{weak equivalence}/\df{fibration} if the restriction of $f$ to the subcomplexes
\[
\cF_n(f):= f \vert_{\cF_nV} \maps (\cF_nV,d) \to (\cF_nV',d') \qquad \forall n \geq 1
\] 
is a quasi-isomorphism/epimorphism of cochain complexes. We say $f$ is an \df{acyclic fibration} if $f$ is both a weak equivalence and a fibration. Note that weak equivalences in $\cChF$ satisfy the \df{2 of 3 property}: If $f \maps (V,d) \to (V',d')$ and 
$g \maps (V',d') \to (V'',d'')$ are composable morphisms in $\cChF$, and any two of the morphisms
\[
f, \quad g, \quad g\cc f
\]
are weak equivalences, then so is the third.

The proof of the following proposition is straightforward, but the result is crucial. It shows that every acyclic fibration in $\cChF$ admits a right inverse in $\cChF$. This right inverse is necessarily a weak equivalence by the 2 of 3 property. 
\begin{proposition}[Prop.\ 4.13, \cite{CR}] \label{prop:acyc}
Let $f \maps (V,d) \afib (V',d')$ be an acyclic fibration in $\cChF$. Then
\begin{enumerate}
\item There exists a morphism $\tau \maps (V',d') \to (V,d)$ in $\cChF$  such that $f\tau = \id_{V'}$.   
\item There exists a degree $-1$ linear map $h \maps V \to \ker f$ such that $h(\cF_nV) \sse \ker f \cap \cF_nV$ for all $n \geq 1$, and such that $\id - \tau f = dh + hd$.
\end{enumerate} 
\end{proposition}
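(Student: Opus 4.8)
The statement asserts that an acyclic fibration $f \maps (V,d) \afib (V',d')$ in $\cChF$ admits a right inverse $\tau$ in $\cChF$, together with a filtration-compatible contracting homotopy $h \maps V \to \ker f$ witnessing $\id - \tau f = dh + hd$. The first thing I would do is reduce part (1) to a known statement: since $f$ is in particular a fibration, each $\cF_nf \maps \cF_nV \to \cF_nV'$ is surjective, and since morphisms in $\cChF$ are filtration preserving and everything is complete, Lemma \ref{lem:fepi} already produces a filtration-preserving graded right inverse $\si \maps V' \to V$. The point of part (1), however, is to upgrade this to a \emph{chain map} right inverse. So I would instead argue directly on the associated graded pieces or on the successive quotients $\cF_nV/\cF_{n+1}V$, where $f$ induces a surjective quasi-isomorphism of ordinary cochain complexes; over a field such a map is an acyclic fibration in $\ChF$ and hence admits a chain-map section together with a contracting homotopy on its kernel. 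The real work is then to assemble these data compatibly up the filtration tower.

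**Key steps.** Concretely, I would work with the finite quotients $V_{(n)} := V/\cF_{n+1}V$ and $V'_{(n)} := V'/\cF_{n+1}V'$, which fit into towers $\cdots \to V_{(n)} \to V_{(n-1)} \to \cdots$ with $V = \plim V_{(n)}$ and $V' = \plim V'_{(n)}$ by completeness. Each induced map $f_{(n)} \maps V_{(n)} \to V'_{(n)}$ is a surjective quasi-isomorphism of cochain complexes of $\FF$-vector spaces (surjectivity from the fibration hypothesis together with the five lemma on the filtration quotients; quasi-isomorphism likewise, using that weak equivalences in $\cChF$ are defined on the $\cF_n$-pieces and that $H$ commutes with the relevant filtered colimits/limits in this setting). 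The kernel $K_{(n)} := \ker f_{(n)}$ is then an acyclic complex over a field, hence \emph{contractible}: there is a degree $-1$ map $h_{(n)} \maps K_{(n)} \to K_{(n)}$ with $\id_{K_{(n)}} = d h_{(n)} + h_{(n)} d$. I would then inductively construct a compatible system of chain-map sections $\tau_{(n)} \maps V'_{(n)} \to V_{(n)}$ and chain homotopies $h_{(n)} \maps V_{(n)} \to K_{(n)}$ with $\id - \tau_{(n)} f_{(n)} = d h_{(n)} + h_{(n)} d$, each refining the previous one: given $\tau_{(n-1)}$ and $h_{(n-1)}$, lift them over the surjections $V_{(n)} \twoheadrightarrow V_{(n-1)}$ using that the relevant obstruction complexes (kernels of the tower maps, which sit inside $K_{(n)}$ after passing to associated graded) are again acyclic over $\FF$, so all obstructions to lifting a section and a nullhomotopy vanish. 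Passing to the inverse limit yields $\tau = \plim \tau_{(n)} \maps V' \to V$ and $h = \plim h_{(n)} \maps V \to \ker f$; the condition $h(\cF_nV) \sse \ker f \cap \cF_nV$ is automatic because each $h_{(n)}$ is built compatibly with the filtration, and $\id - \tau f = dh + hd$ holds because it holds at every finite stage and $V$ is the limit.

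**Main obstacle.** The routine parts — that $f_{(n)}$ is a surjective quasi-isomorphism, that an acyclic complex over a field is contractible — are standard. The genuine difficulty, and the only place any real argument is needed, is the \emph{inductive lifting of the section together with its nullhomotopy through the tower}: one must lift not just a chain map but a chain map \emph{plus} the data $h_{(n-1)}$ of a homotopy, compatibly, and verify that the obstruction to doing so lives in the cohomology of an acyclic complex and therefore vanishes. The bookkeeping here (choosing the splittings of the short exact sequences $0 \to \cF_nV/\cF_{n+1}V \to V_{(n)} \to V_{(n-1)} \to 0$ as graded vector spaces, then correcting by a coboundary to make the section a chain map, then correcting the homotopy similarly) is exactly the "straightforward but crucial" content the authors allude to; I would present it as a single induction with the inductive hypothesis carrying both $\tau_{(n)}$ and $h_{(n)}$ simultaneously. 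A clean alternative, if one wants to avoid the explicit tower induction, is to invoke that $\cChF$ (or the tower category $\tow_1(\ChF)$) carries a model structure in which $f$ is an acyclic fibration and every object is cofibrant, so $f$ admits a section by lifting against $\emptyset \to V'$, and then extract $h$ from the resulting deformation retract data; but since the excerpt has only developed the homotopy theory of $\cChF$ semi-formally, I would favor the hands-on tower argument above.
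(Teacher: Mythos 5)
Your argument is correct, but note first that the paper itself gives no proof of this statement: it is quoted from \cite{CR}, where the argument is a direct one rather than a tower induction. There one observes that each $\cF_n\ker f$ is acyclic (long exact sequence of $0 \to \cF_n\ker f \to \cF_nV \to \cF_nV' \to 0$), so that $d \maps \ker f \to Z(\ker f)$ and the projection $\ker f \to \ker f/Z(\ker f)$ restrict to surjections on every $\cF_n$; Lemma \ref{lem:fepi} then splits both at once, producing a \emph{single} filtration-preserving contraction $s$ of $\ker f$ with $ds+sd=\id$. A filtered linear section $\si$ of $f$ (again Lemma \ref{lem:fepi}) is corrected to the chain section $\tau:=\si - s(d\si-\si d')$, and $h:=s(\id-\tau f)$ does the rest. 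Your route through the quotients $V/\cF_{n+1}V$ reaches the same conclusion and is sound: the $f_{(n)}$ are surjective quasi-isomorphisms by the five lemma, and the obstruction complexes at each stage are contractible over a field, so both the section and the nullhomotopy lift. Two points deserve more care than you give them. First, the obstruction complex is not the kernel $\cF_nV/\cF_{n+1}V$ of the tower map (which need not be acyclic) but its intersection with $K_{(n)}$, i.e.\ $\ker\bigl(\cF_nV/\cF_{n+1}V \to \cF_nV'/\cF_{n+1}V'\bigr)$, whose acyclicity is again a five-lemma consequence of the hypothesis on $\cF_nf$ and $\cF_{n+1}f$; your parenthetical gestures at this but should be stated as the thing actually being used. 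Second, the filtration condition on $h$ is indeed automatic, but the correct reason is that tower compatibility $\pi_n\cc h = h_{(n)}\cc\pi_n$ forces $h(\cF_{n+1}V)\sse\cF_{n+1}V$ because $\pi_n$ kills $\cF_{n+1}V$, and $h$ lands in $\ker f$ because $f\cc h$ vanishes modulo every $\cF_{n+1}V'$ and the filtration on $V'$ is Hausdorff; no filtration control \emph{within} each $V_{(n)}$ is needed. The direct argument buys brevity and a single use of the splitting lemma in place of a two-part obstruction induction; your tower method buys nothing extra for this particular statement, though it is the technique one genuinely needs in situations where no global contraction of the kernel is available.
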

 
Another needed result is that the completed tensor product in $\cChF$, as defined above, is compatible with quasi-isomorphisms between ``discrete'' cochain complexes.
\begin{lemma}[Lem.\ 4.12 \cite{CR}] \label{lem:ctensor}
Let $(V,d_V) \in \cChF$ be a complete cochain complex. If $f \maps (B,\del) \weq (B',\del')$ is a quasi-isomorphism in $\ChF$, then 
$\id_V \ctensor f \maps V \ctensor B \to V \ctensor B'$ is a weak equivalence in $\cChF$.   
\end{lemma}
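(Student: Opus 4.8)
The plan is to reduce the statement to a statement about discrete cochain complexes, where we can invoke Lemma 4.12 from \cite{CR} (i.e.\ Lemma \ref{lem:ctensor} above). Recall that the completed tensor product $V \ctensor B$ of a complete filtered cochain complex $V$ with a \emph{discretely} filtered cochain complex $B$ is, by Eq.\ \ref{eq:ctensor-disc}, canonically identified with $\plim_n (V/\cF_nV) \tensor B$. Under this identification, the induced filtration on $V \ctensor B$ is $\cF_m(V \ctensor B) = \plim_n \cF_m(V/\cF_nV) \tensor B$, where $\cF_m(V/\cF_nV)$ denotes the image of $\cF_mV$ in the quotient. So to check that $\id_V \ctensor f$ is a weak equivalence, one must show that its restriction to $\cF_m(V \ctensor B) \to \cF_m(V \ctensor B')$ is a quasi-isomorphism for every $m \geq 1$.

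First I would unwind what $\cF_m(V \ctensor B)$ looks like concretely. Since $B$ carries the discrete filtration, the canonical filtration formula Eq.\ \ref{eq:filt-tensor} on $V \tensor B$ collapses: $\cF_m(V \tensor B) = \cF_mV \tensor B$ (this is the special case noted right after Eq.\ \ref{eq:filt-tensor}). Passing to quotients and completing, $\cF_m(V\ctensor B) = \plim_n \bigl( (\cF_mV + \cF_nV)/\cF_nV \bigr) \tensor B$. Now here is the key observation: $\cF_mV$ is itself a complete filtered cochain complex, with filtration $\cF_{m+j}V$ for $j \geq 0$, and the tower $\{(\cF_mV + \cF_nV)/\cF_nV\}_n$ is cofinal with (indeed eventually equal to) the tower $\{\cF_mV/\cF_{m+k}(\cF_mV)\}_k$ computing the completion of $\cF_mV$. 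Hence $\cF_m(V \ctensor B) \cong (\cF_mV) \ctensor B$, the completed tensor product taken inside $\cChF$, and under this identification the map $\cF_m(\id_V \ctensor f)$ is exactly $\id_{\cF_mV} \ctensor f$. But $\cF_mV$ is a complete cochain complex and $f \maps B \weq B'$ is a quasi-isomorphism of discrete complexes, so Lemma \ref{lem:ctensor} (applied with $\cF_mV$ in place of $V$) tells us $\id_{\cF_mV} \ctensor f$ is a weak equivalence in $\cChF$; in particular it is a quasi-isomorphism of the underlying complexes. Since this holds for every $m \geq 1$, $\id_V \ctensor f$ is by definition a weak equivalence in $\cChF$.

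Alternatively, and perhaps more cleanly, one can argue directly using the Milnor exact sequence. The complex $V \ctensor B = \plim_n (V/\cF_nV)\tensor B$ fits in a short exact sequence $0 \to {\lim}^1_n\, H^{\ast-1}\bigl((V/\cF_nV)\tensor B\bigr) \to H^\ast(V \ctensor B) \to \plim_n H^\ast\bigl((V/\cF_nV)\tensor B\bigr) \to 0$, and similarly for $B'$. Over a field, $(V/\cF_nV)\tensor (-)$ is exact, so $H^\ast\bigl((V/\cF_nV)\tensor B\bigr) \cong (V/\cF_nV)\tensor H^\ast(B)$ via the K\"unneth isomorphism, naturally in $B$. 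Since $f$ induces an isomorphism $H^\ast(B) \xto{\cong} H^\ast(B')$, the map $\id \tensor H^\ast(f)$ is an isomorphism of towers level-wise, hence induces isomorphisms on both $\plim_n$ and ${\lim}^1_n$. The five lemma then gives $H^\ast(\id_V \ctensor f)$ an isomorphism. Running the same argument with $\cF_mV$ in place of $V$ handles every filtration level.

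I expect the main obstacle to be bookkeeping rather than conceptual: carefully justifying the identification $\cF_m(V \ctensor B) \cong (\cF_mV) \ctensor B$ in $\cChF$, i.e.\ checking that the two inverse systems involved are pro-isomorphic and that the induced filtrations and differentials match up. This is the kind of routine-but-fiddly point about canonical filtrations on tensor products that \cite{DSV:2018} and \cite[\S7.3]{Fresse:BookI} set up, so I would simply cite those lemmas for the monoidal structure and spell out only the cofinality of the towers. Everything else is a direct appeal to Lemma \ref{lem:ctensor} (equivalently, to exactness of tensoring over a field plus the Milnor sequence), so no genuinely new ingredient is needed.
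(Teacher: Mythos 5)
The statement you are asked to prove \emph{is} Lemma 4.12 of \cite{CR}: the paper merely recalls it with a citation and contains no proof of its own, so there is nothing in-paper to compare against. Your first argument is circular. After identifying $\cF_m(V\ctensor B)$ with $(\cF_mV)\ctensor B$ you ``apply Lemma \ref{lem:ctensor} with $\cF_mV$ in place of $V$,'' i.e.\ you invoke the lemma in order to prove it. No reduction to discrete complexes has actually occurred --- $\cF_mV$ is just another complete filtered complex --- so this branch of the argument establishes nothing and should be discarded.

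Your second argument, by contrast, is a genuine and essentially complete proof. The identification $\cF_m(V\ctensor B)\cong \plim_{n\geq m}\bigl(\cF_mV/\cF_nV\bigr)\tensor B$, the surjectivity of the transition maps (hence Mittag-Leffler, hence the Milnor ${\lim}^1$ sequence), the K\"unneth isomorphism with its naturality in $B$, and the five lemma together do the job for each $m\geq 1$. One slip: over a field, K\"unneth gives $H^\ast\bigl((\cF_mV/\cF_nV)\tensor B\bigr)\cong H^\ast(\cF_mV/\cF_nV)\tensor H^\ast(B)$, not $(\cF_mV/\cF_nV)\tensor H^\ast(B)$ as written (the quotient carries a nonzero differential); the conclusion you need --- that $\id\tensor f$ is a level-wise isomorphism of towers of cohomology --- is unaffected. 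For comparison, there is a shorter route that avoids ${\lim}^1$ entirely: over a field every quasi-isomorphism of cochain complexes is a chain homotopy equivalence; chain homotopy equivalences are preserved by any additive functor; and $\cF_mV\ctensor(-)$ is additive. Hence each restriction $\cF_m(\id_V\ctensor f)$ is a homotopy equivalence, in particular a quasi-isomorphism, with no hypotheses on the towers needed.
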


\subsubsection{Models for intervals} \label{sec:int-model}
Recall from \cite[\S4.3.2]{CR} that 
a unital dg associative $\FF$-algebra $\J:=(\J,\del, \mult,\unit) \in \dgaF$ is a \df{model for the interval} if there exist unital dg algebra morphisms $\ev_0,\ev_1 \maps \J \to \FF$ such that the composition $\FF \xto{\unit} \J \xto{\ro} \FF \times \FF$ is the diagonal, where
$\ro:=(\ev_0,\ev_1)$. Furthermore, we require  $\rho$ to be an epimorphism, and both $\ev_0$ and $\ev_1$ to be  quasi-isomorphisms of cochain complexes.

Note that the axioms above imply that the unit $\unit$ is also a  quasi-isomorphism, and that 
the ``evaluation maps'' $\ev_0$, $\ev_1$ are epimorphisms. 
\begin{example}\label{ex:interval}
The main example for us is
$\J=N^\ast(\Del^1)$, the dg algebra \eqref{sec:N} of normalized cochain  on the 1-simplex.
Here  $\ev_0, \ev_1 \maps N^\ast(\Del^1) \to \FF$ are defined on the basis \eqref{sec:basis} to be 
\begin{equation} \label{eq:eval}
\ev_0(\vphi_0):= \ev_1(\vphi_1):=1_\FF, \quad \ev_0(\vphi_1):= \ev_1(\vphi_0):=0.
\end{equation} 
Another important example, for the case when $\FF=\kk$ is a field of characteristic zero,
is the commutative graded algebra  $\J:=\Om^\ast(\Del^1)$ of polynomial de Rham forms  on the 1-simplex.
This algebra will appear later in Section \ref{sec:char0}.
\end{example}

Models for the interval provide the category $\cChF$ with a way to factor morphisms. 
In the following proposition, we may choose $\J$ to be either of the two examples, depending on the characteristic of $\FF$. 
\begin{proposition}[Prop.\ 4.11 \cite{CR}] \label{prop:Ch-pobj}
Let $(V,d) \in \cChF$, and  let $\J:=(\J,\del, \mult,\unit)$ be a model for the interval.
Then the composition
\[
V \xto{\id \tensor \unit} V \ctensor \J \xto{\id_V \ctensor \ro} V \times V
\] 
is a factorization in $\cChF$ of the diagonal map $V \to V \times V$. Furthermore, $\id_V \ctensor \unit$ is a weak equivalence, and $\id_V \ctensor \ro$ is a fibration.
\end{proposition}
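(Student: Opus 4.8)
The statement is essentially an unwinding of the definitions together with two facts already in hand: Lemma \ref{lem:ctensor} (compatibility of $\ctensor$ with quasi-isomorphisms of discrete complexes) and the characterization of fibrations in $\cChF$ from Sec.\ \ref{sec:cCh}. The strategy is: (1) verify that the displayed composite agrees with the diagonal; (2) show $\id_V \ctensor \unit$ is a weak equivalence; (3) show $\id_V \ctensor \ro$ is a fibration. I expect step (3), specifically checking surjectivity on each filtration piece $\cF_n$, to be the main point requiring care; everything else is formal.

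\textbf{Step 1: the factorization identity.} By definition of a model for the interval, the composite $\FF \xto{\unit} \J \xto{\ro} \FF \times \FF$ is the diagonal $\FF \to \FF \times \FF$. Tensoring on the left with $V$ and using that $\ctensor$ is functorial and that $V \ctensor \FF \cong V$ canonically (taking $\FF$ with the discrete filtration, via \eqref{eq:ctensor-disc}), the composite $V \xto{\id \tensor \unit} V \ctensor \J \xto{\id_V \ctensor \ro} V \ctensor (\FF \times \FF) \cong V \times V$ is $\id_V \ctensor (\ro \cc \unit) = \id_V \ctensor \mathrm{diag}_\FF = \mathrm{diag}_V$, using that $\ctensor$ preserves the product $\FF \times \FF$ (finite products of complete filtered complexes are computed degreewise, and the projections are filtration-preserving). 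This is a routine diagram chase.

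\textbf{Step 2: $\id_V \ctensor \unit$ is a weak equivalence.} Since $\unit \maps \FF \to \J$ is by hypothesis a quasi-isomorphism of cochain complexes, and $\FF$, $\J$ are here regarded as discrete filtered complexes, Lemma \ref{lem:ctensor} applies verbatim with $(B,\del) = \FF$, $(B',\del') = \J$, $f = \unit$: it yields that $\id_V \ctensor \unit \maps V \ctensor \FF \to V \ctensor \J$ is a weak equivalence in $\cChF$. (The identification $V \ctensor \FF \cong V$ is the one from \eqref{eq:ctensor-disc}.)

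\textbf{Step 3: $\id_V \ctensor \ro$ is a fibration, and the obstacle.} We must show that for every $n \geq 1$ the restriction $\cF_n(\id_V \ctensor \ro) \maps \cF_n(V \ctensor \J) \to \cF_n(V \ctensor (\FF \times \FF))$ is surjective. Since $\FF \times \FF$ carries the discrete filtration, \eqref{eq:ctensor-disc} gives $\cF_n(V \ctensor (\FF \times \FF)) \cong \cF_n V \ctensor (\FF \times \FF) \cong \cF_n \hV \times \cF_n \hV$ where $\hV = \plim V/\cF_k V$; similarly the source is $\plim_k \bigl(\cF_n V \tensor \J / \cF_k(\cF_nV \tensor \J)\bigr)$ — here one must be slightly careful because $\J$ is not discrete, so $\cF_n(V\ctensor \J)$ is computed using the canonical filtration \eqref{eq:filt-tensor}. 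The point is that $\ro = (\ev_0, \ev_1)$ is a \emph{split} surjection of filtered (indeed discrete) complexes: it admits a filtration-preserving $\FF$-linear section, namely any linear splitting, because $\FF$ is a field and $\ro$ is a surjective $\FF$-linear map of discrete filtered complexes (surjectivity of $\ro$ is part of the definition of a model for the interval). Tensoring this section with $\id_V$ and completing gives a section of $\id_V \ctensor \ro$, and completion of a surjection of filtered complexes whose restrictions to all filtration pieces are surjective is again levelwise surjective — this is exactly the content that lets Lemma \ref{lem:fepi}-type reasoning go through. Concretely: $\cF_n(\id_V \tensor \ro) \maps \cF_n(V \tensor \J) \to \cF_n(V \tensor(\FF\times\FF))$ is surjective for each $n$ because it is a filtered quotient of the split surjection $\id_V \tensor \ro$, and passing to the limit $\plim_k$ over $k$ preserves this surjectivity since the kernels form a tower of surjections (Mittag-Leffler). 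The mild technical nuisance — and the step I'd expect to have to write out most carefully — is reconciling the canonical tensor-product filtration \eqref{eq:filt-tensor} on $V \ctensor \J$ with the computation on $V \ctensor(\FF\times\FF)$, i.e.\ checking that the section respects \eqref{eq:filt-tensor}; but since the section has the form $\id_V \tensor s$ with $s \maps \FF\times\FF \to \J$ linear, and $\FF \times \FF$ is discrete, this is immediate from \eqref{eq:filt-tensor} together with the special case $\cF_n(V \tensor W) = \cF_n V \tensor W$ for $W$ discrete. Combining Steps 1–3 gives the proposition.
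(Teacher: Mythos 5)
Your proof is correct. The paper does not reprove this statement --- it is quoted verbatim from \cite{CR} (Prop.\ 4.11) --- but your argument is exactly the expected one: the factorization follows from $\ro \cc \unit = \mathrm{diag}_\FF$ and the monoidal identifications, the weak equivalence is a direct application of Lemma \ref{lem:ctensor} to the quasi-isomorphism $\unit \maps \FF \to \J$, and the fibration claim follows from a degreewise $\FF$-linear splitting $s$ of the epimorphism $\ro$, since $\id_V \ctensor s$ is then a filtration-preserving right inverse of $\id_V \ctensor \ro$ (here the discreteness of $\J$ and $\FF \times \FF$ makes the filtration bookkeeping trivial, as you note). The Mittag--Leffler remark in Step 3 is superfluous once you have the splitting --- surjectivity on each $\cF_n$ is immediate from the section --- but it does no harm.
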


\section{Complete filtered $\sainf$-algebras} \label{sec:comp-Ass}

\subsection{$\sainf$-algebras and $\infty$-morphisms} \label{sec:ainf}
Our notation and conventions in this section follow, for the most part,  \S2 and  \S3.1 of \cite{DW}. In particular, we refer the reader there for background on dg coassociative coalgebras. 
Note that, in contrast with \cite{DW}, we do not consider ``curved'' structures in this paper. (See Rmk.\ \ref{rmk:curved} for further discussion of this point.) Recall that the cofree conilpotent coassociative coalgebra generated by $V \in \VectF$ is the graded vector space $\Tc(V):= \T(V)=\bigoplus^{\infty}_{k = 1} V^{\tensor k}$ equipped with the comultiplication corresponding to ``deconcatenation of tensor words''.  An \df{$\sainf$-algebra} or {``shifted $\ainf$-algebra''} over $\FF$ is a triple  $\AdQ{}$ consisting of  a cochain complex $(A, d) \in \ChF$ equipped with a degree $1$ codifferential $Q$ on $\Tc(A)$ such that 
$Q\vert_A =d$ and $Q^2 =0$. An \df{$\infty$-morphism} $\Ph \maps \AdQ{} \to \AdQ{\prime}$ 
between $\sainf$-algebras is a dg coalgebra morphism $\Ph \maps (\Tc(A),Q) \to  (\Tc(A'),Q')$. We denote by $\As$ the category of $\sainf$-algebras and $\infty$-morphisms.

Since $\Tc(V)$ is ``cofree'' for any graded vector space $V$, every degree 1 coderivation $Q \maps \Tc(V) \to \Tc(V)$ is uniquely determined by the projection $Q^1:= \mathrm{pr}_{V} \circ Q \maps \T(V) \to V$. The linear map $Q^1$ is, in turn, uniquely determined by its degree 1 ``structure maps''
\begin{equation} \label{eq:Qstruct}
\sQ_m:= \sQ \vert_{V^{\tensor m}} \maps V^{\tensor m} \to V, \quad \forall m \geq 1.
\end{equation}  
Note that if $\AdQ{} \in \As$ is a $\sainf$-structure on the complex $(A,d)$, then $\sQ_1$ is necessarily the differential $d$ on $A$. Furthermore, the equality $Q^2 =0$ is equivalent to the  equalities $\sum_{k=1}^{n} Q^1_k Q^k_n =0$ for all $n\geq 1$, where $Q^k_n:= \pr_{A^{\tensor k}} \cc Q \vert_{A^{\tensor n}}$.

Similarly, every graded coalgebra morphism $\Ph \maps \Tc(V) \to \Tc(V')$ is uniquely determined by the degree 0 linear map $\Ph^1:= \pr_{V'} \cc \Ph \maps \T(V) \to V'$, which is itself determined by the structure maps:
\begin{equation} \label{eq:Fstruct}
\Ph^1_m:= \Ph^1 \vert_{V^{\tensor m}} \maps V^{\tensor m} \to V', \quad \forall m \geq 1.
\end{equation}
In particular, if $\Ph \maps \AdQ{} \to \AdQ{\prime}$ is an $\infty$-morphism, then the compatibility with the codifferentials $ \Ph Q=Q' \Ph$ is equivalent to the  equalities $\sum_{k=1}^{n} \Ph^1_k Q^k_n =
\sum_{k=1}^{n} Q^{'1}_k \Ph^k_n$ for all $n\geq 1$, where $\Ph^k_n:= \pr_{A^{\prime \tensor k}} \cc \Ph \vert_{A^{\tensor n}}$. For $n=1$, this implies that the linear map $\sPh$ satisfies $d' \cc \sPh = \sPh \cc d$, i.e.\ it is a morphism between the cochain complexes $(A,d)$ and $(A',d')$. The \df{tangent functor} 
is the functor 
\begin{equation} \label{eq:tan}
\tan \maps \As \to \ChF
\end{equation}
which assigns an $\sainf$-algebra $\AdQ{}$ to its tangent cochain complex $(A,d)$, and an $\infty$-morphism $\Phi\maps \AdQ{} \to \AdQ{\prime}$
to its linear term $\sPh \maps (A,d) \to (A',d')$. 

\begin{example} \label{ex:dga}
Let $(A,d,\mu) \in \dgaF$ be a (non-unital) dg associative algebra. Then $(\bs^{-1} A, \ti{d}, Q) \in \As$ where: $\bs^{-1} A=A[1]$ is the desuspension of $A$; $\ti{d}:=\bs^{-1} \cc d \cc \bs$; and $Q$ is the unique degree 1 coderivation such that $Q^1_1 = \ti{d}$, $Q^1_2 = \bs^{-1} \cc \mu \cc (\bs \tensor \bs)$, and $Q^1_{k \geq 3} =0$. Moreover, a morphism $\phi$ of dg algebras defines a unique $\infty$- morphism $\Ph$ between the corresponding $\sainf$-algebras with $\sPh= \bs^{-1} \cc \ph \cc \bs$, and $\Ph^{1}_{k\geq 2}=0$. 
\end{example}

Finally, recall that a morphism $\Phi \maps \AdQ{} \to \AdQ{\prime}$ in $\As$ is \df{strict} if $\Ph^1_k=0$ for all $ k\geq 2$, and we say $\Ph$ is an \df{$\infty$-quasi-isomorphism} if the tangent map $\tan(\Ph)$ is a quasi-isomorphism of cochain complexes.

\subsection{The category of complete filtered $\ainf[1]$-algebras} \label{sec:comp-ainf}

Following, for example, \cite[\S3.1]{DW}, a \df{complete  $\sainf$-algebra} is a complete cochain complex $(A,d) \in \cChF$ equipped with an $\sainf$-structure  
$\AdQ{}$ which is compatible with the filtration, i.e.\ for all $n \geq 1$
\[
\sQ_n(\cF_{i_1}A \tensor \cF_{i_2}A \tensor \cdots \tensor \cF_{i_n}A) \sse \cF_{i_1 + i_2 + \cdots + i_n}A.
\]
A  \df{(filtered) $\infty$-morphism} $\Ph \maps \AdQ{} \to \AdQ{\prime}$ between  $\csainf$-algebras is an
$\infty$-morphism such that for all $n \geq 1$
\[
\Ph^1_n(\cF_{i_1}A \tensor \cF_{i_2}A \tensor \cdots \tensor \cF_{i_n}A) \sse  \cF_{i_1 + i_2 + \cdots + i_n}A'.
\]

Denote by $\cAs$, the category of   $\csainf$ algebras. The tangent functor \eqref{eq:tan} extends to the complete filtered setting. We also denote this functor as $\ctan \maps \cAs \to \cCh$. 
\begin{definition} \label{def:cAsmap}
A morphism $\Ph \maps \AdQ{} \to \AdQ{\prime}$ in $\cAs$ is a \df{weak equivalence/}\\
\df{fibration/acyclic fibration} if $\ctan(\Ph)$ is a weak equivalence/fibration/acyclic fibration in $\cCh$.
\end{definition}

\subsection{Tensoring   $\csainf$-algebras with dg algebras} \label{sec:Ainf-tensor}
Continuing the discussion on tensor products from \S\ref{sec:smc}, if $\AdQ{}$  is an   $\csainf$-algebra, and $B=(B,\del,\cdotp) \in \dga$ is a dg associative algebra, then the tensor product $(A \ctensor B, d_{B}, Q_{B})$ is a   $\csainf$-algebra
with $d_B := d \tensor \id_B + \id_A \tensor \del$, and
\begin{equation} \label{eq:Ainf-tensor1}
(Q_{B})^1_{k}(x_1 \tensor b_1, x_2 \tensor b_2, \cdots, x_k \tensor b_k):=
(-1)^{\varepsilon}\sQ_k(x_1,\ldots,x_k) \tensor b_1 \cdotp b_2
\cdotp \cdots \cdotp b_k
\end{equation}
for $k \geq 2$, where $(-1)^\varepsilon$ is the usual Koszul sign. 
Note that this construction defines a functor 
\begin{equation} \label{eq:Ainf-tensor2}
A \ctensor - \maps \dga \to \cAs.
\end{equation}
Indeed, if $\phi \maps B \to B'$ is a morphism of dg algebras, then  
$\id_A \ctensor \phi$ is a strict $\infty$-morphism in $\cAs$. 

Similarly, given $B \in \dga$ and a morphism $\Ph \maps \AdQ{} \to \AdQ{\prime}$ in $\cAs$, we obtain an $\infty$-morphism between the tensor products $\Ph_B \maps (A \ctensor B, d_{B}, Q_{B}) \to (A' \ctensor B, d'_{B}, Q'_{B})$ where for each $k \geq 1$
\[
(\Ph_{B})^1_{k}(x_1 \tensor b_1, x_2 \tensor b_2, \cdots, x_k \tensor b_k):=
(-1)^{\varepsilon}\Ph^1_k(x_1,\ldots,x_k) \tensor b_1 \cdotp b_2
\cdotp \cdots \cdotp b_k.
\]
This defines a functor
\begin{equation} \label{eq:Ainf-tensor3}
-\ctensor B \maps \cAs \to \cAs.
\end{equation}

The next lemma will play an important role in \S\ref{sec:char0}. 
\begin{lemma}\label{lem:ainf-tensor}
Let $B,C \in \dga$ be dg associative algebras, and denote by $B \tensor C \in \dga$ their usual tensor product as dg algebras. Let $\AdQ{} \in \cAs$. Then there are strict isomorphisms in $\cAs$ 
\begin{equation} \label{eq:ainf-tensor2}
\begin{split}
\bigl( (A \ctensor B) \ctensor C, (d_{B})_{C} , (Q_{B})_{C} \bigr) &\cong
\bigl( A \ctensor (B \tensor C), d_{B \tensor C}, Q_{B \tensor C} \bigr)\\ 
&\cong \bigl( A \ctensor (C \tensor B), d_{C \tensor B}, Q_{C \tensor B} \bigr) \cong
\bigl( (A \ctensor C) \ctensor B, (d_{C})_{B} , (Q_{C})_{B} \bigr) 
\end{split}
\end{equation} 
which are natural in $A$, $B$, and $C$.
\end{lemma}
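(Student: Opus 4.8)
The plan is to reduce everything to the corresponding statements for completed tensor products of complete filtered cochain complexes, and then check that the extra structure maps $Q$ transform correctly under those identifications. First I would recall from \S\ref{sec:smc} that the completion functor $\wh{\Disc}(-)$ and, more generally, the completed tensor product $\ctensor$ on $\cChF$ is symmetric monoidal; in particular there are natural associativity and symmetry isomorphisms
\[
\bigl( (A \ctensor B) \ctensor C, (d_B)_C \bigr) \cong \bigl( A \ctensor (B \tensor C), d_{B\tensor C}\bigr) \cong \bigl( A \ctensor (C \tensor B), d_{C \tensor B} \bigr) \cong \bigl( (A \ctensor C) \ctensor B, (d_C)_B \bigr)
\]
as complete filtered cochain complexes, natural in all three variables. (Here I use that $B$ and $C$ carry the discrete filtration, so that the canonical filtration on $B \tensor C$ of Eq.\ \ref{eq:filt-tensor} is again discrete, and Eq.\ \ref{eq:ctensor-disc} applies.) The underlying graded-vector-space map of the composite is the obvious one sending $(x \tensor b)\tensor c \mapsto x \tensor (b \tensor c) \mapsto x \tensor (c \tensor b) \mapsto (x \tensor c) \tensor b$, up to the Koszul sign coming from permuting $b$ past $c$.

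Next I would promote this to a statement in $\cAs$. By the discussion around Eq.\ \eqref{eq:Qstruct}, a codifferential on $\Tc(-)$ is determined by its structure maps $Q^1_k$, and an $\infty$-morphism by its structure maps $\Ph^1_k$; so to check that the above linear isomorphism underlies a \emph{strict} isomorphism of $\csainf$-algebras it suffices to verify that it is strict (i.e.\ has vanishing higher components $\Ph^1_{k\geq 2}$, which is clear since it is induced by an isomorphism of the generating complexes) and that it intertwines the structure maps $(Q_B)_C)^1_k$ with $(Q_{B\tensor C})^1_k$, and likewise on the other side. Using the explicit formula \eqref{eq:Ainf-tensor1}, applying $Q_B$ and then $(-)_C$ multiplies the $B$-factors and then the $C$-factors separately, so that
\[
\bigl( (Q_B)_C \bigr)^1_k\bigl( (x_1 \tensor b_1) \tensor c_1, \ldots, (x_k \tensor b_k)\tensor c_k \bigr) = (-1)^{\eta}\, \sQ_k(x_1,\ldots,x_k) \tensor (b_1 \cdots b_k) \tensor (c_1 \cdots c_k),
\]
whereas $Q_{B \tensor C}$ applied to the reassociated inputs multiplies in $B \tensor C$, which by definition of the tensor product of dg algebras is exactly $(b_1 \tensor c_1)\cdots(b_k \tensor c_k) = \pm (b_1\cdots b_k)\tensor(c_1\cdots c_k)$; the two Koszul signs agree because both count the same transpositions of the $b_i$'s past the $c_j$'s and $x_j$'s. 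The same bookkeeping handles the symmetry isomorphism $B \tensor C \cong C \tensor B$ and the final associativity isomorphism, and the differential parts $d_B$ vs.\ $d_{B\tensor C}$ match because $d_{B\tensor C} = d \tensor \id + \id \tensor \del_{B\tensor C}$ and $\del_{B \tensor C}$ is the usual tensor-product differential. Naturality in $A$, $B$, $C$ is inherited from naturality of the monoidal-structure isomorphisms on $\cChF$ together with functoriality of \eqref{eq:Ainf-tensor2} and \eqref{eq:Ainf-tensor3}.

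The only real content, and the step I expect to require the most care, is the sign computation: one must pin down a consistent convention for the Koszul sign $(-1)^\varepsilon$ in \eqref{eq:Ainf-tensor1} and then check that reassociating/transposing the $b$'s and $c$'s produces exactly matching signs on both sides of \eqref{eq:ainf-tensor2}. This is routine but error-prone; I would organize it by noting that all four algebras in \eqref{eq:ainf-tensor2} are, as graded vector spaces with codifferential, obtained from a single ``$n$-fold tensor'' picture $A \ctensor B \ctensor C$ by choosing an order in which to multiply, and that the structure maps of $Q$ are defined precisely so as to be insensitive (up to the prescribed Koszul sign) to that choice — which is the associativity and commutativity built into the tensor product of dg algebras $B$ and $C$. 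Once the signs are checked on generators, strictness and the coalgebra-morphism property are automatic, completing the proof.
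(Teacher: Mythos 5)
Your proposal is correct and follows essentially the same route as the paper: the underlying isomorphisms of complete filtered objects come from the strong symmetric monoidal structure of the (completed) discrete functor, and the compatibility of the codifferentials is then checked by direct inspection of the formula \eqref{eq:Ainf-tensor1} for $(Q_B)_C$ versus $Q_{B\tensor C}$. The paper's proof is terser (it leaves the sign bookkeeping implicit in the phrase ``direct inspection''), but there is no substantive difference in approach.
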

\begin{proof}
The morphisms \eqref{eq:ainf-tensor2} originate from the natural isomorphisms in $\Vect$ and $\cVect$ which witness the associativity and braiding of the respective symmetric monoidal structures.
Hence, as morphisms between complete filtered vector spaces, they are indeed isomorphisms since the functor \eqref{eq:comp-disc} is strong symmetric monoidal. All that remains is to verify that  
the functors 
\[
\dga \xto{(A \ctensor B) \ctensor -} \cAs, \qquad
\dga \xto{B \tensor -} \dga \xto{A \ctensor - } \cAs
\]
are naturally isomorphic as functors into   $\csainf$-algebras. This follows by direct inspection
of the formulas \eqref{eq:Ainf-tensor1} for the codifferentials $(Q_{B})_{C}$    
and $Q_{B \tensor C}$.
\end{proof}

\subsection{Maurer-Cartan elements} \label{sec:MC}
The standard facts concerning the Maurer-Cartan theory of $L_\infty$-algebras in characteristic zero, e.g., \cite[\S 5.4]{CR}, readily extend to  $A_\infty$-algebras over $\FF$. Our reference for this section is \cite[\S 4]{DW}. Given $\AdQ{} \in \cAs$, the \df{curvature}  $\curv_\Ass \maps A^0 \to A^{1}$ is the function which assigns to a degree zero element $a \in A^0$ the infinite series $\curv_\Ass(a):=\sum_{n=1}^{\infty} \sQ_n(a^{\tensor n})$. This is well defined since $A$ is complete and $A=\cF_1A$. 

The \df{Maurer-Cartan set} of $\AdQ{}$ is the set  $\MCAs(A):=\{a \in A^0 \st \curv_\Ass(a)=0\}$. 
An $\infty$-morphism $\Ph \maps \AdQ{} \to \AdQ{\prime}$ in $\cAs$ yields a set-theoretic function
\begin{equation} \label{eq:phistar}
\Ph_\ast \maps A^0 \to A^{\prime  0}, \quad \Ph_\ast(a):= \sum_{n=1}^{\infty} \Ph^1_n(a^{\tensor n})
\end{equation} 
which is well-defined due to the  compatibility of $\Phi$ with the filtrations. Note that if $\Ph=\sPh$ is strict, then $\Ph_\ast(a)=\sPh(a)$. From the equality
\[
\curv_{\Ass}' \bigl(\Ph_\ast(a) \bigr) = \sum_{n \geq 0}^{\infty} \Ph^1_{n+1} ( \shpr{a^{\tensor n}}{\curv_{\Ass}(a)} ),
\]
where 
\begin{equation} \label{eq:shpr}
\star_{\ssh} \maps \Tc(A) \tensor \Tc(A) \to \Tc(A)
\end{equation}
is the usual shuffle product \cite[\S2]{DW}, it follows that $\Ph_\ast$ descends to a function between Maurer-Cartan sets. The \df{Maurer-Cartan functor} $\MCAs(-) \maps \cAs \to \Set$ sends
 a morphism $\Ph \maps \AdQ{} \to \AdQ{\prime}$ in $\cAs$ to the function $\MCAs(\Ph):=\Ph_\ast \maps \MCAs(A) \to \MCAs(A')$. 
\subsubsection{Twisting} \label{sec:twist}
Given a degree 0 element $a \in A^0$ of a   $\csainf$-algebra $\AdQ{}$, we define for each $n \geq 1$ a ``twisted'' $n$-ary operation $(Q^a)^1_n \maps A^{\tensor n} \to A$ via the formula
\[
(Q^a)^1_n(x_1,\ldots,x_n):= \sum_{k\geq 0}^{\infty} \sQ_k\bigl(\shpr{a^{\tensor k}}{(x_1 \tensor x_2 \tensor \cdots \tensor x_n)} \bigr). 
\]
As explained in \cite[Lem.\ 4.3, Def.\ 4.4]{DW}, if $\al \in \MCAs(A)$ is a Maurer-Cartan element, then the operations $(Q^\al)^1_n$ induce a new   $\csainf$-algebra structure $A^\al:=(A,d^\al,Q^\al)$ on the original underlying graded vector space $A$. The twisted differential on the underlying tangent cochain complex $\ctan(A^\al)=(A,d^\al) \in \cChF$ is necessarily defined to be $d^a:=(Q^a)^1_1$. Moreover, $\infty$-morphisms can be twisted as well \cite[Def.\ 4.9]{DW}.
Given  $\Ph \maps \AdQ{} \to \AdQ{\prime}$ in $\cAs$, and $\al \in \MCAs(A)$ the linear maps
\[
(\Ph^\al)^1_n(a_1, \ldots, a_n):= \sum^\infty_{k=0} \Ph^1_{n+k} \bigl( \shpr{\al^{\tensor k}}
{(a_1 \tensor \cdots a_n)} \bigr)
\] 
define an $\infty$-morphism $\Ph^\al \maps (A^\al,d^\al,Q^{\al}) \to (A^{\prime \be},d^{\prime \be},Q^{\prime \be})$, where $\be:=\Ph_{\ast}(\al)\in \MCAs(A')$.

\section{The nerve of a complete $\sainf$-algebra} \label{sec:nerve}
The  nerve $\sNb(A)$ of a   $\csainf$-algebra $\AdQ{} \in \cAs$ is a simplicial set that generalizes the classical Dold-Kan correspondence for non-positively graded cochain complexes. Here we recall the construction of $\sNb(A)$ and its basic properties. Note that in  \cite{DW}, de Kleijn and Wierstra call $\sNb(A)$ the ``Maurer-Cartan simplicial set of $A$''. We reserve that terminology for a different simplicial set which appears later in Sec.\ \ref{sec:char0}.

\subsection{The functor $\sNb(-)$} \label{sec:nerve-kan}
Given $\AdQ{} \in \cAs$, for all $n\geq 0$, the complete tensor product
$A \ctensor N^\ast(\Del^n)$ is a $\csainf$-algebra, as described in Sec.\ \ref{sec:Ainf-tensor}. Furthermore, since $N^\ast(\Del^n)$ is a finite-dimensional $\FF$-vector space, the completed tensor product is naturally isomorphic to the usual tensor product over $\FF$:
\[
A \ctensor N^\ast(\Del^n) \cong  A \tensor N^\ast(\Del^n).
\]
\begin{definition} \label{def:nerve}
The \df{nerve} of a   $\csainf$-algebra $\AdQ{} \in \cAs$ is the simplicial set $\sNb(A)$, where
\[
\sN_n(A):= \MCAs \bigl(A \tensor N^\ast(\Del^n)\bigr) \qquad \forall n\geq 0.
\]
\end{definition}
Composing the functor \eqref{eq:Ainf-tensor3} $- \tensor N^\ast(\Del^n) \maps \cAs \to \cAs$  with
the Maurer-Cartan functor $\MCAs \maps \cAs \to \Set$ shows that the nerve is a functorial construction
\[
\sNb \maps \cAs \to \sSet.
\]  
An initial study of the homotopical properties of the functor $\sNb(-)$ was established by de Kleijn and Wierstra \cite{DW}. In the notation of the present paper, they proved the following:  

\begin{theorem}[Prop.\ 6.6, Cor.\ 6.7; \cite{DW}]\label{thm:Kan}
Let $\Ph \maps \AdQ{} \to \AdQ{\prime}$ be a strict fibration in $\cAs$. Then
$\sNb(\Ph) \maps  \AdQ{} \to \AdQ{\prime}$ is a Kan fibration of simplicial sets. In particular, $\sNb(A)$ is a Kan simplicial set for every $\AdQ{} \in \cAs$, and so the nerve functor equals its corestriction
\[
\sNb \maps \cAs \to \Kan.
\]
to the full subcategory $\Kan \sse \sSet$.
\end{theorem}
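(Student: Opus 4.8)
The plan is to verify directly the right-lifting property against the horn inclusions $\horn{n}{k} \emb \Del^n$ (for $0 \leq k \leq n$, $n \geq 1$) that characterizes Kan fibrations, by transporting each lifting problem into the Maurer--Cartan theory of $\cAs$. As in \cite{DW}, a simplicial map $\horn{n}{k} \to \sNb(A)$ is the same datum as a Maurer--Cartan element of $A \tensor N^\ast(\horn{n}{k})$, where $N^\ast(\horn{n}{k})$ is the finite-dimensional dg algebra obtained by dualizing the dg coalgebra $N_\ast(\horn{n}{k})$, and the restriction map $p \maps N^\ast(\Del^n) \to N^\ast(\horn{n}{k})$ is a surjection of dg algebras. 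Since $\Del^n$ and $\horn{n}{k}$ are both contractible, $p$ is a quasi-isomorphism, so its kernel $\ker p$ is a finite-dimensional acyclic dg ideal; in particular $V \tensor \ker p$ is acyclic for every complex $V$. After this unwinding, a lifting square amounts to the following: given $\be \in \MCAs\!\bigl(A' \tensor N^\ast(\Del^n)\bigr)$ and $\al_0 \in \MCAs\!\bigl(A \tensor N^\ast(\horn{n}{k})\bigr)$ with $(\Ph \tensor \id)(\al_0) = (\id_{A'} \tensor p)(\be)$, produce $\al \in \MCAs\!\bigl(A \tensor N^\ast(\Del^n)\bigr)$ with $(\id_A \tensor p)(\al) = \al_0$ and $(\Ph \tensor \id)(\al) = \be$.

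Two structural facts organize the construction. First, since $\Ph$ is strict, $K := \ker(\Ph^1_1)$ inherits the structure of a complete filtered $\sainf$-subalgebra of $A$: strictness forces $Q^1_m(a_1,\dots,a_m) \in K$ as soon as some $a_i$ lies in $K$. Second, the morphism $\bigl(\Ph \tensor \id,\, \id_A \tensor p\bigr)$ from $A \tensor N^\ast(\Del^n)$ to the pullback $\bigl(A' \tensor N^\ast(\Del^n)\bigr) \times_{A' \tensor N^\ast(\horn{n}{k})} \bigl(A \tensor N^\ast(\horn{n}{k})\bigr)$ is a strict fibration in $\cAs$, because its linear term is surjective on each filtration layer, by a two-step lift that uses that $\Ph$ is a fibration and $p$ is surjective. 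Consequently the degree-$0$ element $(\be,\al_0)$ of the pullback lifts to a degree-$0$ element $\al^{(0)} \in A \tensor N^\ast(\Del^n)$ with $(\id_A \tensor p)(\al^{(0)}) = \al_0$ and $(\Ph \tensor \id)(\al^{(0)}) = \be$ \emph{exactly}. As strict $\infty$-morphisms commute with the curvature and $\al_0,\be$ are Maurer--Cartan, $\curv_\Ass(\al^{(0)})$ is annihilated by $\id_A \tensor p$ and by $\Ph \tensor \id$, hence it lies in $K \tensor \ker p$.

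The core of the argument is then an obstruction-theoretic induction along the filtration. Starting from $\al^{(0)}$, I would construct $\al^{(m)}$ for $m \geq 1$, preserving the two restriction constraints exactly and with $\curv_\Ass(\al^{(m)}) \in \cF_{m+1}K \tensor \ker p$, by setting $\al^{(m)} := \al^{(m-1)} - \xi_m$ for a suitable $\xi_m \in \cF_m K \tensor \ker p$ (so that neither restriction is disturbed). A filtration estimate shows that, modulo $\cF_{m+1}$, one has $\curv_\Ass(\al^{(m-1)} - \xi_m) \equiv \curv_\Ass(\al^{(m-1)}) - d\xi_m$, where $d$ is the differential of $A \tensor N^\ast(\Del^n)$ -- the remaining twisting terms and the terms quadratic in $\xi_m$ all raise the filtration. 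The Bianchi identity $(Q^{\al^{(m-1)}})^1_1\!\bigl(\curv_\Ass(\al^{(m-1)})\bigr) = 0$, together with the same estimate, shows that $\curv_\Ass(\al^{(m-1)})$ descends to a $d$-cocycle in $\Gr_m K \tensor \ker p$; since this complex is acyclic (because $\ker p$ is), the cocycle is a coboundary there, which supplies $\xi_m$ with $d\xi_m \equiv \curv_\Ass(\al^{(m-1)}) \bmod \cF_{m+1}$, and hence $\al^{(m)}$. Because $A \tensor N^\ast(\Del^n)$ is complete and $\al^{(m)} \equiv \al^{(m-1)} \bmod \cF_m$, the sequence $\{\al^{(m)}\}_{m \geq 0}$ converges to an element $\al$; continuity of the tensor maps and of $\curv_\Ass$ yields $(\id_A \tensor p)(\al) = \al_0$, $(\Ph \tensor \id)(\al) = \be$, and $\curv_\Ass(\al) \in \bigcap_m \cF_{m+1} = 0$, so $\al$ is the required filler. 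Applying the fibration statement to the terminal morphism $A \to 0$ (for which $\sNb(0)$ is a point) then shows $\sNb(A)$ is a Kan complex for every $A$, and the corestriction of $\sNb(-)$ to $\Kan$ follows at once.

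I expect the main obstacle to be the bookkeeping that makes the induction close: checking that the obstruction at each stage automatically lies in $\cF_m K \tensor \ker p$ -- which uses simultaneously that $\al_0$ and $\be$ are Maurer--Cartan and that $K$ is an $\sainf$-subalgebra -- and that it is a cocycle modulo the next filtration layer, via the Bianchi identity and a filtration degree count. Acyclicity of $\ker p$ -- equivalently, contractibility of the horn -- is precisely what promotes each such cocycle to a coboundary, so it is essential that $p$ be a quasi-isomorphism and not merely a surjection; and the strictness hypothesis on $\Ph$ is what makes $K$ an $\sainf$-subalgebra and keeps the whole argument within the linear terms of the $\infty$-morphisms involved.
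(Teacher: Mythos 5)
Your argument is correct, but note that the paper does not prove Theorem \ref{thm:Kan} at all: it is imported verbatim from de Kleijn--Wierstra (Prop.~6.6 and Cor.~6.7 of \cite{DW}), so there is no in-paper proof to compare against. What you have written is essentially the standard obstruction-theoretic proof used in \cite{DW} (and, for the $L_\infty$ analogue, by Getzler and Dolgushev--Rogers): translate the horn-lifting problem into a Maurer--Cartan lifting problem over the surjective quasi-isomorphism $N^\ast(\Del^n) \twoheadrightarrow N^\ast(\horn{n}{k})$, lift the underlying degree-$0$ element using the two surjectivity hypotheses, and kill the curvature by induction along the filtration using the Bianchi identity and the acyclicity of $\ker p$; all the filtration estimates and the identification of the obstruction space with $\cF_m K \tensor \ker p$ check out.
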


\subsection{Basepoints of $\sNb(A)$ and twisting} \label{sec:basepoints}
The twisting procedure recalled in Sec.\ \ref{sec:twist} provides a convenient way to switch between basepoints in the nerve. The next lemma is the $A_\infty$ version of \cite[Lem.\ 4.3]{DR-shift}.  
The proof is exactly the same.
\begin{lemma}\label{lem:shift}
Let $\AdQ{} \in \cAs$, $\al \in \MCAs(A)$, and $(A^\al, d^\al, Q^\al)$ be the corresponding $\csainf$-algebra
twisted by $\al$. Then the assignment $\beta \in \sN_n(A^{\al}) \mapsto \al + \beta \in \sN_n(A)$
induces an isomorphism of simplicial sets $\mathrm{Shift}_{\al}\maps \sNb(A^{\al}) \xto{\cong} \sNb(A)$.
Moreover, for each morphism $\Ph \maps \AdQ{} \to \AdQ{\prime}$ in $\cAs$, the following diagram commutes:

\[
\begin{tikzdiag}{2}{4}
{
\sNb(A^{\al})\& \sNb(A)  \\
\sNb(A^{\prime \, \Phi_{*}(\al)}) \&\sNb(A^{\prime}) \\
};

\path[->,font=\scriptsize]
(m-1-1) edge node[auto] {$\mathrm{Shift}_{\al}$} (m-1-2)
(m-2-1) edge node[auto] {$\mathrm{Shift}_{\Phi_{*}(\al)}$} (m-2-2)
(m-1-1) edge node[auto,swap] {$\sNb(\Phi^\al)$} (m-2-1)
(m-1-2) edge node[auto] {$\sNb(\Phi)$} (m-2-2)
;
\end{tikzdiag}
\]
\end{lemma}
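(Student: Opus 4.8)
The plan is to build $\mathrm{Shift}_\al$ dimension by dimension as the composite of two elementary bijections --- ``twisting commutes with tensoring'' and ``translation by a Maurer--Cartan element'' --- and then to deduce simpliciality and the naturality square by unwinding the shuffle-product formulas of Sec.\ \ref{sec:twist}. Fix $n \geq 0$ and abbreviate $B := N^\ast(\Del^n)$, with unit $\unit_n \in B^0$, recalling that $A \ctensor B \iso A \tensor B$ since $B$ is finite-dimensional. First I would note that $\al \tensor \unit_n$ lies in $\MCAs(A \tensor B)$: from \eqref{eq:Ainf-tensor1}, the identities $\del \unit_n = 0$ and $\unit_n \cupp \unit_n = \unit_n$, and the fact that all Koszul signs in play are trivial (everything here has degree $0$), one gets $\curv_\Ass(\al \tensor \unit_n) = \curv_\Ass(\al) \tensor \unit_n = 0$.

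\emph{Step 1: twisting commutes with tensoring.} I would verify, by direct comparison of structure maps, that the identity on underlying graded vector spaces is a strict isomorphism of $\csainf$-algebras $A^\al \tensor B \iso (A \tensor B)^{\al \tensor \unit_n}$, and that, under this identification, $(\Ph^\al)_B = (\Ph_B)^{\al \tensor \unit_n}$ for every $\infty$-morphism $\Ph$ in $\cAs$. The mechanism is that a shuffle of $j$ copies of $\al \tensor \unit_n$ into a word $(x_1 \tensor b_1) \tensor \cdots \tensor (x_k \tensor b_k)$, fed into the tensor-product structure map $(Q_B)^1_{k+j}$ of \eqref{eq:Ainf-tensor1}, collapses the $B$-factors to $b_1 \cupp \cdots \cupp b_k$ (shuffling preserves the order of the $b_i$, and $\unit_n$ is a unit) and reassembles the $A$-factors into the shuffle $\shpr{\al^{\tensor j}}{x_1 \tensor \cdots \tensor x_k}$; summing over $j$ reproduces $(Q^\al)^1_k(x_1,\ldots,x_k) \tensor b_1 \cupp \cdots \cupp b_k$, which is precisely the value on that word of the codifferential of $A^\al \tensor B$ prescribed by \eqref{eq:Ainf-tensor1}. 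The same bookkeeping, applied to the formula for $\Ph^\al$, yields the statement for morphisms.

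\emph{Step 2: translation of Maurer--Cartan elements.} Next I would record the identity $(\gamma + \beta)^{\tensor m} = \sum_{k + n = m} \shpr{\gamma^{\tensor k}}{\beta^{\tensor n}}$ in $\Tc(C)$, valid for any $C = \AdQ{} \in \cAs$ and $\gamma, \beta \in C^0$ --- every shuffle sign being $+1$ since $\gamma$ and $\beta$ are of degree $0$. Applying $\curv_\Ass$, respectively $\Ph_\ast$, and regrouping using the twisting formulas of Sec.\ \ref{sec:twist}, gives
\[
\curv_\Ass(\gamma + \beta) = \curv_\Ass(\gamma) + \sum_{n \geq 1} (Q^\gamma)^1_n(\beta^{\tensor n}), \qquad \Ph_\ast(\gamma + \beta) = \Ph_\ast(\gamma) + (\Ph^\gamma)_\ast(\beta).
\]
In particular, if $\gamma \in \MCAs(C)$ then the first identity shows $\curv_\Ass(\gamma + \beta)$ equals the curvature of $\beta$ computed in $C^\gamma$, so $\beta \mapsto \gamma + \beta$ restricts to a bijection $\MCAs(C^\gamma) \xto{\iso} \MCAs(C)$ with inverse $x \mapsto x - \gamma$. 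Taking $C = A \tensor B$ and $\gamma = \al \tensor \unit_n$ and combining with Step 1 produces, for each $n$, the bijection $\sN_n(A^\al) \to \sN_n(A)$, $\beta \mapsto \al \tensor \unit_n + \beta$. Since the simplicial face and degeneracy operators of $\sNb$ are induced by the \emph{unital} dg algebra homomorphisms $d_i, s_j$ on $N^\ast(\Del^\bl)$ (Sec.\ \ref{sec:N}), which preserve units, the operation of adding $\al \tensor \unit_n$ in each dimension commutes with all of them; hence $\mathrm{Shift}_\al$ is a morphism of simplicial sets, and it is an isomorphism, with inverse given dimensionwise by $x \mapsto x - \al \tensor \unit_n$.

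\emph{Naturality square.} Finally, I would evaluate both composites on a simplex $\beta \in \sN_n(A^\al)$. Going right-then-down sends $\beta$ to $(\Ph_B)_\ast(\al \tensor \unit_n + \beta)$, which by the second identity of Step 2 (applied to the morphism $\Ph_B$ with $\gamma = \al \tensor \unit_n$) equals $(\Ph_B)_\ast(\al \tensor \unit_n) + (\Ph_B)^{\al \tensor \unit_n}_\ast(\beta) = \Ph_\ast(\al) \tensor \unit_n + (\Ph_B)^{\al \tensor \unit_n}_\ast(\beta)$; going down-then-right sends $\beta$, via the identification of Step 1, to $\Ph_\ast(\al) \tensor \unit_n + (\Ph_B)^{\al \tensor \unit_n}_\ast(\beta)$ as well. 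I expect the only real work to be the comparison of structure maps in Step 1 --- carefully tracking the shuffle product against the Koszul signs in \eqref{eq:Ainf-tensor1}; everything else is formal. Since this is the verbatim $A_\infty$-analogue of \cite[Lem.\ 4.3]{DR-shift}, one could alternatively transcribe that argument line by line.
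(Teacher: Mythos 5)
Your proof is correct and is essentially the argument the paper has in mind: the paper gives no proof of this lemma, deferring to the $L_\infty$-analogue \cite[Lem.\ 4.3]{DR-shift} with the remark that ``the proof is exactly the same,'' and your two steps (twisting commutes with tensoring by a unital dg algebra, and translation by a Maurer--Cartan element via the shuffle identity $(\gamma+\beta)^{\tensor m}=\sum_{k+n=m}\shpr{\gamma^{\tensor k}}{\beta^{\tensor n}}$) are precisely the worked-out transcription of that argument to the $A_\infty$ setting. The sign bookkeeping you flag in Step 1 is indeed harmless here, since $\al$, $\unit_n$, and all Maurer--Cartan elements sit in degree $0$ in the shifted conventions.
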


\section{Homotopical properties} \label{sec:cat-props}
Here we further develop the abstract homotopy theory for the category $\cAs$  in order to give a concise proof of the $\csainf$ analog of the Goldman-Millson Theorem in Sec.\ \ref{sec:GM}. 

\subsection{Finite products} \label{sec:products}
We denote the product of $\AdQ{}$ and $\AdQ{\prime}$ in $\cAs$  by $(A\times A', d_\times,Q_\times)$, where $(A\times A',d_\times)$ is the product of complete cochain complexes, and $Q_\times$ is the degree one codifferential on $\T(A \times A')$ whose structure maps are
\begin{equation} \label{eq:product1}
(Q_\times)^1_{k} \bigl( (x_1, x'_1), (x_2,x'_2), \ldots, (x_k,x'_k) \bigr):= \Bigl(\sQ_{k}(x_1,x_2,\ldots,x_k) ,\spQ_{k}(x'_1,x'_2,\ldots,x'_k) \Bigr).
\end{equation}
The usual projections $\pr \maps A \times A' \to A$, $\pr' \maps A \times A' \to A'$ in $\cChF$ lift to strict fibrations $\ppr$ and $\ppr'$ in $\cAs$. Given morphisms $\Ph \maps \AdQ{\prime \prime} \to \AdQ{}$ and $\Ph' \maps \AdQ{\prime \prime} \to \AdQ{\prime}$ in $\cAs$, the unique morphism 
$\ti{\Ph} \maps \AdQ{\prime \prime} \to (A\times A', d_\times,Q_\times)$ into the product has the structure maps
\[
\ti{\Ph}^1_k(a_1, a_2, \ldots,a_k)= \bigl( \Ph^1_k(a_1,\ldots,a_k), \Ph^{\prime 1}_k(a_1,\ldots,a_k) \bigr). 
\] 
Given morphisms $\Ph_1 \maps (A_1,d^{A_1},Q^{A_1}) \to (A'_1,d^{A'_1},Q^{A'_1})$ and $\Ph_2 \maps (A_2,d^{A_2},Q^{A_2}) \to (A'_2,d^{A'_2},Q^{A'_2})$, we denote by 
\begin{equation} \label{eq:product2}
\Ph_1 \times \Ph_2 \maps (A_1 \times A_2, d_\times,  Q_{\times}) \to (A'_1 \times A'_2, d'_\times,Q'_{\times})
\end{equation}
the unique morphism in $\cAs$ between the corresponding products that satisfies the usual universal property.
Also, for any $\AdQ{}$ and $\AdQ{\prime}$ in $\cAs$, the inclusion map 
of complete vector spaces $A \emb A \times A'$ extends to a strict $\infty$-morphism in $\cAs$
\begin{equation} \label{eq:Ainf-incl}
\AdQ{} \to (A \times A', d_\times,Q_\times).
\end{equation}

Finally, given $\AdQ{}$ and $\AdQ{\prime}$ in $\cAs$,
it follows from Eq.\ \ref{eq:product1} and the definition \eqref{eq:Ainf-tensor2} of the
functor  $(A \times A') \ctensor - \maps \dga \to \cAs$ that there is a canonical isomorphism of simplicial $\csainf$-algebras
\[
(A \times A') \tensor N^\ast(\Del^\bl) \cong  A\tensor N^\ast(\Del^\bl) \,  \times \,
A\tensor N^\ast(\Del^\bl) \quad \text{in $s\cAs$}.
\]
Via this isomorphism, it is easy to see that the nerve functor $\sNb \maps \cAs \to \Kan$ preserves products:
\begin{proposition} \label{prop:Nprod}
The map of simplicial sets
\[
\bigl( \sNb({\ppr}), \sNb(\ppr') \bigr)  \maps \sNb(A \times A') \to \sNb(A) \times \sNb(A')
\]
is an isomorphism.
\end{proposition}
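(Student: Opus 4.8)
The plan is to deduce the Proposition from the single fact that the Maurer--Cartan functor $\MCAs \maps \cAs \to \Set$ preserves finite products, and to establish that fact by inspecting formula \eqref{eq:product1}. Recall that $\sNb(-) = \MCAs\bigl(- \tensor N^\ast(\Del^\bl)\bigr)$, and that the display immediately preceding the statement exhibits a canonical isomorphism $(A \times A') \tensor N^\ast(\Del^\bl) \iso \bigl(A \tensor N^\ast(\Del^\bl)\bigr) \times \bigl(A' \tensor N^\ast(\Del^\bl)\bigr)$ in $s\cAs$, under which the maps of simplicial $\csainf$-algebras induced by the strict morphisms $\ppr$ and $\ppr'$ become the two product projections. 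Post-composing with $\MCAs$ then turns the Proposition into the assertion that $\MCAs$ carries this degreewise product of simplicial $\csainf$-algebras to the product of the two nerves; once we know that $\MCAs$ preserves finite products by a \emph{natural} isomorphism, the resulting levelwise bijections automatically commute with all face and degeneracy maps, and we are done.

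So the core step is to show $\MCAs(A \times A') \iso \MCAs(A) \times \MCAs(A')$, naturally in $A$ and $A'$. By \eqref{eq:product1}, the structure maps $(Q_\times)^1_k$ of $(A \times A', d_\times, Q_\times)$ are computed componentwise, hence so is the associated curvature: its value at $(a,a') \in (A \times A')^0 = A^0 \times A'^0$ is $\bigl(\curv_\Ass(a), \curv_{\Ass}'(a')\bigr)$, all series converging since $A$ and $A'$ are complete. Consequently $\MCAs(A \times A')$ and $\MCAs(A) \times \MCAs(A')$ coincide as subsets of $A^0 \times A'^0$, and because $\ppr$ and $\ppr'$ are strict, the remark following \eqref{eq:phistar} identifies $\bigl(\MCAs(\ppr), \MCAs(\ppr')\bigr)$ with this identity map of sets. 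Naturality in $A$ and $A'$ is clear, since the same identification is built from the functoriality of products.

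Finally I would reassemble the pieces: applying $\MCAs$ degreewise to the $s\cAs$-isomorphism above, and then the natural product decomposition just obtained, yields an isomorphism of simplicial sets $\sNb(A \times A') \iso \sNb(A) \times \sNb(A')$, which --- tracing through the identifications and using that $\ppr$, $\ppr'$ correspond to the product projections together with the functoriality of $-\tensor N^\ast(\Del^\bl)$ and of $\MCAs$ --- is precisely $\bigl(\sNb(\ppr), \sNb(\ppr')\bigr)$. I do not expect a genuine obstacle: the only honest computation is the componentwise evaluation of the curvature dictated by \eqref{eq:product1}, combined with the fact that strict $\infty$-morphisms act on Maurer--Cartan elements through their linear term; the remainder is the routine bookkeeping required to see that a levelwise natural bijection of simplicial sets is an isomorphism and to recognize it as the map named in the statement.
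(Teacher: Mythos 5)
Your proposal is correct and follows the same route as the paper: the paper likewise derives the statement from the canonical isomorphism $(A \times A') \tensor N^\ast(\Del^\bl) \cong \bigl(A\tensor N^\ast(\Del^\bl)\bigr) \times \bigl(A'\tensor N^\ast(\Del^\bl)\bigr)$ in $s\cAs$ together with the (there unstated but easy) fact that $\MCAs$ preserves finite products, which you verify by the componentwise curvature computation from Eq.\ \eqref{eq:product1}. You have merely written out the details the paper leaves as ``easy to see.''
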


\subsection{Decomposition and right inverses of acyclic fibrations} \label{sec:acyc}
We show that the domain of any acyclic fibration in $\cAs$ is isomorphic to a direct product. As a result, every acyclic fibration in $\cAs$ admits a right inverse.

\begin{proposition}\label{prop:min-mod}
Let $\Ph \maps (A, d, Q) \afib (A',d',Q')$ be an acyclic fibration in $\cAs$.
Let $(\ker \sPh, d)$ denote the kernel of the chain map $\sPh \maps (A, d) \to (A',d')$ considered as an abelian $\sainf$-algebra. Then there exists an $\infty$-morphism in $\cAs$
\[
\Psi \maps (A, d, Q) \to (\ker \sPh, d)
 \]
such that the morphism induced via the universal property of the product:
\begin{equation} \label{eq:min_mod_iso}
\bigl(\Ph,\Psi \bigr) \maps (A,d,Q) \to (A' \tim \ker \sPh, d_\tim, Q_\tim)
\end{equation}
is an isomorphism in $\cAs$.
\end{proposition}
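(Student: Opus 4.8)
The plan is to produce $\Psi$ by the standard obstruction-theoretic recursion for $\infty$-morphisms into an abelian $\sainf$-algebra, fed with the contraction data of the acyclic fibration $\Ph$, and then to recognize $(\Ph,\Psi)$ as an isomorphism because its linear term is invertible.

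First I would peel off the underlying cochain complexes. Write $\phi:=\sPh=\ctan(\Ph)\maps(A,d)\afib(A',d')$; this is an acyclic fibration in $\cCh$ by Definition~\ref{def:cAsmap}. Proposition~\ref{prop:acyc} then supplies a morphism $\ta\maps(A',d')\to(A,d)$ in $\cCh$ with $\phi\ta=\id_{A'}$, together with a degree $-1$ linear map $h\maps A\to\ker\phi$ such that $h(\cF_nA)\sse\ker\phi\cap\cF_nA$ for all $n\geq1$ and $\id_A-\ta\phi=dh+hd$. Since $\phi\ta=\id_{A'}$, the endomorphism $\ta\phi$ is a strict idempotent of $(A,d)$ whose image is a copy of $A'$ and whose kernel is exactly $\ker\phi$; hence
\[
\lam:=\bigl(\phi,\ \id_A-\ta\phi\bigr)\maps(A,d)\ \weq\ (A',d')\times(\ker\phi,d)
\]
is an isomorphism of complete filtered cochain complexes, with inverse $(a',k)\mapsto\ta(a')+k$, and $\id_A-\ta\phi$ is a chain map with image in $\ker\phi$ because $\ta$ and $\phi$ are chain maps.

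Next I would build $\Psi$ as an $\infty$-morphism from $\AdQ{}$ to the abelian $\sainf$-algebra $(\ker\phi,d)$, recursively in the arity of its structure maps. Set $\Psi^1_1:=\id_A-\ta\phi\maps A\to\ker\phi$. Given $\Psi^1_1,\dots,\Psi^1_{n-1}$, form
\[
R_n:=\sum_{k=1}^{n-1}\Psi^1_k\circ Q^k_n\ \maps\ A^{\tensor n}\longrightarrow\ker\phi ,
\]
a degree $1$ map landing in $\ker\phi$ because each $\Psi^1_k$ does, and set $\Psi^1_n:=h\circ R_n$. Each $\Psi^1_n$ preserves the filtration, since $Q$, $h$, and the lower $\Psi^1_k$ do. It then remains to verify the $\infty$-morphism equations $\sum_{k=1}^n\Psi^1_k Q^k_n=d\circ\Psi^1_n$ for every $n$; because the target is abelian, these are the only constraints. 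Isolating the $k=n$ summand $\Psi^1_n\circ Q^n_n$ (which is $\Psi^1_n$ post-composed with the internal differential of $A^{\tensor n}$) and using $\id_A-\ta\phi=dh+hd$ together with $\phi\circ R_n=0$, the equation in arity $n$ collapses to $h\circ\bigl(dR_n+R_n\circ Q^n_n\bigr)=0$; in turn $dR_n+R_n\circ Q^n_n=0$ is the familiar ``the obstruction is a cocycle'' identity, which follows from $Q^2=0$ and the equations already secured in lower arity, exactly as in the non-filtered case treated in \cite{LH}.

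Finally I would assemble the isomorphism. By the description of finite products in $\cAs$ recalled in \S\ref{sec:products}, the morphism $(\Ph,\Psi)\maps\AdQ{}\to(A'\times\ker\phi,d_\times,Q_\times)$ has structure maps $(\Ph,\Psi)^1_k=\bigl(\Ph^1_k,\Psi^1_k\bigr)$; in particular its linear term is $\bigl(\phi,\ \id_A-\ta\phi\bigr)=\lam$, an isomorphism of complete filtered cochain complexes by the first step. An $\infty$-morphism in $\cAs$ whose linear term is such an isomorphism is itself an isomorphism: its inverse is produced by the same arity recursion, with structure maps that are automatically filtration preserving because $\lam^{-1}$ and the structure maps of $(\Ph,\Psi)$ are -- this is the complete-filtered counterpart of the inversion of $\infty$-morphisms in \cite{LH}. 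Hence $(\Ph,\Psi)$ is an isomorphism in $\cAs$, as desired. The one genuine obstacle is the consistency of the recursion defining $\Psi$ -- namely the vanishing of $dR_n+R_n\circ Q^n_n$ and the correct handling of the Koszul signs in that computation; the rest is bookkeeping with filtrations.
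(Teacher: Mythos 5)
Your proposal is correct and follows essentially the same route as the paper: both extract the splitting $\tau$ and homotopy $h$ from Proposition \ref{prop:acyc}, take $\Psi^1_1 = \id_A - \tau\sPh$, use $h$ to manufacture the higher components of $\Psi$, and conclude by observing that the linear term $(\sPh,\Psi^1_1)$ is invertible in $\cVect$ with filtration-preserving inverse $(x',z)\mapsto \tau(x')+z$. The only divergence is in the formula for the higher components -- the paper uses the closed-form $\Psi^1_n := \Psi^1_1 \circ h \circ Q^1_n$ and verifies the $\infty$-morphism identities by a single direct computation from $Q^2=0$, whereas you define $\Psi^1_n := h\circ R_n$ recursively and verify via the standard ``obstruction is a cocycle'' argument -- and both constructions are valid.
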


\begin{proof} 
We follow the proof of the analogous statement \cite[Lem.\ 6.8]{CR} for complete $\sinf$-algebras in characteristic zero.  Since $\Phi$ is an acyclic fibration, the chain map $\sPh \maps (A, d) \to (A',d')$
is an acyclic fibration in $\cChF$. Statement 2 of Prop.\ \ref{prop:acyc} implies that 
there exists a filtered chain map $\tau \maps (A',d') \to (A,d)$ in $\cChF$ and a filtered chain homotopy $h \maps A \to A[-1]$ such that
\begin{equation} \label{eq:min_mod1}
\sPh \cc \tau = \id_{A'}, \quad \sPsi_1 = dh + hd.
\end{equation}
where  $\sPsi_1 \maps (A,d) \to (\ker \sPh , d)$ is the chain map $\sPsi_1 := \id_A - \tau \cc \sPh$.
For each $n \geq 2$, let $\sPsi_n \maps \T^n(A) \to \ker \sPh$ be the linear map
\begin{equation} \label{eq:min_mod1.1.2}
\sPsi_{n}:= \sPsi_1 \circ h \circ \sQ_n.
\end{equation}
Since $\ta$, $h$ and $\sQ_k$ are filtration preserving, so is $\sPsi \maps \T(A) \to \ker \sPh$. A direct calculation using \eqref{eq:min_mod1} and the fact that $Q^2=0$, shows
that for each $n \geq 1$
\[
d \vert_{\ker} \circ \sPsi_n = \sPsi_1 Q^1_n + \sum_{k=2}^n \sPsi_k Q^k_n.
\]
Hence $\sPsi$ is an $\infty$-morphism in $\cAs$.
Finally, we note that the linear map 
\[
\tha \maps  A' \tim \ker \sPh \to A, \quad \tha(x',z):= \ta(x') + z
\]
is filtration preserving and inverse to $(\sPh,\sPsi_1)$ in $\cVect$. Hence, 
$(\Phi, \Psi)$ is an isomorphism in $\cAs$.
\end{proof}

The above proposition implies that every acyclic fibration can be ``strictified''.

\begin{corollary}\label{cor:min-mod}
Let $\Ph \maps (A, d, Q) \afib (A',d',Q')$ be an acyclic fibration in $\cAs$.
Then, in the notation of Prop. \ref{prop:acyc}, the following diagram of acyclic fibrations commutes 
\begin{equation} \label{diag:acyc-strict}
\begin{tikzdiag}{2}{4}
{
(A, d, Q) \& (A' \tim \ker \sPh, d_\tim, Q_\tim)\\
(A',d',Q') \& (A',d',Q')\\
};

\path[->,font=\scriptsize]
(m-1-1) edge node[auto] {$(\Phi,\Psi)$} node[below]{$\cong$} (m-1-2)

(m-2-1) edge node[auto] {$\id_{A'}$} (m-2-2)
;
\path[->>,font=\scriptsize]
(m-1-1) edge node[auto,swap] {$\Phi$} node[sloped,above]{$\sim$} (m-2-1)
(m-1-2) edge node[auto] {$\Pr_{A'}$} node[sloped,below]{$\sim$} (m-2-2)
;
\end{tikzdiag}
\end{equation}
\end{corollary}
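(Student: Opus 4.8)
The plan is to deduce the corollary almost immediately from Proposition \ref{prop:min-mod}, treating it as little more than unwinding the construction there. First I would invoke Proposition \ref{prop:min-mod} to produce the $\infty$-morphism $\Psi \maps (A,d,Q) \to (\ker \sPh, d)$ and the resulting isomorphism $(\Ph,\Psi) \maps (A,d,Q) \xto{\cong} (A' \tim \ker \sPh, d_\tim, Q_\tim)$ in $\cAs$. This supplies the top horizontal arrow of the square. The bottom arrow is $\id_{A'}$, and the right vertical arrow is the strict projection $\Pr_{A'} \maps (A' \tim \ker\sPh, d_\tim, Q_\tim) \to (A',d',Q')$, which is a strict fibration by the discussion in Sec.\ \ref{sec:products}.

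Next I would check commutativity of the square. By the universal property of the product used to define $(\Ph,\Psi)$, the composite $\Pr_{A'} \cc (\Ph,\Psi)$ is precisely $\Ph$, so the square commutes on the nose; there is nothing to verify beyond citing the defining property of $(\Ph,\Psi)$.

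It then remains to justify that all four arrows are acyclic fibrations, as claimed by the label of the diagram. The left arrow $\Ph$ is an acyclic fibration by hypothesis. The bottom arrow $\id_{A'}$ is trivially an acyclic fibration. Since $(\Ph,\Psi)$ is an isomorphism in $\cAs$, its tangent map $\ctan(\Ph,\Psi)$ is an isomorphism of complete filtered cochain complexes, hence an acyclic fibration, so the top arrow qualifies as well. For the right arrow $\Pr_{A'}$: its tangent map is the projection $A' \tim \ker\sPh \to A'$, which restricts on each filtration piece to a split epimorphism with kernel $\cF_n(\ker \sPh)$, hence a fibration; and it is a weak equivalence by the 2-of-3 property (Sec.\ \ref{sec:cCh}) applied to the commuting triangle of tangent maps, since $\ctan(\Ph)$ and $\ctan(\Ph,\Psi)$ are already known to be weak equivalences. (Alternatively one sees directly that $\ker \sPh$ is acyclic filtration-wise, since $\sPh$ is a filtered quasi-isomorphism and a filtration-wise epimorphism with a filtered contraction $h$ onto $\ker\sPh$ as in Prop.\ \ref{prop:acyc}.)

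There is essentially no obstacle here: the only mild point is bookkeeping the claim ``acyclic fibration'' on the right-hand arrow, for which the cleanest route is the 2-of-3 property for weak equivalences in $\cChF$ combined with the observation that a strict product projection is always a fibration in $\cAs$. Everything else is a formal consequence of Proposition \ref{prop:min-mod} and the universal property of the product established in Sec.\ \ref{sec:products}.
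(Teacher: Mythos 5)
Your proposal is correct and follows essentially the same route as the paper: commutativity holds by the universal property defining $(\Ph,\Psi)$, and the right-hand arrow $\Pr_{A'}$ is seen to be an acyclic fibration by applying the tangent functor and using the 2-of-3 property for weak equivalences in $\cChF$, since the other three tangent maps are already weak equivalences. The extra details you supply (that the strict projection is a fibration, and the alternative argument via acyclicity of $\ker\sPh$) are consistent with, and only slightly more explicit than, the paper's own proof.
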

\begin{proof}
The diagram commutes by construction. Applying the tangent functor $\ctan \maps \cAs \to \cChF$ gives a commutative diagram in which the top, bottom, and left hand side morphisms are all weak equivalences.
Hence, $\ctan(\Pr_A')$ is a weak equivalence in $\cChF$ as well, and so, by definition, $\Pr'_A$ is a strict acyclic fibration in $\cAs$.
\end{proof} 

Furthermore, Prop.\ \ref{prop:min-mod} implies that  every acyclic fibration in $\cAs$ admits a right inverse.
\begin{corollary} \label{cor:acyc-retract}
Let $\Ph \maps (A, d, Q) \afib (A',d',Q')$ be an acyclic fibration in $\cAs$. Then there exists
a morphism $\chi \maps (A',d',Q') \to (A,d,Q)$ in $\cAs$ such that $\Ph\circ \chi = \id_{A'}$.
\end{corollary}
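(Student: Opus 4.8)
The plan is to read the section off directly from the structural result of Proposition~\ref{prop:min-mod}, the point being that a split epimorphism onto a direct factor always admits a section. First I would apply Proposition~\ref{prop:min-mod} to $\Ph$ to obtain the $\infty$-morphism $\Psi \maps (A,d,Q) \to (\ker\sPh, d)$ for which
\[
(\Ph,\Psi) \maps (A,d,Q) \xto{\cong} (A' \tim \ker\sPh, d_\tim, Q_\tim)
\]
is an isomorphism in $\cAs$. By the universal property of the product --- equivalently, by the commutative diagram of Corollary~\ref{cor:min-mod} --- the projection $\Pr_{A'} \maps A' \tim \ker\sPh \to A'$ onto the first factor satisfies $\Pr_{A'} \cc (\Ph,\Psi) = \Ph$, so composing on the right with the inverse isomorphism yields $\Ph \cc (\Ph,\Psi)^{-1} = \Pr_{A'}$.

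Next I would exhibit the evident section of $\Pr_{A'}$: by \eqref{eq:Ainf-incl}, the inclusion of complete cochain complexes $A' \emb A' \tim \ker\sPh$ extends to a strict $\infty$-morphism $\iota \maps (A',d',Q') \to (A' \tim \ker\sPh, d_\tim, Q_\tim)$ in $\cAs$, and clearly $\Pr_{A'} \cc \iota = \id_{A'}$.

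Finally I would set $\chi := (\Ph,\Psi)^{-1} \cc \iota$, which is a morphism in $\cAs$ as a composite of morphisms in $\cAs$. Then
\[
\Ph \cc \chi \;=\; \bigl(\Ph \cc (\Ph,\Psi)^{-1}\bigr) \cc \iota \;=\; \Pr_{A'} \cc \iota \;=\; \id_{A'},
\]
so $\chi$ is the desired right inverse. I do not anticipate any genuine obstacle here: all of the substance has already been packaged into Proposition~\ref{prop:min-mod} (whose proof in turn rests on the filtered contraction data of Proposition~\ref{prop:acyc}), and what remains is the purely formal argument above.
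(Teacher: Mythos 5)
Your proof is correct and is essentially identical to the paper's own argument: the paper also sets $\chi := (\Ph,\Psi)^{-1} \cc i_{A'}$ using the strict inclusion \eqref{eq:Ainf-incl} into the product decomposition furnished by Proposition \ref{prop:min-mod}. No issues.
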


\begin{proof}
Using the notation from Prop.\ \ref{prop:min-mod}, recall from Sec.\ \ref{sec:products} that the inclusion $i_{A'} \maps (A',d',Q') \to (A' \tim \ker \sPh, d_\tim, Q_\tim)$ is a morphism in $\cAs$. Let $\chi:= (\Phi,\Psi)^{-1} \cc i_{A'}$. Then $\Phi \cc \chi = \id_{A'}$ as desired.
\end{proof}

Note that a right inverse to an acyclic fibration in $\cAs$ is necessarily a weak equivalence, since weak equivalences in $\cChF$ have the 2 of 3 property.

\subsection{Pullbacks of fibrations and factorizations} \label{sec:pback}
Let  $\Ph=\sPh \maps (A,d,Q) \fib (A'',d'',Q'')$ be a strict fibration in $\cAs$, and $\Theta \maps (A',d',Q') \to (A'',d'',Q'')$ is an arbitrary morphism in $\cAs$. The pullback of the associated diagram  $(A',d') \xto{\Tha^1_1} (A'',d'') \xleftarrow{\sPh} (A,d)$ between tangent complexes in $\cChF$ exists. We recall the explicit description from \cite[\S 4.3.1]{CR}.
Choose a filtration-preserving linear splitting $\si \maps A'' \to A$ of the surjection  
$\sPh \maps A \to A''$ as a morphism in $\cVectF$. Let $h \maps A' \times \ker \sPh \to A' \times A$, and $j \maps A' \times A \to A' \times \ker \sPh$ be the maps in $\cVectF$.
\begin{equation} \label{eq:hj}
h(a',a):=(a', \si \sThe_1(a') + a), \qquad j(a',a):= \bigl(a', a-\si \sThe_1(a') \bigr).
\end{equation}
Then the diagram in $\cChF$ 
\begin{equation} \label{diag:pb-Ch}
\begin{tikzpicture}[descr/.style={fill=white,inner sep=2.5pt},baseline=(current  bounding  box.center)]
\matrix (m) [matrix of math nodes, row sep=2em,column sep=3em,
  ampersand replacement=\&]
  {  
\bigl( \ti{A}, \ti{d} \bigr) \& (A, d) \\
(A',d') \& (A'', d'') \\
}
; 
  \path[->,font=\scriptsize] 
   (m-1-1) edge node[auto] {$\pr \circ h $} (m-1-2)
   (m-1-1) edge node[auto,swap] {$\pr' \circ h$} (m-2-1)
   (m-1-2) edge node[auto] {$\sPh$} (m-2-2)
   (m-2-1) edge node[auto] {$\Theta^1_1$} (m-2-2)
  ;

  \begin{scope}[shift=($(m-1-1)!.4!(m-2-2)$)]
  \draw +(-0.25,0) -- +(0,0)  -- +(0,0.25);
  \end{scope}
\end{tikzpicture}
\end{equation}
where $\ti{A}= A' \tim \ker \sPh$, and $\ti{d}= j\circ (d' \tim d) \circ h$ is a pullback square. Clearly $\pr' \circ h$ is a fibration in $\cChF$. Furthermore, if $\sPh$ is also a weak equivalence, then it is clear that $\pr' \circ h$ is as well.

The $\cinf$-analog of the next result was proved in \cite[Prop.\ 6.5]{CR}. The same arguments hold in the present context, and so we delay the proof until Appendix \ref{sec:apndx}.  

\begin{proposition} \label{prop:strict-pb}
Let $\Ph=\sPh \maps (A,d,Q) \fib (A'',d'',Q'')$ be a strict fibration/acyclic fibration in $\cAs$ 
and $\Theta \maps (A',d',Q') \to (A'',d'',Q'')$ an arbitrary morphism in $\cAs$. Then the pullback diagram \eqref{diag:pb-Ch} in $\cChF$ lifts through the tangent functor $\ctan \maps \cAs \to \cChF$
to a pullback diagram 
\begin{equation} \label{diag:strict_pullback3}
\begin{tikzpicture}[descr/.style={fill=white,inner sep=2.5pt},baseline=(current  bounding  box.center)]
\matrix (m) [matrix of math nodes, row sep=2em,column sep=3em,
  ampersand replacement=\&]
  {  
(\ti{A},\ti{d},\ti{Q})  \& \bigl (  A, d, Q  ) \\
\bigl   (A', d', Q') \& \bigl (  A'', d'', Q'' ) \\
}; 
  \path[->,font=\scriptsize] 
   (m-1-1) edge node[auto] {$\ppr H$} (m-1-2)
   (m-1-1) edge node[auto,swap] {$\ppr'H$} (m-2-1)
   (m-1-2) edge node[auto] {$\Ph$} (m-2-2)
   (m-2-1) edge node[auto] {$\Theta$} (m-2-2)
  ;

  \begin{scope}[shift=($(m-1-1)!.4!(m-2-2)$)]
  \draw +(-0.25,0) -- +(0,0)  -- +(0,0.25);
  \end{scope}
\end{tikzpicture}
\end{equation}
in the category in $\cAs$. Moreover, the morphism $\ppr' H \maps( \ti{A}, \ti{d},\ti{Q}) \to (A',d',Q')$ is a fibration/acyclic fibration in $\cAs$.
\end{proposition}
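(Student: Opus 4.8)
The plan is to transcribe, with essentially no change, the proof of the $\cinf$-analogue \cite[Prop.\ 6.5]{CR}; the only thing that really needs checking is that the graded skew-symmetry of the $L_\infty$-brackets is never used in that argument. Every identity there is expressed through the deconcatenation coproduct on $\Tc(-)$, the linear splitting $\si$ of $\sPh$, and the shear maps $h,j$ of \eqref{eq:hj}, none of which is sensitive to any symmetry, so the formulas and verifications carry over verbatim to $\csainf$-algebras. It is worth recording where the hypotheses on $\Ph$ are used: fibrancy guarantees, via Lem.\ \ref{lem:fepi}, that $\sPh$ admits a filtration-preserving linear splitting $\si \maps A'' \to A$, and strictness supplies the identities $\sPh\,\si = \id_{A''}$ and $\sPh\,Q^1_k = (Q'')^1_k\,(\sPh)^{\tensor k}$; it is precisely these that make the pullback exist \emph{inside $\cAs$}, which is not closed under general limits.

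The first step is to upgrade the cochain-level pullback $\ti{A} = A' \tim \ker\sPh$ of \eqref{diag:pb-Ch} to a $\csainf$-algebra $(\ti{A},\ti{d},\ti{Q})$, while simultaneously producing the two legs of \eqref{diag:strict_pullback3}. One demands that the leg $\ppr' H \maps \ti{A} \to A'$ be \emph{strict}, with underlying chain map $\pr'\circ h \maps (a',z) \mapsto a'$, and that the leg $\ppr H \maps \ti{A} \to A$ be an $\infty$-morphism with linear term $\pr\circ h \maps (a',z) \mapsto \si\,\Theta^1_1(a') + z$ and higher components built from $\si$, the higher components $\Theta^1_{\geq 2}$ of $\Theta$, and the shears. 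Strictness of $\ppr' H$ forces the $A'$-coordinate of $\ti{Q}^1_k$ to be $(Q')^1_k(a'_1,\ldots,a'_k)$; its $\ker\sPh$-coordinate is then the unique correction term making $\ppr H$ into an $\infty$-morphism, and the strict-morphism identities for $\sPh$ are exactly what force this term to land in $\ker\sPh$. For $k=1$ one recovers $\ti{d}$, so applying $\ctan$ returns \eqref{diag:pb-Ch} on the nose. One then verifies $\ti{Q}^2 = 0$ and that both legs are honest $\infty$-morphisms by direct computation, using $Q^2 = 0$, $(Q')^2 = 0$, the chain-map identity $\Theta^1_1 d' = d'' \Theta^1_1$, the $\infty$-morphism relations for $\Theta$, and the two strict identities above; the outer square commutes by construction.

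The second step is the universal property, now internal to $\cAs$. Given $\infty$-morphisms $f \maps (B,d^B,Q^B) \to (A,d,Q)$ and $g \maps (B,d^B,Q^B) \to (A',d',Q')$ with $\Ph\,f = \Theta\,g$, one writes down the candidate lift into $\ti{A}$ whose $A'$-component is $g$ and whose $\ker\sPh$-component is assembled from $f$, $g$, $j$ and $\si$ (at linear order it is $j$ applied to $(g^1_1,f^1_1)$), and checks — again by a deconcatenation computation, using $\Ph f = \Theta g$ — that it is the unique $\infty$-morphism composing with the two legs to $f$ and $g$. Finally, that $\ppr' H$ is a fibration, resp.\ acyclic fibration, is immediate: $\ctan(\ppr' H) = \pr'\circ h$ is a fibration in $\cChF$, and is a weak equivalence whenever $\sPh$ is one (as noted just before the statement), so the claim follows from Def.\ \ref{def:cAsmap}. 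I expect the main obstacle to be the middle step — manufacturing the $\ker\sPh$-valued correction terms in $\ti{Q}$ and grinding out $\ti{Q}^2 = 0$ along with the $\infty$-morphism relations for the two legs — and, secondarily, confirming that the universal property, which is automatic on tangent complexes, genuinely refines to $\infty$-morphisms; both are carried out for $\cinf$-algebras in \cite{CR} and transcribe without modification.
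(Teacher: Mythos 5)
Your proposal is correct and follows essentially the same route as the paper, which likewise transcribes \cite[Prop.\ 6.5]{CR} verbatim: the paper packages your ``correction terms'' by extending the shears $h,j$ to mutually inverse coalgebra automorphisms $H,J$ of $\T(A'\times A)$ with higher components $(0,\pm\si\Theta^1_k)$, sets $\ti{Q}:=J\cc Q_\times\cc H\vert_{\T(\ti{A})}$ (which makes $\ti{Q}^2=0$ automatic and yields your strict leg $\ppr'H$ and non-strict leg $\ppr H$), and realizes the universal lift as $J\cc(\Psi\times\Psi')$. Your identification of where fibrancy (the filtered splitting $\si$ via Lem.\ \ref{lem:fepi}) and strictness of $\Ph$ enter matches the paper's argument.
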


\subsubsection{Factoring $\infty$-morphisms in $\cAs$} \label{sec:factor}
We use Prop.\ \ref{prop:strict-pb} to show that any morphism in $\cAs$ can be factored into a weak equivalence followed by a fibration. Recall from Prop.\ \ref{prop:Ch-pobj} that tensoring 
any object in $\cChF$ with the unital dg algebra  $(N^\ast(\Del^1), \cupp, \unit)$ provides a functorial factorization of the diagonal map into a composition of a weak equivalence with a fibration. Combining this with the functor $ A \ctensor - \maps \dga \to \cAs$  (Sec.\ \ref{sec:Ainf-tensor}), we have for any $\csainf$-algebra $\AdQ{}$ a functorial factorization 
\[
\AdQ{} \xto{\id_A \tensor \unit } (A \tensor N^\ast(\Del^1), d_{N}, Q_{N}) \xto{\id_A \tensor \ro} (A \tim A, d_{\tim}, Q_\tim)
\] 
of the diagonal $\AdQ{} \to \AdQ{} \times \AdQ{}$ in the category $\cAs$.
Above, as in Example \ref{ex:interval}, $\unit \maps \FF \to N^\ast(\Del^1)$ is the unit map, and $\ro \maps N^{\ast}(\Del^1) \to \FF \times \FF$ is the product $\ro=(\ev_0,\ev_1)$ of the evaluation maps on the two vertices of $\Del^1$. In the category $\cChF$, $\unit$ is weak equivalence, $\ro$ is a fibration, and both $\ev_0$ and $\ev_1$ are acyclic fibrations. Hence, in the category $\cAs$, the morphism $\id_A \tensor \unit$ is a strict weak equivalence, $\id_A \tensor \ro$ is 
is a strict fibration, and both $\id_A \tensor \ev_0$ and $\id_A \tensor \ev_1$ are strict acyclic fibrations. 

The main result of this subsection is an instance of Brown's Factorization Lemma \cite{Brown:1973}:
\begin{proposition}\label{prop:Ainf-factor}
Let $\Tha \maps \AdQ{\prime} \to \AdQ{\prime\prime}$ be a morphism in $\cAs$.
\begin{enumerate}
\item There exists a factorization $\Tha = P_{\Tha} \cc \Psi$ in  $\cAs$ such that $P_{\Tha}$ is a fibration, and $\Psi$ is a right inverse to an acyclic fibration (hence a weak equivalence).

\item $\Tha$ is a weak equivalence if and only if the above fibration $P_\Tha$ is an acyclic fibration.
\end{enumerate}
\end{proposition}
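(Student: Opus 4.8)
The plan is to apply the standard Brown factorization construction, using the interval object $N^\ast(\Del^1)$ together with the pullback machinery of Prop.\ \ref{prop:strict-pb}. First I would form the mapping-path object for $\Tha \maps \AdQ{\prime} \to \AdQ{\prime\prime}$. Using the functorial factorization of the diagonal from Sec.\ \ref{sec:factor}, namely
\[
\AdQ{\prime\prime} \xto{\id \tensor \unit} (A'' \tensor N^\ast(\Del^1), d_N, Q_N) \xto{\id \tensor \ro} (A'' \tim A'', d_\tim, Q_\tim),
\]
I pull back the strict fibration $\id_{A''} \tensor \ev_1 \maps (A'' \tensor N^\ast(\Del^1), d_N, Q_N) \fib \AdQ{\prime\prime}$ (which is in fact a strict acyclic fibration, as noted in Sec.\ \ref{sec:factor}) along $\Tha$. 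By Prop.\ \ref{prop:strict-pb} this pullback exists in $\cAs$, and the projection to $\AdQ{\prime}$ is an acyclic fibration in $\cAs$. Call the pullback $\Path(\Tha)$ and let $p \maps \Path(\Tha) \fib \AdQ{\prime}$ denote this acyclic fibration, so $p$ admits a right inverse by Cor.\ \ref{cor:acyc-retract}.

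Next I would exhibit the factorization. The map $\Tha$ factors through $\Path(\Tha)$: the morphism $\Psi \maps \AdQ{\prime} \to \Path(\Tha)$ is induced by the universal property of the pullback from the two maps $\id_{A'} \maps \AdQ{\prime} \to \AdQ{\prime}$ and $(\id_{A''} \tensor \unit) \cc \Tha \maps \AdQ{\prime} \to (A'' \tensor N^\ast(\Del^1), d_N, Q_N)$ (these agree after composing into $\AdQ{\prime\prime}$, since $\ev_1 \cc \unit = \id_{\FF}$). The composite $p \cc \Psi = \id_{A'}$, so $\Psi$ is a section of the acyclic fibration $p$, hence a weak equivalence by the 2-out-of-3 property in $\cChF$. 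The fibration $P_\Tha \maps \Path(\Tha) \to \AdQ{\prime\prime}$ is defined as the composite of $\ppr H$ (the other pullback projection, landing in $A'' \tensor N^\ast(\Del^1)$) with $\id_{A''} \tensor \ev_0 \maps (A'' \tensor N^\ast(\Del^1), d_N, Q_N) \fib \AdQ{\prime\prime}$. Since $\ev_0$ is a strict fibration in $\cAs$, to see that $P_\Tha$ is a fibration I need $\ppr H$ to be a fibration; but in the present situation $\ppr H$ is a base change of $\id_{A''} \tensor \ro$ (or one argues directly that the pullback of the strict fibration $\id \tensor \ev_1$ only affects the $\ev_1$-coordinate, leaving the $\ev_0$-leg a fibration), and Prop.\ \ref{prop:strict-pb} guarantees fibrations are stable under the relevant pullback. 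One then checks $P_\Tha \cc \Psi = \Tha$ by unwinding the two pullback projections and using $\ev_0 \cc \unit = \id_{\FF}$. This gives part (1).

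For part (2): if $P_\Tha$ is an acyclic fibration, then since $\Psi$ is a weak equivalence and $\Tha = P_\Tha \cc \Psi$, the 2-out-of-3 property of weak equivalences in $\cChF$ (applied after $\ctan$) shows $\Tha$ is a weak equivalence. Conversely, if $\Tha$ is a weak equivalence, then again by 2-out-of-3 $P_\Tha$ is a weak equivalence; being already a fibration, it is an acyclic fibration. The main obstacle I anticipate is the bookkeeping in part (1) showing that the leg $P_\Tha$ really is a fibration — one must track carefully which coordinate the pullback along $\id \tensor \ev_1$ modifies, and confirm via Prop.\ \ref{prop:strict-pb} that composing the surviving fibration $\ppr H$ with the strict acyclic fibration $\id \tensor \ev_0$ yields a genuine fibration in $\cAs$. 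Everything else is a formal consequence of the 2-out-of-3 property and the results already established in Sec.\ \ref{sec:acyc} and Sec.\ \ref{sec:pback}.
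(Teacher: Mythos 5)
Your construction is the same as the paper's (you merely swap the roles of $\ev_0$ and $\ev_1$, which is immaterial), and part (2) and the construction of $\Psi$ are fine. But the one step you yourself flag as the main obstacle --- that $P_\Tha$ is a fibration --- is exactly where your proposed justifications break down. First, $\ppr H$ is \emph{not} a fibration in general, and Prop.~\ref{prop:strict-pb} does not claim it is: that proposition only asserts that the leg $\ppr' H$ lying over the arbitrary morphism $\Tha$ is a fibration, while $\ppr H$ is the base change of $\Tha$ along the evaluation map. Concretely, $\ppr H(a',x)=\si\sThe_1(a')+x$ with $x\in\ker(\id_{A''}\tensor\ev_1)$; applying $\id_{A''}\tensor\ev_1$ shows that surjectivity of $\ppr H$ on $\cF_n$ forces surjectivity of $\sThe_1\vert_{\cF_nA'}$, i.e.\ it holds only when $\Tha$ is itself a fibration. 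Second, $\ppr H$ is not a base change of $\id_{A''}\tensor\ro$; the map that is a base change of $\id_{A''}\tensor\ro$ (along $\Tha\times\id_{A''}$) is the pair $(p,P_\Tha)$ into $A'\times A''$, of which $\ppr H$ is the \emph{other} projection. That categorical route can be made to work --- compose the base change of $\ro$ with the product projection, both fibrations --- but it needs this identification spelled out, which you do not do.

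What actually closes the gap, and what the paper does, is a direct verification that the \emph{composite} $\ctan(P_\Tha)\vert_{\cF_n\ti A}$ is surjective even though $\ppr H$ is not: in your conventions, for $a''\in\cF_nA''$ the element $a''\tensor\vph_0$ lies in $\cF_n\ker(\id_{A''}\tensor\ev_1)$ because $\ev_1(\vph_0)=0$, so $(0,a''\tensor\vph_0)\in\cF_n\ti A$, and $(\id_{A''}\tensor\ev_0)\bigl(\ppr H(0,a''\tensor\vph_0)\bigr)=(\id_{A''}\tensor\ev_0)(a''\tensor\vph_0)=a''$. With this two-line computation inserted in place of the claims about $\ppr H$, your argument coincides with the paper's proof.
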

\begin{proof}
(1) Use Prop.\ \ref{prop:strict-pb} to take the pullback of the strict acyclic fibration $\id_A \tensor \ev_0$ along $\Tha$. Since $\ev_0 \cc \unit = \id_\FF$, we obtain a commutative diagram 
\[
\begin{tikzpicture}[descr/.style={fill=white,inner sep=2.5pt},baseline=(current  bounding  box.center)]
\matrix (m) [matrix of math nodes, row sep=2em,column sep=3em,
  ampersand replacement=\&]
  {  
(A',d',Q') \& (\ti{A}, \ti{d},\ti{Q})  \& (A'' \tensor \NI, d''_N, Q''_N) \\
\&   (A',d',Q') \&  (A'',d'',Q'') \\
}; 
 \path[->,font=\scriptsize] 
(m-1-1) edge [bend left=20] node[auto]{$(\id_{A''} \tensor \unit)\cc \Tha$} (m-1-3)
(m-1-1) edge [bend right=20] node[auto,swap]{$\id_{A'}$} (m-2-2)
(m-1-2) edge node[auto] {$\ppr H$}  (m-1-3)
  (m-2-2) edge node[auto] {$\Tha$} (m-2-3)
 ;
\path[->>,font=\scriptsize] 
(m-1-2) edge node[auto,swap] {$\ppr'H$} node[sloped, above] {$\sim$} (m-2-2)
(m-1-3) edge node[auto] {$\id_{A''} \tensor \ev_0$} node[sloped, below] {$\sim$} (m-2-3)
;

  \begin{scope}[shift=($(m-1-2)!.4!(m-2-3)$)]
  \draw +(-0.25,0) -- +(0,0)  -- +(0,0.25);
  \end{scope}
\end{tikzpicture}
\]
Via the universal property, there exists a  morphism $\Psi \maps \AdQ{\prime} \to (\ti{A},\ti{d},\ti{Q})$ into the pullback making the necessary diagrams commute. Proposition \ref{prop:strict-pb} implies that $\ppr'H$ is a weak equivalence, and $\ppr'H \cc \Psi =\id_{A'}$. Hence $\Psi$ is a weak equivalence, as desired, by the 2 of 3 property. 

Next, let $P_{\Tha} \maps (\ti{A},\ti{d},\ti{Q}) \to \AdQ{\prime \prime}$ be the morphism $P_{\Tha}:= 
(\id_{A''} \tensor \ev_1) \cc \ppr H$. Since  $\ev_1 \cc \unit = \id_\FF$, the equality  $P_{\Tha} \cc \Psi = \Tha$ holds. To show that $P_{\Tha}$ is a fibration, it suffices to verify that the linear map $\ctan(P_{\Tha}) \vert_{\cF_n \ti{A}} \maps \cF_n \ti{A} \to \cF_n A^{\prime \prime}$ is degree-wise surjective for all $n \geq 1$. Recall from diagram \eqref{diag:pb-Ch}
that $\ti{A}:= A' \times \ker(\id_{A''} \tensor \ev_0)$. We have $\ctan(P_{\Tha}) = 
(\id_{A''} \tensor \ev_1) \cc (\pr \cc h)$. Here $(\pr \cc h) \maps  A' \times \ker(\id_{A''} \tensor\, \ev_0) \to A'' \tensor \NI$ is the map defined using Eq.\ \ref{eq:hj}: 
\[
(\pr \cc h)(a',x) = \pr(a', \si \sThe_1(a') + x)=\si \sThe_1(a') + x.
\]
where $\si$ is a splitting of the surjection $\id_{A''} \tensor \ev_0$. In degree zero, $N(\Del^1)$ is the 2 dimensional vector space $\FF \vphi_0 \dsum \FF \vphi_1$. Here $\vphi_0$ and $\vphi_1$ are the basis elements defined in Sec.\ \ref{sec:basis}. The evaluation maps satisfy the equalities $\ev_0(\vphi_1) = 0$, and $\ev_1(\vphi_1) = 1$. Therefore, $\cF_nA'' \tensor \FF \vphi_1 \sse 
\cF_n\ker(\id_{A''} \tensor\, \ev_0)$, and so for any $a'' \in \cF_nA''$ we have the equalities
\[
(\id_{A''} \tensor \ev_1) \cc (\pr \cc h)\bigl(0, a'' \tensor \vph_1\bigr) = 
(\id_{A''} \tensor \ev_1) \bigl( a'' \tensor \vph_1\bigr)= a''.
\]
Hence $\ctan(P_{\Tha})\vert_{\cF_n \ti{A}} $ is surjective.

(2) Since $\Psi$ is a weak equivalence, and $\Tha = P_{\Tha} \Psi$, if $\Tha$ is a weak equivalence, then $P_{\Tha}$ is a weak equivalence by the 2 of 3 property.
\end{proof}

\section{A concise proof of the Goldman-Millson Theorem} \label{sec:GM}
We prove an $A_\infty$ analog of the celebrated Goldman-Millson Theorem \cite[Thm.\ 2.4]{GM}. The advantage of the categorical machinery developed in the previous section is that it leads us to a relatively simple proof.

\begin{theorem}\label{thm:GM}
Let $\Tha \maps \AdQ{} \to \AdQ{\prime}$ be a weak equivalence in $\cAs$. Then
\[
\sNb (\Tha) \maps \sNb(A) \to \sNb(A')
\]
is a homotopy equivalence of simplicial sets.
\end{theorem}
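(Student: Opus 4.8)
The plan is to run a version of K.\ Brown's lemma \cite{Brown:1973}, fed by the homotopical input of Secs.\ \ref{sec:nerve}--\ref{sec:cat-props}, and to funnel everything into the single assertion that $\sNb$ carries \emph{strict} acyclic fibrations in $\cAs$ to \emph{trivial} Kan fibrations of simplicial sets. First I would factor $\Tha=P_\Tha\circ\Psi$ using Prop.\ \ref{prop:Ainf-factor}, with $P_\Tha$ a fibration and $\Psi$ a right inverse to an acyclic fibration; since $\Tha$ is a weak equivalence, Prop.\ \ref{prop:Ainf-factor}(2) makes $P_\Tha$ an acyclic fibration as well. Because weak homotopy equivalences of simplicial sets satisfy two-out-of-three, and all the simplicial sets here are Kan (Thm.\ \ref{thm:Kan}) — so weak homotopy equivalences between them are genuine homotopy equivalences — it then suffices to prove that $\sNb$ sends acyclic fibrations to weak equivalences: granting this, $\sNb(P_\Tha)$ is a weak equivalence, and $\sNb(\Psi)$ is one too since it is a right inverse to $\sNb$ of an acyclic fibration, so $\sNb(\Tha)=\sNb(P_\Tha)\sNb(\Psi)$ is a weak equivalence. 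Next, given an arbitrary acyclic fibration $\Ph\maps(A,d,Q)\afib(A',d',Q')$, Cor.\ \ref{cor:min-mod} puts it into a commutative square whose horizontal legs are isomorphisms in $\cAs$ and whose other vertical leg is the strict acyclic fibration $\Pr_{A'}\maps(A'\times\ker\sPh,d_\times,Q_\times)\to(A',d',Q')$; applying $\sNb$ identifies $\sNb(\Ph)$ with $\sNb(\Pr_{A'})$ up to isomorphism, so only strict acyclic fibrations need to be treated.

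For the main step, let $\Ph=\sPh\maps(A,d,Q)\afib(A',d',Q')$ be strict. By Thm.\ \ref{thm:Kan} the map $\sNb(\Ph)$ is a Kan fibration, so I only have to check the right lifting property against the boundary inclusions $\pa\Del^n\hookrightarrow\Del^n$ for all $n\geq 0$; a map with this property is a trivial fibration, hence a homotopy equivalence. The pivotal point is that $N_\ast$ turns the standard colimit presentation of $\pa\Del^n$ into a colimit, while $A\tensor(-)$ and $\MCAs(-)$ both preserve the finite limits in question, so that $\Hom_{\sSet}(\pa\Del^n,\sNb(A))$ is naturally identified with $\MCAs\bigl(A\tensor N^\ast(\pa\Del^n)\bigr)$, where $N^\ast(\pa\Del^n)$ is the quotient dg algebra of $N^\ast(\Del^n)$ given by restriction to the boundary. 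Under this identification, a lifting problem for $\sNb(\Ph)$ against $\pa\Del^n\hookrightarrow\Del^n$ becomes the problem of finding an $\MCAs$-preimage of a prescribed Maurer--Cartan element of the pullback
\[
P_n:=\bigl(A'\tensor N^\ast(\Del^n)\bigr)\times_{A'\tensor N^\ast(\pa\Del^n)}\bigl(A\tensor N^\ast(\pa\Del^n)\bigr)
\]
under the ``corner map'' $c\maps A\tensor N^\ast(\Del^n)\to P_n$; Prop.\ \ref{prop:strict-pb} guarantees that $P_n$ is a genuine pullback in $\cAs$ and that $c$ is a strict $\infty$-morphism. I would then check that $c$ is in fact a strict acyclic fibration: it is surjective in every filtration degree (split $\sPh$ via Lem.\ \ref{lem:fepi}, and choose a linear section of the surjection $N^\ast(\Del^n)\fib N^\ast(\pa\Del^n)$), and its kernel is $\ker\sPh\tensor\FF\vph_{[n]}$ equipped with the differential induced by $d$, since $\del\vph_{[n]}=0$ by Eq.\ \eqref{eq:top}; this kernel is level-wise acyclic because $\cF_m(\ker\sPh)=\ker(\cF_m\sPh)$ is the kernel of a surjective quasi-isomorphism for every $m$. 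A surjection with level-wise acyclic kernel is a level-wise quasi-isomorphism, so $\ctan(c)$ is a weak equivalence and a fibration in $\cChF$, i.e.\ $c$ is a strict acyclic fibration; Cor.\ \ref{cor:acyc-retract} then supplies a section of $c$, and applying $\MCAs(-)$ to that section yields the desired lift. (For $n=0$ one has $N^\ast(\pa\Del^0)=0$, so $P_0\cong A'$ and $c=\Ph$, and the claim reduces to surjectivity of $\MCAs(\Ph)$, again from Cor.\ \ref{cor:acyc-retract}.)

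The hard part, I expect, is this corner-map step: setting up the natural isomorphism $\Hom_{\sSet}(\pa\Del^n,\sNb(A))\cong\MCAs(A\tensor N^\ast(\pa\Del^n))$ carefully — one needs $A\tensor(-)$ and $\MCAs(-)$ to carry the finite limit computing $N^\ast(\pa\Del^n)$ to the corresponding limit of $\sainf$-algebras, and Prop.\ \ref{prop:strict-pb} to recognize $P_n$ as a pullback in $\cAs$ — and, above all, verifying that $c$ is \emph{acyclic}, not merely a fibration. It is precisely in the identification $\ker c\cong\ker\sPh\tensor\FF\vph_{[n]}$ and in the acyclicity of this complex that the hypothesis ``$\Tha$ is a weak equivalence'' gets consumed; everything else is formal manipulation within the homotopical structure already in place. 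An alternative to the acyclicity computation would be to use Prop.\ \ref{prop:Nprod} to decompose $\sNb(\Pr_{A'})\cong\sNb(A')\times\sNb(\ker\sPh)$ and then show directly that the nerve of a level-wise acyclic abelian $\sainf$-algebra is contractible — e.g.\ by promoting the filtered contracting homotopy furnished by Prop.\ \ref{prop:acyc} to a simplicial nullhomotopy — but the corner-map route seems to keep the combinatorial bookkeeping lightest.
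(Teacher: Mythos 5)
Your argument is correct, and while its outer shell coincides with the paper's (factor $\Tha = P_\Tha \cc \Psi$ via Prop.\ \ref{prop:Ainf-factor}, dispose of $\Psi$ by two-out-of-three, and strictify acyclic fibrations via Cor.\ \ref{cor:min-mod}), the way you handle the remaining case of a strict acyclic fibration is genuinely different. The paper stays with the product decomposition: it invokes Prop.\ \ref{prop:Nprod} to write $\sNb(A' \tim \ker\sPh) \cong \sNb(A') \tim \sNb(\ker\sPh)$ and then kills the second factor by computing the homotopy groups of the nerve of an acyclic abelian algebra through the Dold--Kan correspondence (Lemma \ref{lem:acyc-abelian}, resting on Prop.\ \ref{prop:abelian}) --- exactly the ``alternative'' you sketch in your closing sentence. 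You instead prove the stronger statement that $\sNb$ carries strict acyclic fibrations to trivial Kan fibrations, by solving lifting problems against $\pa\Del^n \hookrightarrow \Del^n$ through the corner map $c \maps A \tensor N^\ast(\Del^n) \to P_n$; your identifications $\ker c \cong \ker\sPh \tensor \FF\vph_{[n]}$ and of its acyclicity are right (note that $\FF\vph_{[n]}$ is indeed a dg ideal by Eq.\ \eqref{eq:top}, so $N^\ast(\pa\Del^n)$ is a quotient dg algebra), and Cor.\ \ref{cor:acyc-retract} does supply the section whose image under $\MCAs(-)$ solves the lift. What your route costs is the one piece of infrastructure the paper never needs: the natural bijection $\Hom_{\sSet}(\pa\Del^n, \sNb(A)) \cong \MCAs\bigl(A \tensor N^\ast(\pa\Del^n)\bigr)$ together with the fact that $\MCAs$ turns the pullback of Prop.\ \ref{prop:strict-pb} into a fiber product of sets --- the latter is not formal, since the underlying complex of the pullback is only the fiber product up to the shear $H$ of Eq.\ \eqref{eq:H}, and one must check that $H_\ast$ identifies $\MCAs(\ti{A})$ with the set of compatible pairs (this works because both legs of your square are strict, so the higher components of $H$ vanish). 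What it buys is a stronger conclusion (a trivial fibration rather than a $\pi_\ast$-isomorphism), no dependence on the homotopy-group computations of Sec.\ \ref{sec:hmtpy}, and a statement that feeds directly into the ``category of fibrant objects'' program sketched in Rmk.\ \ref{rmk:GM}. Both proofs consume the hypothesis in the same place: the acyclicity of $\ker\sPh$ in each filtration degree.
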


We will need one additional lemma which follows directly from Prop.\ \ref{prop:abelian} in the next section.
\begin{lemma}\label{lem:acyc-abelian}
Let $(A,d) \in \cChF$ be an acyclic cochain complex. Then the homotopy groups of the nerve of the 
corresponding abelian $\csainf$-algebra $(A,d,0)$ are trivial: 
\[
\pi_0(\sNb(A)) = 0, \quad \text{and} \quad \pi_{k \geq 1}(\sNb(A),\al) = 0 \quad \forall \al \in \MCAs(A).
\] 
\end{lemma}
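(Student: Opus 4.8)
The plan is to deduce this statement from Proposition~\ref{prop:abelian} of the next section, which identifies the homotopy groups of the nerve of an \emph{abelian} $\csainf$-algebra $(A,d,0)$ with the cohomology of its underlying complex $(A,d)$. Since $(A,d)$ is acyclic, that cohomology vanishes, so every homotopy group supplied by Proposition~\ref{prop:abelian} is trivial; in particular $\pi_0\bigl(\sNb(A)\bigr)$ is a single point, being nonempty since $0\in\MCAs(A)$. The only gap to bridge is that the present statement asks for the vanishing of $\pi_{k\geq 1}\bigl(\sNb(A),\al\bigr)$ at \emph{every} Maurer--Cartan element $\al$, whereas Proposition~\ref{prop:abelian} is most naturally phrased at the basepoint $0$.

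To close that gap I would exploit the fact that twisting an abelian $\csainf$-algebra changes nothing. Indeed, for $(A,d,0)$ the only nonzero structure map is $Q^{1}_1=d$, so for any $\al\in\MCAs(A)$ and any $n\geq 1$ the twisted operation
\[
(Q^{\al})^{1}_n(x_1,\ldots,x_n)=\sum_{k\geq 0}Q^{1}_{k+n}\bigl(\shpr{\al^{\tensor k}}{(x_1\tensor\cdots\tensor x_n)}\bigr)
\]
has at most one nonzero summand, namely the one with $k+n=1$, i.e.\ $k=0$ and $n=1$. Hence $(Q^{\al})^{1}_1=d$ and $(Q^{\al})^{1}_{n\geq 2}=0$, so $A^{\al}=(A,d,0)=A$ as $\csainf$-algebras. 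By Lemma~\ref{lem:shift}, the map $\mathrm{Shift}_{\al}\maps\sNb(A^{\al})=\sNb(A)\xto{\cong}\sNb(A)$, $\be\mapsto\al+\be$, is then an isomorphism of simplicial sets carrying the vertex $0$ to the vertex $\al$; consequently $\pi_k\bigl(\sNb(A),\al\bigr)\cong\pi_k\bigl(\sNb(A),0\bigr)$ for all $k\geq 0$ and all $\al$. This reduces everything to the basepoint $0$, where the claim follows from Proposition~\ref{prop:abelian} together with the acyclicity of $(A,d)$ as above.

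For orientation it may help to spell out the concrete picture: in the abelian case $\MCAs(A,d,0)=\ker\bigl(d\maps A^0\to A^1\bigr)$, and more generally $\sN_n(A)=Z^0\bigl(A\tensor N^{\ast}(\Del^n)\bigr)$ is the space of degree-$0$ cocycles, so $\sNb(A)$ is a simplicial set of Dold--Kan type whose homotopy groups are precisely the cohomology groups of $(A,d)$; this is exactly what Proposition~\ref{prop:abelian} records. I do not anticipate a real obstacle here: all of the substance lies in Proposition~\ref{prop:abelian}, and granting it the present argument is essentially formal. The single point needing attention is the passage from the basepoint $0$ to an arbitrary basepoint, which is handled cleanly by the triviality of twisting in the abelian setting.
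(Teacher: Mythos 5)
Your proof is correct and takes essentially the same route as the paper, which likewise derives this lemma directly from Proposition~\ref{prop:abelian}. The only remark worth making is that your basepoint-change step via Lemma~\ref{lem:shift} is not actually needed: Proposition~\ref{prop:abelian} is already stated (and proved, via translation in the simplicial abelian group $K_\bul(C)$) for an arbitrary vertex $\al \in \MCAs(A)$, though your observation that twisting an abelian algebra is trivial gives an equally valid way to reduce to the basepoint $0$.
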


\begin{proof}[Proof of Theorem \ref{thm:GM}]
Since $\sNb(A)$ and  $\sNb(A')$ are Kan simplicial sets, we just need to prove that $\sNb(\Tha)$ is a weak homotopy equivalence, i.e.\ that it induces an isomorphism on all homotopy groups. We first consider a special case:  Suppose $\Tha$ is an acyclic fibration. Let $(\ker \Tha^1_1,d)$ denote the abelian $\csainf$-sub-algebra of $\AdQ{}$ corresponding to the kernel of $\ctan(\Tha)$. Then Prop.\ \ref{prop:min-mod} implies that there exists a morphism $\Psi \maps \AdQ{} \to (\ker \Tha^1_1,d)$ which, in turn, yields an isomorphism
\[
(\Tha, \Psi) \maps (A, d, Q) \xto{\iso} (A' \tim \ker \Tha^1_1, d_\tim, Q_\tim),
\]
via Cor.\ \ref{cor:min-mod}. In order to show $\sNb(\Tha)$ is a weak homotopy equivalence, we deduce from diagram \eqref{diag:acyc-strict} that it suffices to show that $\sNb(\Pr_A') \maps 
\sNb(A' \tim \ker \Tha^1_1) \to \sNb(A')$ is a weak homotopy equivalence. Recall from Prop.\ \ref{prop:Nprod} that the nerve functor preserves products. Hence, $\sNb(\Pr_A')$ is a weak equivalence if and only if the projection
\[
\pr_{\sNb(A')} \maps \sNb(A') \tim \sNb(\ker \Tha^1_1) \to \sNb(A')
\] 
induces an isomorphism on all homotopy groups at all basepoints $(\al',\al)$. The latter statement immediately follows from 
Lemma \ref{lem:acyc-abelian}, since we have isomorphisms
\[
\begin{split}
\pi_{0}\bigl( \sNb(A') \tim \sNb(\ker \Tha^1_1) \bigr) &\cong 
\pi_{0}\bigl( \sNb(A') \bigr) \tim \pi_0 \bigl(\sNb(\ker \Tha^1_1) \bigr) \cong \pi_{0}\bigl( \sNb(A') \bigr) \\
\pi_{k \geq 1}\bigl( \sNb(A') \tim \sNb(\ker \Tha^1_1),(\al',\al) \bigr) &\cong
\pi_{k \geq 1}\bigl( \sNb(A'), \al' \bigr) \tim \pi_{k \geq 1} \bigl(\sNb(\ker \Tha^1_1), \al \bigr)\\
&  \cong \pi_{k \geq 1}\bigl( \sNb(A'), \al' \bigr).
\end{split}
\]

This completes the  theorem for the case when $\Tha$ is an acyclic fibration in $\cAs$. Now we consider the general case. Let $\Tha \maps \AdQ{} \to \AdQ{\prime}$ be a weak equivalence. By Prop.\ \ref{prop:Ainf-factor}, there exists a factorization $\Tha = P_{\Tha} \cc \Psi$ in $\cAs$ such that $P_\Tha$ is an acyclic fibration, and $\Psi$ is a right inverse to an acyclic fibration, which we denote as $\Upsilon$ in the following diagram:
\[
\begin{tikzdiag}{1}{3}
{
\AdQ{}\& (\ti{A}, \ti{d},\ti{Q} ) \& \AdQ{\prime} \\
};

\path[->>,font=\scriptsize]
(m-1-2) edge node[auto,swap] {$\Upsilon$} node[sloped,below] {$\sim$} (m-1-1)
(m-1-2) edge node[auto] {$P_{\Tha}$} node[sloped,below] {$\sim$} (m-1-3)
;
\path[->,font=\scriptsize]
(m-1-1) edge[bend left =40]  node[auto] {$\Psi$} node[sloped,below] {$\sim$} (m-1-2)
;
\end{tikzdiag}
\]
As was verified above, $\sNb(\Upsilon)$ and $\sNb(P_{\Tha})$ are  weak homotopy equivalences since $\Upsilon$ and $P_{\Tha}$ are  acyclic fibrations in $\cAs$. The equality $\sNb(\Upsilon) \cc \sNb(\Psi) = \id_{\sNb(A)}$ implies that $\sNb(\Psi)$ is a weak homotopy equivalence by the 2 of 3 property. Hence, we conclude $\sNb(\Tha) = \sNb(P_{\Tha}) \cc \sNb(\Psi)$ is a weak homotopy equivalence as well.
\end{proof}

\begin{remark}\label{rmk:GM}
One could continue on this path and show that the category $\cAs$ forms a category of fibrant objects (CFO) for a homotopy theory \cite{Brown:1973}, in complete analogy with the category of $\cinf$-algebras in characteristic zero \cite[Thm.\ 6.2]{CR}. All that remains is to verify that Prop.\ \ref{prop:strict-pb} generalizes to pullbacks of arbitrary (that is, not strict) fibrations and acyclic fibrations. The proof used to verify the analogous $\cinf$ statement \cite[Cor.\ 6.6]{CR} would work equally well here. After this, one could proceed to strengthen Thm.\ \ref{thm:GM} by showing that the nerve functor is an ``exact functor'' between $\cAs$ and the category of Kan simplical sets, in analogy with the simplicial Maurer-Cartan functor for $\cinf$-algebras \cite[Thm.\ 7.4]{CR}. One just needs to verify that $\sNb(-)$ preserves pullbacks of fibrations.
We leave these details as an exercise for a suitably motivated reader.
\end{remark}

\begin{remark}  \label{rmk:curved}
The main results of de Kleijn and Wierstra \cite{DW} hold for curved $\csainf$-algebras, which are more general than the flat $\csainf$-algebras considered in this paper. It is reasonable to suspect that a modification of our proof of Thm.\ \ref{thm:GM} could provide a Goldman-Millson Theorem for curved algebras. The main issue is finding a convenient presentation for the homotopy theory of curved $\csainf$-algebras in positive characteristic. Indeed, it follows directly from the definition \cite[Def.\ 3.1]{DW} that a curved $\csainf$-algebra does not, in general, have an underlying cochain complex. The homotopy theory developed in Sec.\ \ref{sec:cat-props} for flat $\csainf$-algebras, therefore, does not automatically extend to the curved case. 

One promising approach, which circumvents the above issue, is to instead prove a version of the Goldman-Millson Theorem for the  ``mixed-curved'' $\csainf$-algebras featured in the recent work of Calaque, Campos, and Nuiten \cite[Example 2.72]{CCN}. Mixed curved algebras have an underlying cochain complex, and the notion of weak equivalence between such algebras coincides with our notion of weak equivalence between flat $\csainf$-algebras. The upshot is the characterization of the homotopy theory for curved algebras in terms of mixed-curved algebras via a pullback square of simplicial categories \cite[Thm.\ C; Cor.\ 2.64]{CCN}. However, the authors      
of \cite{CCN} work over a field of characteristic zero, and indeed, their construction of the simplicial category of curved $\csainf$-algebras explicitly uses the de Rham-Sullivan algebra.
A modification of this construction would be needed in order to verify that the relevant results of \cite{CCN} extend to positive characteristic. This would take us beyond the scope of the present paper, and so we leave it for future work. 
\end{remark}

\section{The homotopy groups of the nerve} \label{sec:hmtpy}
In this section, we characterize the homotopy groups of $\sNb(A)$ in terms of the cohomology of  tangent cochain complex of $\AdQ{}$. Throughout, we tacitly use the notation from Section \ref{sec:simp-note} for simplicial sets and the cosimplicial simplicial set $\Del^\bul$.

\subsection{The abelian case} \label{sec:ab}
Following the terminology frequently used in the context of $L_\infty$-algebras, we say an $\csainf$-algebra $\AdQ{}$ is \df{abelian} if $\AdQ{}$ is equal to its tangent cochain complex $(A,d)$, or equivalently $\sQ_{n} =0$ for all $n >1$. In this case, the homotopy groups of $\sNb(A)$ are easy to characterize. Recall that one side of the Dold-Kan correspondence \cite{Dold,Kan} is given by the functor  $K_\bul \maps \ChainF \xto{\simeq} \sVectF$, where 
\[
K_\bul(C):= \Hom_{\ChainF}(N_\ast(\Del^\bul), C).
\]
Furthermore, there are isomorphisms of abelian groups, natural in $C$:
\begin{equation} \label{eq:DK}
\pi_n(K_\bul(C),0) \cong H_n(C) \quad \forall n \geq 0.
\end{equation}

\begin{proposition} \label{prop:abelian}
Let $\AdQ{}=(A,d)$ be an abelian  $\sainf$-algebra. There is an isomorphism of abelian groups
\[
\pi_0(\sNb(A)) \cong H^0(A), \qquad \pi_n(\sNb(A),\al) \cong H^{-n}(A) \quad \forall n \geq 1,
\] 
for each vertex $\al \in \sN_0(A)=\MCAs(A)$.
\end{proposition}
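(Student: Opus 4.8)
The plan is to reduce the statement to the Dold--Kan correspondence recalled in \eqref{eq:DK}. For an abelian $\csainf$-algebra $A=(A,d,0)$, the Maurer--Cartan equation $\curv_\Ass(a) = \sQ_1(a) = da = 0$ is linear, so for each $n \geq 0$ we have $\sN_n(A) = \MCAs(A \tensor N^\ast(\Del^n)) = \ker\bigl(d_{N} \maps (A \tensor N^\ast(\Del^n))^0 \to (A \tensor N^\ast(\Del^n))^1\bigr)$. In other words, $\sNb(A)$ is the simplicial set underlying the simplicial abelian group $Z^0(A \tensor N^\ast(\Del^\bl))$ of degree-zero cocycles. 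The first step is therefore to identify this simplicial abelian group, via the standard adjunction between tensoring and $\Hom$ over $\FF$, with $\Hom_{\ChF}\bigl(N_\ast(\Del^\bl), A\bigr)$ placed in the appropriate degree: since $N^\ast(\Del^n) = \Hom_\FF(N_\ast(\Del^n),\FF)$ and $N_\ast(\Del^n)$ is finite-dimensional in each degree, one has $A \tensor N^\ast(\Del^n) \cong \Hom_\FF(N_\ast(\Del^n), A)$ as cochain complexes, and taking degree-zero cocycles picks out exactly the chain maps $N_\ast(\Del^n) \to A$, i.e.\ $K_\bl(A)$ in the Dold--Kan notation (after the usual regrading that turns the cochain complex $(A,d)$ into a chain complex).

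With that identification in hand, the second step is purely bookkeeping with \eqref{eq:DK}. Since $\sNb(A) \cong K_\bl(A)$ as simplicial sets, and $K_\bl(A)$ is in fact a simplicial abelian group, its set of connected components is $\pi_0(K_\bl(A),0) \cong H_0$ of the relevant chain complex, which under the cohomological-to-homological regrading is $H^0(A)$; this gives the $\pi_0$ statement, and in particular shows $\sN_0(A) = \MCAs(A)$ surjects onto $H^0(A)$ with the expected kernel. For $n \geq 1$ and any basepoint $\al \in \MCAs(A)$, I would invoke the fact that a simplicial abelian group is homotopy equivalent, as a pointed space, to the one based at $0$ (translation by $\al$ is an automorphism of the simplicial abelian group $K_\bl(A)$), so $\pi_n(\sNb(A),\al) \cong \pi_n(K_\bl(A),0) \cong H_n$ of the chain complex, which is $H^{-n}(A)$ after regrading. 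Naturality in $C$ of the isomorphisms \eqref{eq:DK} transfers to naturality in $A$, and one checks directly that the identifications are compatible with $\infty$-morphisms of abelian algebras, which for abelian algebras are just chain maps on the tangent complex.

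The main obstacle, such as it is, is not conceptual but a matter of being careful with three layers of convention simultaneously: (i) the suspension/desuspension and the shift between ``$A_\infty$'' and ``shifted $A_\infty$'' conventions, (ii) the cohomological-versus-homological grading switch needed to apply Dold--Kan, and (iii) the precise form of the isomorphism $A \tensor N^\ast(\Del^n) \cong \Hom_\FF(N_\ast(\Del^n),A)$ and its simplicial naturality (the face and degeneracy maps on $N^\ast(\Del^\bl)$ are dual to the cofaces and codegeneracies on $N_\ast(\Del^\bl)$, which must match the cosimplicial structure used in defining $K_\bl$). Once the signs and regradings are pinned down, the statement follows immediately, and indeed this is why Lemma \ref{lem:acyc-abelian} (the vanishing of all homotopy groups when $(A,d)$ is acyclic) is an instant corollary: acyclicity of $(A,d)$ means $H^{\bullet}(A) = 0$, hence all the groups above vanish.
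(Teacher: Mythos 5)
Your proposal is correct and follows essentially the same route as the paper: identify $Z^0(A\tensor N^\ast(\Del^\bl))$ with the Dold--Kan functor $K_\bl$ applied to the regraded (truncated) tangent complex, invoke \eqref{eq:DK}, and use translation by $\al$ in the simplicial abelian group to move between basepoints. The only point to pin down, which you flag as bookkeeping, is that the degree-zero cocycle condition forces the $N^0$-component into $Z^0(A)$, so the relevant chain complex is the good truncation $\tau_{\leq 0}A$ rather than all of $A$ -- exactly as in the paper's proof.
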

\begin{proof}
By definition of the nerve, $\sN_n(A) = \MCAs(A \tensor N^\ast(\Del^n)) = Z^0(A \tensor N^\ast(\Del^n))$, since $A$ is abelian.
Because the cochain complex $N^\ast(\Del^n)$ is concentrated in degrees $0,\ldots,n$, we have $\sN_n(A) \sse \bigoplus_{k=0}^n A^{-k} \tensor N^k(\Del^n)$. Consider the simplicial vector space $K_\bul(C)$, where $C \in \ChainF$ is the chain complex whose underlying graded vector space is $C_k := A^{-k}$ for all $k \geq 0$, and whose differential is given by $d \maps A^{-k} \to A^{-k +1}$.   
By taking the linear dual of the chain complex $N_\ast(\Del^n)$, we may write $K_n(C)$
as degree zero cocycles of the tensor product of the cochain complex $N^\ast(\Del^n)$ with the truncation $\tau_{\leq 0}A$ of $A$. As a result, we obtain a natural isomorphism of simplicial vector spaces 
\[
K_\bul(C) \cong Z^0 \bigl(A \tensor N^\ast(\Del^\bul)  \bigr). 
\]
Hence, $\pi_0(\sNb(A)) \cong H^0(A)$ and $\pi_n(\sNb(A),0) \cong H^{-n}(A)$ for all $n \geq 1$ by \eqref{eq:DK}. Since $\sNb(A) \cong K_\bul(C)$ is a simplicial abelian group, translation by any vertex $\al \in \MCAs(A)$ gives an automorphism $\sNb(A) \cong \sNb(A)$. Therefore, for  all $n \geq 1$, $\pi_{n}(\sNb(A),\al) \cong \pi_{n}(\sNb(A),0) \cong  H^{-n}(A)$.
\end{proof}

\subsection{$\pi_{n \geq 2}$ for the general case} \label{sec:gen}
Fix an $\csainf$-algebra $(A,d_A,Q)$. Here we will characterize the homotopy groups $\pi_{n \geq 2}(\sNb(A),0)$. We treat $\pi_{1}(\sNb(A),0)$ separately in the next section, for the sake of exposition. Our approach 
follows that of Berglund's \cite{Berglund} for complete filtered $L_\infty$-algebras in characteristic zero.
By applying the below results to the twisted $\csainf$-algebra $(A,{d_A}^\al,Q^\al)$, 
the analogous characterization of the groups $\pi_{n \geq 1}(\sNb(A),\al)$ is obtained, thanks to Lemma \ref{lem:shift}.  

Our conventions for simplicial homotopy groups follow \cite[Sec.\ 5.2, 5.3]{Wu}. For $n \geq 1$, let $\sphZ_n(\sNb(A),0)$ denote the set of spherical $n$-simplicies:
\[
\sphZ_n(\sNb(A),0):= \{ \be \in \MCAs(A \tensor N^\ast(\Del^n)) \st d_i \be =0 \quad \forall i=0,\ldots,n\}, 
\] 
and let $Z^{-n}(A)$ denote, as usual, the degree $-n$ cocycles of the tangent cochain complex $(A,d_A)$.
For any  $a \in A^{-n}$, the curvature equation (Sec.\ \ref{sec:MC}) along with Eq.\ \ref{eq:top} implies that $\curv_{\Ass}(a \tensor \vph_{[n]}) = d_Aa \tensor \vph_{[n]}$. Hence,
\[
a \tensor \vph_{[n]} \in \MCAs(A \tensor N^\ast(\Del^n)) \qquad \forall a \in Z^{-n}(A).
\]
Furthermore, Eq.\ \ref{eq:top} also implies that the equalities $d_i(a \tensor \vph_{[n]})= a \tensor d_i\vph_{[n]} =0$ hold for all $i=0,\ldots, n$. Therefore, we have a well-defined function  $\chi_n \maps Z^{-n}(A) \to \sphZ_n(\sNb(A),0)$
\[
\chi_n(a):= a \tensor \vph_{[n]}.
\]  
\begin{lemma}\label{lem:chi-surj}
For each $n\geq 1$, the function $\chi_n \maps Z^{-n}(A) \to \sphZ_n(\sNb(A),0)$ is surjective.
\end{lemma}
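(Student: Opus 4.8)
The plan is to show that any spherical $n$-simplex $\be \in \sphZ_n(\sNb(A),0)$ is necessarily \emph{concentrated in top degree}, i.e.\ lies in the summand $A^{-n} \tensor N^n(\Del^n) \cong A^{-n} \tensor \FF\vph_{[n]}$, and then to verify that the ``top coefficient'' is a cocycle. Write $\be = \sum_{k=0}^n \be^{(k)}$ with $\be^{(k)} \in A^{-k} \tensor N^k(\Del^n)$, using that $N^\ast(\Del^n)$ is concentrated in degrees $0,\dots,n$ and that a Maurer-Cartan element is a degree-$0$ element of $A \tensor N^\ast(\Del^n)$. The idea is an induction on $k$ from $0$ upward (equivalently, on the complementary ``cochain degree'' of $N^\ast(\Del^n)$), using the face conditions $d_i\be = 0$ for all $i = 0,\dots,n$ together with the combinatorics of the basis $\{\vph_\si\}$ from Sec.\ \ref{sec:basis}.

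First I would unwind the face maps. For a basis element $\vph_\si$ with $\si \in \cB_{n,k}$ and $k < n$, there exists some $i \in \{0,\dots,n\}$ which is \emph{not} one of the vertices appearing in $\si$; picking such an $i$, the face formula \eqref{eq:face} shows $d_i \vph_\si = \vph_\ta$ where $\ta \in \cB_{n-1,k}$ is $\si$ reindexed into $\Del^{n-1}$, and moreover distinct basis elements of $N^k(\Del^n)$ missing vertex $i$ map to distinct basis elements of $N^k(\Del^{n-1})$. Since the face map $d_i$ on $A \tensor N^\ast(\Del^n)$ is $\id_A \tensor d_i$, the vanishing $d_i\be = 0$ forces, degree by degree in $A$, the vanishing of the coefficient of every $\vph_\si$ with $i \notin \si$. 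Ranging over all $i$, and noting that the only $\si \in \cB_{n,k}$ surviving for \emph{all} choices of the omitted vertex would have to contain every vertex $0,\dots,n$ — impossible unless $k = n$, in which case $\si = [n]$ — we conclude $\be^{(k)} = 0$ for all $k < n$. Hence $\be = a \tensor \vph_{[n]}$ for a unique $a \in A^{-n}$.

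Finally I would check $a \in Z^{-n}(A)$. By the curvature computation already recorded just above the lemma, $\curv_\Ass(a \tensor \vph_{[n]}) = d_A a \tensor \vph_{[n]}$, using $\del\vph_{[n]} = 0 = \vph_{[n]} \cupp \vph_{[n]}$ from \eqref{eq:top} (so all higher products $\sQ_{m}$, $m \geq 2$, contribute $\vph_{[n]}^{\cupp m} = 0$). Since $\be$ is Maurer-Cartan, $d_A a \tensor \vph_{[n]} = 0$, and as $\vph_{[n]} \neq 0$ this gives $d_A a = 0$, i.e.\ $a \in Z^{-n}(A)$. Therefore $\be = \chi_n(a)$, proving surjectivity.

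I expect the main obstacle to be the purely combinatorial step: making rigorous the claim that the face conditions, taken together, kill every basis element except $\vph_{[n]}$. The cleanest phrasing is probably to fix $k$ and argue that for each $\si \in \cB_{n,k}$ with $k<n$ one can choose an omitted vertex $i$, apply injectivity of $d^i$ (as noted after \eqref{eq:face}) to see that the $\vph_\si$-coefficient of $\be^{(k)}$ equals (up to the $A$-tensor factor) a coefficient of $d_i\be = 0$, hence vanishes — with a downward induction on $n-k$ not even being needed once one observes the faces decouple the $N^\ast$-degrees. Everything else is bookkeeping with Koszul signs, which the statement of \eqref{eq:top} already absorbs.
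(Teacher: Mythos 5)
Your proposal is correct and follows essentially the same route as the paper: decompose $\be$ into bidegree components, use the face conditions $d_i\be=0$ together with the injectivity of the coface maps $d^i$ and linear independence of the dual basis to kill every coefficient $\be_{k,\si}$ with $k<n$, and then read off $d_Aa=0$ from the curvature identity $\curv_{\Ass}(a\tensor\vph_{[n]})=d_Aa\tensor\vph_{[n]}$ recorded before the lemma. The only difference is that you make the cocycle check explicit, which the paper leaves implicit in the phrase ``it follows from the definition of $\chi_n$.''
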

\begin{proof}
Any element $\be \in \MCAs(A \tensor N^\ast(\Del^n))$ may be written as $\be = \sum_{k=0}^n \sum_{ \si \in \cB(n,k)} \be_{k,\si} \tensor \vph_\si$, for some $\be_{k,\si} \in A^{-k}$. If, in addition, $\be \in \sphZ_n(\sNb(A),0)$, then we have
$\sum_{k=0}^n \sum_{ \si \in \cB(n,k)} \be_{k,\si} \tensor d_i \vph_\si  =0$ for all $i=0,\ldots,n$.
To prove the lemma, it follows from the definition of the function $\chi_n$ that 
it suffices to show that $\be_{k,\si}=0$ for all $\si \in \cB(n,k)$ whenever $k \neq n$. If $\si \in \cB(n,k\neq n)$ is such a $k$-simplex in $\Del^n$, then there exists an $i \in \{0,\ldots,n\}$ such that $i \notin \si$. Hence, there is a non-degenerate $k$-simplex $\ta \in \cB(n-1,k)$ such that $d^i\ta =\si$. From Eq.\ \ref{eq:face}, it then follows that $\be_{k,\si} \tensor \vph_{\ta}$ will appear as a summand on the left hand side of the equation $d_i \be =0$. Therefore, $\be_{k,\si}=0$ by linear independence of the dual basis.
\end{proof}

Next, recall that for $n \geq 1$, we may express the $n$th homotopy group as the quotient $\pi_n(\sNb(A),0) =\sphZ_n(\sNb(A),0)/ \sim$, where $\be \sim \be'$ if there exists $ \eta \in \sN_{n+1}(A)$ such that $d_0\eta = \be$, $d_1 \eta = \be'$, and $d_j \eta =0$ for all $j > 1$. 
The group operation on $\pi_{n \geq 2}(\sNb(A),0)$ is given by $[\al] + [\be] = [d_1 \om]$, where $\om \in  \sN_{n+1}(A)$ is any $n+1$ simplex satisfying: 
\begin{equation} \label{eq:grp-op}
d_0 \om = \be, \quad d_2 \om = \al, \quad \text{$d_j \om = 0$ for all $j > 2$.}
\end{equation}

\begin{theorem} \label{thm:pin}
Denote by $\ba{\chi}_n \maps Z^{-n}(A) \to \pi_n(\sNb(A),0)$ the function $\ba{\chi}_n(a):= [a \tensor \vph_{[n]}]$. Then
\begin{enumerate}
\item $\ba{\chi}_n$ is a homomorphism of abelian groups for all $n \geq 2$.
\item The homomorphism $\ba{\chi}_n$ induces an isomorphism
\[
H^{-n}(A) \xto{\cong} \pi_n(\sNb(A),0) \quad \forall n \geq 2.
\] 
\end{enumerate}
\end{theorem}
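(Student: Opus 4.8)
The plan is to obtain surjectivity of $\ba{\chi}_n$ for free, then identify $\ker \ba{\chi}_n$ with the coboundaries $B^{-n}(A)\subseteq Z^{-n}(A)$; part (1) and the isomorphism in part (2) then follow formally. Surjectivity is immediate: $\chi_n \maps Z^{-n}(A) \to \sphZ_n(\sNb(A),0)$ is onto by Lemma \ref{lem:chi-surj}, and $\ba{\chi}_n$ is its composite with the quotient map $\sphZ_n(\sNb(A),0)\twoheadrightarrow \pi_n(\sNb(A),0)$. The one computational fact underlying everything else is a degree bound on $N^\ast(\Del^{n+1})$: for $n \geq 2$ the cup product of any two cochains of degree $\geq n$ lands in degree $\geq 2n > n+1$ and hence vanishes. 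Consequently, if $\om \in A \ctensor N^\ast(\Del^{n+1})$ is supported in the summands $A^\ast \tensor N^k(\Del^{n+1})$ with $k \geq n$, then $(Q_N)^1_k(\om^{\tensor k}) = 0$ for every $k \geq 2$, so the Maurer-Cartan equation for $\om$ collapses to the linear condition $d_N\om = 0$ with $d_N = d_A \tensor \id + \id \tensor \del$. I will also use the formulas \eqref{eq:diff}, \eqref{eq:face}, \eqref{eq:top}: writing $\si_j \in \cB_{n+1,n}$ for the non-degenerate $n$-simplex of $\Del^{n+1}$ obtained by omitting the vertex $j$, one reads off that $d_i \vph_{\si_j}$ equals $\vph_{[n]}$ if $i = j$ and $0$ otherwise, that $\del \vph_{\si_j} = (-1)^{n+1+j}\vph_{[n+1]}$, and that $d_i \vph_{[n+1]} = 0 = \del\vph_{[n+1]}$ for all $i$.

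For part (1), given cocycles $a,b \in Z^{-n}(A)$ I would realize the group law \eqref{eq:grp-op} by the explicit $(n+1)$-simplex $\om := b \tensor \vph_{\si_0} + (a+b) \tensor \vph_{\si_1} + a \tensor \vph_{\si_2} \in A^{-n} \tensor N^n(\Del^{n+1})$, which has total degree $0$. By the observation above $\om$ is a Maurer-Cartan element as soon as $d_N\om = 0$, and this holds automatically: the $d_A$-contribution vanishes since $a,b$ are cocycles, while the $\del$-contribution equals $(-1)^{n+1}\bigl(b - (a+b) + a\bigr)\tensor \vph_{[n+1]} = 0$. The face identities give $d_0\om = b \tensor \vph_{[n]}$, $d_2\om = a \tensor \vph_{[n]}$, and $d_j\om = 0$ for $j > 2$, so \eqref{eq:grp-op} yields $\ba{\chi}_n(a) + \ba{\chi}_n(b) = [d_1\om] = [(a+b)\tensor\vph_{[n]}] = \ba{\chi}_n(a+b)$.

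For part (2), I would first show $B^{-n}(A) \subseteq \ker\ba{\chi}_n$. If $a = d_A b$ with $b \in A^{-n-1}$, set $\eta := a \tensor \vph_{\si_0} + b \tensor \vph_{[n+1]}$, of total degree $0$ and supported in $N$-degrees $n$ and $n+1$; the cup-product terms vanish, and $d_N\eta = 0$ because $d_A a = 0$ and the $\vph_{[n+1]}$-coefficient of $d_N\eta$ is, up to sign, $d_A b - a = 0$. Since $d_0\eta = a\tensor\vph_{[n]}$ and $d_j\eta = 0$ for all $j \geq 1$, the simplex $\eta$ witnesses $a\tensor\vph_{[n]} \sim 0$ in $\sNb(A)$, hence $\ba{\chi}_n(a) = 0$. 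For the reverse inclusion, suppose $\eta \in \sN_{n+1}(A)$ satisfies $d_0\eta = a\tensor\vph_{[n]}$ and $d_j\eta = 0$ for all $j \geq 1$. Writing $\eta = \sum_{k=0}^{n+1}\eta_k$ with $\eta_k \in A^{-k}\tensor N^k(\Del^{n+1})$, and using that each $d_j$ preserves $N$-degree, the hypotheses force $d_j(\eta_k) = 0$ for every $j$ whenever $k < n$; the elementary rigidity statement that an element of $A^{-k}\tensor N^k(\Del^{n+1})$ annihilated by all face operators must vanish when $k \leq n$ (each such $k$-simplex omits a vertex, and the corresponding face operator isolates it via $d_i\vph_\si = \vph_\ta$) then gives $\eta_k = 0$ for $k < n$, and the remaining face conditions pin $\eta$ down to $\eta = a\tensor\vph_{\si_0} + b\tensor\vph_{[n+1]}$ for some $b \in A^{-n-1}$. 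The Maurer-Cartan equation for $\eta$ is again just $d_N\eta = 0$, whose $\vph_{[n+1]}$-component reads $a = d_A b$. Hence $\ker\ba{\chi}_n = B^{-n}(A)$, and together with surjectivity and part (1) this produces the natural isomorphism of abelian groups $H^{-n}(A) \xto{\cong} \pi_n(\sNb(A),0)$ for all $n \geq 2$.

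I do not expect a substantive obstacle: the argument is essentially the bookkeeping of the simplicial operators on $N^\ast(\Del^{n+1})$ in the dual basis of \S\ref{sec:basis} (following Berglund's treatment of the $L_\infty$ case), combined with the degree bound $n \geq 2$, which is precisely what kills the quadratic and higher Maurer-Cartan terms, so that both the nullhomotopy $\eta$ and the ``composition'' $(n+1)$-simplex $\om$ can be taken linear in the $N^\ast$-variable; this is also exactly why $\pi_1$ needs the separate treatment of the next section. The single point worth isolating as a lemma is the rigidity claim about elements annihilated by every face operator; the rest is direct substitution into \eqref{eq:diff}--\eqref{eq:top}.
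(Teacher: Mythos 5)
Your proposal is correct and follows essentially the same route as the paper: the same explicit $(n+1)$-simplices $\om = b\tensor\vph_{d^0[n]} + (a+b)\tensor\vph_{d^1[n]} + a\tensor\vph_{d^2[n]}$ and $\eta = a\tensor\vph_{d^0[n]} + b\tensor\vph_{[n+1]}$, the same use of the bound $n\geq 2$ to kill all quadratic and higher Maurer--Cartan terms, and the same rigidity argument (the paper reuses the argument of Lemma \ref{lem:chi-surj}) to pin down a nullhomotopy to the form $a\tensor\vph_{d^0[n]} + b\tensor\vph_{[n+1]}$. Isolating the degree bound and the face-operator rigidity as standalone observations is a reasonable reorganization but not a different proof.
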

\begin{proof}
(1) Let $a,b \in Z^{-n}(A)$. It suffices to exhibit an $n+1$ simplex $\om \in \MCAs\bigl (A \tensor N^\ast(\Del^{n+1}) \bigr)$ as in \eqref{eq:grp-op} such that $[d_1 \om] =  [(a+b) \tensor \vph_{[n]}]$. Let $\om \in A^{-n} \tensor N^n(\Del^{n+1})$ be the element
\begin{equation} \label{eq:chi-bij-1}
\om := b \tensor \vph_{d^0[n]} + (a+b) \tensor \vph_{d^1[n]} + a \tensor \vph_{d^2[n]}, 
\end{equation}
where, for example, $d^1[n]$ is the $n$-simplex $(0 < 2 < 3 <\cdots < n <n+1)$  in $\Del^{n+1}$.
By using the formulas for the face maps \eqref{eq:face}, it is clear that $\om$ satisfies the conditions given in \eqref{eq:grp-op}. It remains to verify that $\om$ is Maurer-Cartan. Since $n \geq 2$, the cup product of any two basis elements appearing on the right hand side of Eq.\ \ref{eq:chi-bij-1} vanish, for degree reasons. Therefore, we have
\[
\curv_{\Ass}(\om) = (d_A \tensor \id)(\om) + (\id \tensor \del)(\om)=(\id \tensor \del)(\om),
\]  
where the last equality above follows from the fact that $a$ and $b$ are cocycles. 
Formula \eqref{eq:diff} for the differential on normalized cochains gives us the equalities
$\del \vph_{d^j[n]} = (-1)^{n+1 +j} \vph_{[n+1]}$, for $j=0,1,2$. Hence, $(\id \tensor \del)(\om) = (-1)^{n+1} \bigl( b - (a+b) +a \bigr)\tensor \vph_{[n+1]}=0$, and therefore $\om \in \MCAs\bigl(A \tensor N^\ast(\Del^{n+1}) \bigr)$. 

(2) By Lemma \ref{lem:chi-surj}, it suffices to show that $\ker  \ba{\chi}_n$ is equal to the subgroup of coboundaries. Suppose $a = d_Aa' \in Z^{-n}(A)$. By definition of the equivalence relation on spherical elements, in order to show $\ba{\chi}_n(a) = 0$, it suffices to construct an $n+1$ simplex $\eta \in 
\MCAs(A \tensor N^\ast(\Del^{n+1}))$ such $d_0 \eta = a \tensor \vph_{[n]}$ and $d_j\eta =0$ for $j >0$. Define $\eta:= d_Aa' \tensor \vph_{d^0[n]} + a' \tensor \vph_{[n+1]}$. 
Then $\eta$ clearly satisfies the required boundary conditions. Since $n \geq 2$, a computation identical to the one involving $\om$ in the above proof of part (1) shows that $\curv_{\Ass}(\eta) = 0$.

Conversely, suppose $a \in Z^{-n}(A)$ and $\ba{\chi}_n(a) = [0]$. Then there exists an
$n+1$ simplex $\eta \in \MCAs(A \tensor N^\ast(\Del^{n+1}))$ such $d_0 \eta = a \tensor \vph_{[n]}$ and $d_j\eta =0$ for $j =1, \ldots, n+1$. Write $\eta$ as $\eta = \sum_{k=0}^{n+1} \sum_{ \si \in \cB(n+1,k)} \eta_{k,\si} \tensor \vph_\si$, with $\eta_{k,\si} \tensor \vph_\si \in A^{-k} \tensor N^{k}(\Del^{n+1})$.  
The constraint on $d_0 \eta$ implies that the only non-trivial summand of $\eta$ with bidegree $(-n,n)$
is $a \tensor \vph_{d^0[n]}$, and also that $\sum_{k\neq n} \sum_{ \si \in \cB(n+1,k)} \eta_{k,\si} \tensor d_0\vph_\si=0$.
This equation along with the remaining constraints on $d_{j}\eta$ imply that 
\[
\sum^{n-1}_{k=0} \sum_{ \si \in \cB(n+1,k)} \eta_{k,\si} \tensor \vph_\si=0,
\]
via the argument given in the proof of Lemma \ref{lem:chi-surj}. Hence, we conclude that $\eta$ is of the form 
\[
\eta = a \tensor \vph_{d^0[n]} + a' \tensor \vph_{[n+1]} \in A^{-n} \tensor N^{n}(\Del^{n+1}) \dsum
A^{-n-1} \tensor N^{n+1}(\Del^{n+1}).
\] 
Finally, since $n \geq 2$, the Maurer-Cartan equation $\curv_{\Ass}(\eta)=0$ implies that $\eta$ is a cocycle, and therefore $a=d_A a'$ is a coboundary. 
\end{proof}

\subsection{$\pi_{1}$ for the general case} \label{sec:gen-pi1}
The non-abelian group operation on $\pi_1(\sNb(A),0)$ is $[\al] \cdot [\be] := [d_1\om]$,
where $\om \in \sN_2(A)$ is any $2$-simplex satisfying $d_0 \om = \be$, $d_2 \om = \al$. To characterize this in terms of the complex $(A,d_A)$, we need a new operation on the degree $-1$ cohomology. The origin of this new operation is classical. It appears, for example, in the theory of Jacobson radicals for non-unital noncommutative rings \cite[p.\ 67]{Lam}.   
For the sake of motivation, consider a local unital ring $S$. Let  $S^\times$ denote its group of units, and $\mm$ its maximal ideal. If $x \in \mm$, then we have $1+x \in S^\times$. The inverse of $1+x$ is necessarily of the form $1+y$ with $y \in \mm$. As an element of the non-unital ring $\mm$, 
$y$ is determined by the fact that $x +y + xy =0$ if and only if $(1+x)(1+y) =1$. These elementary observations lead to the following definition.    
\begin{definition}[Sec.\ 1 \cite{Bergman}] \label{def:qi}
Let $\ba{R}$ be a non-unital associative $\FF$ algebra. The \df{quasi-multiplication} 
$-\qm- \maps \ba{R} \times \ba{R} \to \ba{R}$  is the associative binary operation
\[
a \qm b:= a + b + ab, \quad \forall a,b \in \ba{R}.
\]
An element $a \in \ba{R}$ is \df{quasi-invertible} if there exists $\qinv{a} \in \ba{R}$ such that
$a \qm \qinv{a} = \qinv{a} \qm a =0$.
\end{definition}
The quasi-multiplication induces a group structure on the subset $\ba{R}^\qm \sse \ba{R}$ of quasi-invertible elements\footnote{The term ``quasi-invertible'' is adopted from \cite{Bergman}. It should not be confused with the notion of  ``quasi-invertible'' 1-simplicies in the sense of \cite[Sec.\ 7]{BG}.}.
\begin{remark}\label{rmk:unitize}
If $R$ is an augmented unital associative $\FF$ algebra with augmentation ideal $\ba{R}$, then there is an isomorphism of groups $\ba{R}^{\qm} \cong \{ u \in R^\times \st 1-u \in \ba{R}\}$. 
\end{remark}

\begin{lemma}\label{lem:quasi-pronil}
Every element of a pronilpotent non-unital associative algebra is quasi-invertible.
\end{lemma}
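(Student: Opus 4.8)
The statement to prove is Lemma~\ref{lem:quasi-pronil}: every element of a pronilpotent non-unital associative $\FF$-algebra is quasi-invertible. Recall that "pronilpotent" means $\ba R = \plim_k \ba R / \cF_k \ba R$ where $\{\cF_k \ba R\}$ is a complete descending (multiplicative) filtration, so in particular $\ba R = \cF_1 \ba R$ and the filtration is compatible with multiplication: $\cF_i \ba R \cdot \cF_j \ba R \subseteq \cF_{i+j}\ba R$. The plan is to exhibit an explicit quasi-inverse via a convergent series. Given $a \in \ba R$, I would set
\[
\qinv a := \sum_{n \geq 1} (-1)^n a^n,
\]
which converges in $\ba R$ because $a^n \in \cF_n \ba R$ and $\ba R$ is complete with respect to the filtration. (Equivalently, pass to the unitization $R = \FF \oplus \ba R$, which is pronilpotently filtered with $\cF_1 R = \ba R$; then $1 + a$ is invertible with inverse $\sum_{n \geq 0}(-1)^n a^n = 1 + \qinv a$, using the standard geometric series argument for complete filtered rings. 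This is exactly the "local ring" heuristic spelled out before Def.~\ref{def:qi}, and one could also just cite Rmk.~\ref{rmk:unitize}.)

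The key steps, in order: (1) observe convergence of the series defining $\qinv a$, using completeness of the filtration and $a^n \in \cF_n$; (2) verify the identity $a \qm \qinv a = a + \qinv a + a\,\qinv a = 0$ by a telescoping computation — namely $a + \sum_{n\ge1}(-1)^n a^n + \sum_{n\ge1}(-1)^n a^{n+1}$, in which the two tails cancel term-by-term against each other and against the leading $a$; (3) by the same telescoping on the other side, $\qinv a \qm a = 0$ as well (or simply note $\qm$ is associative, hence $\ba R$ with $0$ is a monoid and a one-sided quasi-inverse in a monoid satisfying the cancellation here — more cleanly, the same series works on both sides by symmetry of the computation). This shows $a \in \ba R^\qm$.

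There is essentially no hard part: the only thing to be careful about is the convergence/well-definedness of the infinite sum in the complete filtered setting, and making sure the rearrangement in the telescoping is legitimate — but both follow immediately from $a^n \in \cF_n \ba R$ together with continuity of addition and multiplication for the filtration topology, which is part of the standing hypotheses on pronilpotent algebras. I would write the proof in two or three sentences: define $\qinv a$ by the series, note it lies in $\ba R$ by completeness, and check $a \qm \qinv a = \qinv a \qm a = 0$ by the telescoping cancellation (or alternatively invoke Rmk.~\ref{rmk:unitize} to reduce to invertibility of $1+a$ in the unitization, which is the classical Neumann series argument).
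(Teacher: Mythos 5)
Your proof is correct and is exactly the paper's argument: the paper's proof consists of the single sentence that the quasi-inverse is constructed explicitly via the usual formula for inverting a power series, which is precisely the geometric series $\qinv{a}=\sum_{n\geq 1}(-1)^n a^n$ you write down, converging because $a^n\in\cF_n\ba{R}$ and $\ba{R}$ is complete. Your telescoping verification of $a\qm\qinv{a}=\qinv{a}\qm a=0$ just spells out what the paper leaves implicit.
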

\begin{proof}
The quasi-inverse of any element can be constructed explicitly via the usual formula for  inverting a power series.
\end{proof}
Fix a $\csainf$ algebra $(A,d_A,Q)$. The equation $Q^2=0$ implies a compatibility between the map $\sQ_2$ and the differential $d$, so that the binary operation on $H^{-1}(A)$:
\[
\ba{a} \ba{b}:= \ov{\sQ_2(a,b)}
\]
is well-defined and associative. The filtration on $A$ induces a complete filtration on $H^{-1}(A)$ compatible with the above multiplication. Hence, $H^{-1}(A)$ is a pronilpotent non-unital associative algebra, and $H^{-1}(A)^{\qm} = H^{-1}(A)$ by Lemma \ref{lem:quasi-pronil}. 


It turns out that the group structure on $H^{-1}(A)$ given by the quasi-multiplication $\qm$ coincides with the concatenation of loops in the fundamental group of $\sNb(A)$.  
\begin{theorem}\label{thm:pi1}
The function $\ba{\chi}_1 \maps Z^{-1}(A) \to \pi_1(\sNb(A),0)$, defined as $\ba{\chi}_1(a):= [a \tensor \vph_{[1]}]$ induces a group isomorphism
\[
H^{-1}(A)^\qm \xto{\cong} \pi_1(\sNb(A),0).
\]
\end{theorem}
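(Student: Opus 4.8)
The plan is to adapt the proof of Theorem~\ref{thm:pin} to the case $n=1$. The one genuinely new feature is that on $\Del^2$ the cup product of the two ``outer'' edges $\vph_{d^2[1]}\cupp\vph_{d^0[1]}=\vph_{[2]}$ does \emph{not} vanish (by \eqref{eq:cup}), whereas for $\Del^{n}$ with $n\ge 2$ all such products vanish for degree reasons; the term it contributes to the Maurer--Cartan equation is precisely what promotes the addition on $H^{-1}(A)$ to the quasi-multiplication. I write $ab:=\sQ_2(a,b)$ and use the basis and formulas of Secs.~\ref{sec:basis}--\ref{sec:norm-form} throughout. Since $\sQ_2$ is compatible with $d_A$ --- the consequence of $Q^2=0$ already recorded above the statement --- the space $Z^{-1}(A)$ is closed under $a\qm b:=a+b+ab$, and on cocycle representatives this computes the quasi-multiplication $\qm$ on $H^{-1}(A)$ (whose product is $\ov{a}\,\ov{b}=\ov{\sQ_2(a,b)}$, Def.~\ref{def:qi}). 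Finally, $\pi_1$ is a genuine group and $\sim$ a genuine equivalence relation because $\sNb(A)$ is Kan (Thm.~\ref{thm:Kan}), and $H^{-1}(A)^\qm=H^{-1}(A)$ by Lemma~\ref{lem:quasi-pronil}.

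\emph{Well-definedness and surjectivity.} Given $a-a'=d_Ac$ with $a,a'\in Z^{-1}(A)$, I would exhibit
\[
\eta:=a\tensor\vph_{d^0[1]}+a'\tensor\vph_{d^1[1]}+c\tensor\vph_{[2]}\ \in\ \bigl(A\tensor N^\ast(\Del^2)\bigr)^0 .
\]
Formula \eqref{eq:face} gives $d_0\eta=a\tensor\vph_{[1]}$, $d_1\eta=a'\tensor\vph_{[1]}$ and $d_2\eta=0$. None of the cup products among the cochains appearing in $\eta$ is $\vph_{d^2[1]}\cupp\vph_{d^0[1]}$, and all other relevant products vanish by \eqref{eq:cup}, \eqref{eq:top} and for degree reasons; hence $\curv_{\Ass}(\eta)$ reduces to $(d_A\tensor\id+\id\tensor\del)(\eta)$, which by \eqref{eq:diff} and \eqref{eq:top} equals $\bigl(d_Ac-(a-a')\bigr)\tensor\vph_{[2]}=0$. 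So $\eta\in\sN_2(A)$ witnesses $a\tensor\vph_{[1]}\sim a'\tensor\vph_{[1]}$, and $\ba{\chi}_1$ descends to a map $H^{-1}(A)\to\pi_1(\sNb(A),0)$. Surjectivity is immediate from Lemma~\ref{lem:chi-surj}: every spherical $1$-simplex has the form $a\tensor\vph_{[1]}$ with $a\in Z^{-1}(A)$, and $\pi_1(\sNb(A),0)=\sphZ_1(\sNb(A),0)/{\sim}$.

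\emph{Homomorphism property.} Fix $a,b\in Z^{-1}(A)$ and set
\[
\om:=a\tensor\vph_{d^2[1]}+b\tensor\vph_{d^0[1]}+(a\qm b)\tensor\vph_{d^1[1]}\ \in\ \bigl(A\tensor N^\ast(\Del^2)\bigr)^0 .
\]
By \eqref{eq:face}, $d_0\om=b\tensor\vph_{[1]}$, $d_2\om=a\tensor\vph_{[1]}$ and $d_1\om=(a\qm b)\tensor\vph_{[1]}$. All $\sQ_{k\ge 3}$-terms in $\curv_{\Ass}(\om)$ vanish for degree reasons, so the curvature is the sum of its linear part and the single quadratic term $\pm\sQ_2(a,b)\tensor\bigl(\vph_{d^2[1]}\cupp\vph_{d^0[1]}\bigr)=\pm(ab)\tensor\vph_{[2]}$ produced by \eqref{eq:Ainf-tensor1}; a sign computation from \eqref{eq:diff} shows the linear part to be $\bigl(\mp a\mp b\pm(a\qm b)\bigr)\tensor\vph_{[2]}$, and since $a\qm b=a+b+ab$ the two cancel, so $\om$ is Maurer--Cartan. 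As the group law on $\pi_1(\sNb(A),0)$ reads $[\al]\cdot[\be]=[d_1\om]$ for any $2$-simplex $\om$ with $d_0\om=\be$ and $d_2\om=\al$, this gives $\ba{\chi}_1(a)\cdot\ba{\chi}_1(b)=\bigl[(a\qm b)\tensor\vph_{[1]}\bigr]=\ba{\chi}_1(a\qm b)$, so the induced map is a group homomorphism.

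\emph{Injectivity, conclusion, main obstacle.} Suppose $\ba{\chi}_1(a)$ is the identity of $\pi_1(\sNb(A),0)$: there is $\eta\in\sN_2(A)$ with $d_0\eta=a\tensor\vph_{[1]}$ and $d_1\eta=d_2\eta=0$. Expanding $\eta$ in the basis of $N^\ast(\Del^2)$ and solving the three face constraints via \eqref{eq:face} --- the same linear-independence argument used in Lemma~\ref{lem:chi-surj} and Thm.~\ref{thm:pin}(2) --- forces $\eta=a\tensor\vph_{d^0[1]}+a'\tensor\vph_{[2]}$ for some $a'\in A^{-2}$; the Maurer--Cartan equation then reduces (again no cup products survive) to $(d_Aa'-a)\tensor\vph_{[2]}=0$, so $a=d_Aa'$ and $\ov{a}=0$ in $H^{-1}(A)$. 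A bijective group homomorphism is an isomorphism, which proves the theorem. The only real difficulty lies in the sign bookkeeping of the homomorphism step: one must feed in the conventions of Sec.~\ref{sec:norm-form} and \eqref{eq:Ainf-tensor1} carefully enough to confirm that the $\vph_{d^1[1]}$-coefficient of the Maurer--Cartan $2$-simplex $\om$ comes out to be exactly $a\qm b=a+b+\sQ_2(a,b)$, rather than, say, its negative (which is not the quasi-inverse of $a$ in general). Conceptually this is the crux of the theorem: the product $\sQ_2$ enters the fundamental group with the correct sign precisely because the cup product of the two outer edges of $\Del^2$ is the top cochain $\vph_{[2]}$, and, together with the associativity law $Q^2=0$, this is what upgrades addition on $H^{-1}(A)$ to the quasi-multiplication.
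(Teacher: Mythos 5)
Your proof is correct and takes essentially the same route as the paper's: the same witnessing $2$-simplices for well-definedness, injectivity, and the homomorphism property, with surjectivity supplied by Lemma~\ref{lem:chi-surj}. The sign verification you flag as the one remaining difficulty is precisely what the paper isolates into Lemma~\ref{lem:MC2}: for $\al=\sum_{j}w_j\tensor\vph_{d^{\,j}[1]}+u\tensor\vph_{[2]}$ the Maurer--Cartan equation reduces to $d_Au-w_0+w_1-w_2-\sQ_2(w_2,w_0)=0$, and with $w_0=b$, $w_2=a$, $u=0$ this forces $w_1=a+b+\sQ_2(a,b)=a\qm b$ on the nose (including the order of the arguments of $\sQ_2$, which matters in the noncommutative setting), confirming the sign you hoped for.
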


To prove the theorem, we will use a technical lemma.
\begin{lemma}\label{lem:MC2}
A degree $0$ element $\al \in A \tensor N^\ast(\Del^2)$ of the form
\begin{equation} \label{eq:MC2}
\al = \sum_{j=0}^2 w_j \tensor \vph_{d^{\, j}[1]} + u \tensor \vph_{[2]} \in A^{-1} \tensor N^{1}(\Del^2) 
~ \dsum ~
A^{-2} \tensor N^{2}(\Del^2)
\end{equation}
is Maurer-Cartan if and only if $w_0,w_1,w_2 \in Z^{-1}(A)$, and
\[
d_A u - w_0 + w_1 - w_2 - \sQ_2(w_2,w_0)=0.
\]
\end{lemma}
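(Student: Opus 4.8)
The statement to be proven (Lemma~\ref{lem:MC2}) is a purely computational identification of when a specific low-degree cochain-valued element is Maurer--Cartan. The plan is to expand the curvature $\curv_{\Ass}(\al) = \sum_{n \geq 1} \sQ_n(\al^{\tensor n})$ term by term using the explicit formulas \eqref{eq:diff}, \eqref{eq:cup}, \eqref{eq:face} for the differential, cup product, and faces of $N^\ast(\Del^2)$, and sort the result according to the basis of $N^\ast(\Del^2)$ in each cohomological degree, using the fact that $A \tensor N^\ast(\Del^2)$ has its tangent differential $d_A \tensor \id + (-1)^{?} \id \tensor \del$ and its higher operations $\sQ_n$ computed via the cup product on the $N$-factor.

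First I would record the relevant cup-product data on $\Del^2$: by degree reasons $\vph_{d^j[1]} \cupp \vph_{d^k[1]}$ lands in $N^2(\Del^2) = \FF \vph_{[2]}$, and the formula \eqref{eq:cup} shows that the only nonvanishing such product among the three edges is $\vph_{d^2[1]} \cupp \vph_{d^0[1]} = \vph_{[2]}$ (the edge $0<1$ followed by the edge $1<2$ concatenates to $0<1<2$); all products of $\vph_{[2]}$ with anything, and all triple products, vanish. Hence in the expansion of $\curv_{\Ass}(\al)$ only $\sQ_1 = d_A \tensor \id + \id \tensor \del$ and the single quadratic term $\sQ_2$ contribute, and $\sQ_2$ contributes exactly one term, namely $\pm \sQ_2(w_2, w_0) \tensor \vph_{[2]}$ (up to the Koszul sign coming from $\deg w_2 = \deg \vph_{d^2[1]} \cdot$ etc., which one checks is trivial here since both live in the right parity). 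Then $\curv_{\Ass}(\al) = 0$ splits into: the $N^1(\Del^2)$-component, which by \eqref{eq:diff} forces $d_A w_j = 0$ for $j = 0,1,2$ (each $w_j$ a cocycle); and the $N^2(\Del^2)$-component, which collects $d_A u \tensor \vph_{[2]}$, the terms $w_j \tensor \del \vph_{d^j[1]}$, and $\sQ_2(w_2,w_0)\tensor \vph_{[2]}$. Using $\del \vph_{d^j[1]} = (-1)^{j}\vph_{[2]}$ (the three boundary-coface incidences, matching the sign convention used just above in the proof of Theorem~\ref{thm:pin}), the $N^2$-component becomes $\bigl( d_A u - w_0 + w_1 - w_2 - \sQ_2(w_2,w_0)\bigr)\tensor \vph_{[2]} = 0$, which is the asserted relation. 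The converse is immediate: if the $w_j$ are cocycles and the displayed equation holds, reversing the bookkeeping shows every homogeneous component of $\curv_{\Ass}(\al)$ vanishes.

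The main obstacle I anticipate is purely a matter of sign-chasing: pinning down the Koszul sign in $\sQ_2$ applied to $w_2 \tensor \vph_{d^2[1]}$ and $w_0 \tensor \vph_{d^0[1]}$ (which depends on the degrees $\deg w_0 = -1$ and $\deg \vph_{d^2[1]} = 1$ via \eqref{eq:Ainf-tensor1}), and the sign in the internal differential $\id \tensor \del$ on $A \tensor N^\ast(\Del^2)$, so that these combine to exactly reproduce the signs $-,+,-$ in front of $w_0, w_1, w_2$ and the sign in front of $\sQ_2(w_2,w_0)$. I would handle this exactly as in the proof of Theorem~\ref{thm:pin}, where the identical incidence signs $\del\vph_{d^j[n]} = (-1)^{n+1+j}\vph_{[n+1]}$ already appear; here $n = 1$, and a careful check of the conventions fixed in Sec.~\ref{sec:norm-form} and Sec.~\ref{sec:Ainf-tensor} yields the stated form. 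Everything else is a short, mechanical degree count, so no genuine conceptual difficulty arises.
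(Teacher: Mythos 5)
Your proposal is correct and follows essentially the same route as the paper's proof: expand $\curv_{\Ass}(\al)$, observe that only the $k=1,2$ terms survive because each tensor factor of $\al$ carries positive degree in $N^\ast(\Del^2)$, identify $\vph_{d^2[1]}\cupp\vph_{d^0[1]}=\vph_{[2]}$ as the only surviving cup product, use $\del\vph_{d^j[1]}=(-1)^j\vph_{[2]}$, and read off the $N^1$- and $N^2$-components. The Koszul signs you flag as the remaining bookkeeping do work out to the stated formula, exactly as in the paper.
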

\begin{proof}
Let $Q_N$ denote the $\csainf$-structure on $A \tensor N^\ast(\Del^2)$ induced from  $(A,d_A,Q)$
as in Sec.\ \ref{sec:Ainf-tensor}.
Via Eq. \ref{eq:Ainf-tensor1}, we see that the hypothesis on $\al$ implies that $(Q_N)^1_k(\al^{\tensor k})$ will involve $k-1$ cup products of positive degree elements in $N^{\ast}(\Del^2)$. Therefore
\[
{\curv_{\Ass}}_N(\al) = \sum_{k=1}^\infty (Q_N)^1_k(\al^{\tensor k}) = (d_A \tensor \id + \id \tensor \del)(\al) + (Q_N)^1_2(\al \tensor \al).
\]   
The formula \eqref{eq:diff} for the differential $\del$ implies that  
$\del \vph_{d^{\, j}[1]} = (-1)^{j} \vph_{[2]}$. Formula \eqref{eq:cup}  implies that
$\vph_{d^{\, i}[1]} \cupp \vph_{d^{\, j}[1]} = \vphi_{[2]}$ if $i=2$, and $j=0$, and that all other cup products vanish. Hence, after applying the Koszul rule for signs, we see that the Maurer-Cartan equation ${\curv_{\Ass}}_N(\al) =0$ simplifies to
\[
\sum_{j=0}^2 d_A w_j \tensor \vph_{d^{\, j}[1]} + \Bigl( d_Au - \sum_{j=0}^2 (-1)^j w_j - \sQ_2(w_2,w_0) \Bigr)\tensor \vph_{[2]} =0.
\]
\end{proof}

\newcommand{\bfchi}{\boldsymbol{\chi}}

\begin{proof}[Proof of Theorem \ref{thm:pi1}]
Let us first show that $\ba{\chi}_1$ descends to an injective function $\bfchi_1 \maps  H^{-1}(A) \to \pi_1(\sNb(A),0)$. It will then follow from Lemma \ref{lem:chi-surj} that $\bfchi_1$ is a bijection between sets.  

Suppose $a, b$ are $1$-cocycles such that $a - b =d_Au$. Let $\al:= a \tensor \vph_{d^{0}[1]} + b \tensor \vph_{d^{1}[1]}  + u \tensor \vph_{[2]} \in \bigl(A \tensor N^\ast(\Del^2) \bigl)^0$. Then $\al \in \sN_2(A)$ by Lemma \ref{lem:MC2}, and
Eq.\ \ref{eq:face} implies that $d_0 \al = a \tensor \vph_{[1]}$,  $d_1 \al = b\tensor \vph_{[1]}$, and
$d_2 \al = 0$. Therefore, $\ba{\chi}_1(a)=\ba{\chi}_1(b)$, and so the function $\bfchi_1$ is well-defined. 
For the converse direction, suppose  $\ba{\chi}_1(a)=\ba{\chi}_1(b)$. Then there is an $\eta \in \sN_2(A)$ witnessing the equivalence
of spherical elements $a \tensor \vph_{[1]} \sim  b \tensor \vph_{[1]}$. Write $\eta$ as $\eta = \ti{\al} + \al$, where $\al$ is a degree $0$ element of the form \eqref{eq:MC2} above, and $\ti{\al} \in A^0 \tensor N^0(\Del^2)$. For each basis element $\vph_{i} \in N^{0}(\Del^2)$ either $d_0\vph_{i}$ or $d_1\vph_{i}$ is non-trivial. Therefore, since $d_0\eta$ and $d_1\eta$ are in bidegree $(-1,1)$, it follows that $\ti{\al} =0$. We then conclude from Lemma \ref{lem:MC2} that $\eta = a \tensor \vph_{d^{0}[1]} + b \tensor \vph_{d^{1}[1]}  + u \tensor \vph_{[2]}$ for some $u \in A^{-2}$ such that $d_Au = a-b$. Hence, $\bfchi_1$ is injective.    

It remains to show that $\bfchi_1$ is a group homomorphism. Let $a, b \in Z^{-1}(A)$. Then the equation $Q^2 = 0$ implies that $\sQ_2(a,b) \in Z^{-1}(A)$ as well. Let $\om \in A \tensor N^{\ast}(\Del^2)$ be the degree $0$ element 
\[
\om:= b \tensor \vph_{d^{0}[1]} + \bigl(a+b + \sQ_2(a,b) \bigr) \tensor \vph_{d^{1}[1]} 
+ a \tensor \vph_{d^{2}[1]}. 
\]
Then $\om \in \sN_2(A)$ by Lemma \ref{lem:MC2}. Since $d_0\om = b \tensor \vph_{[1]}$, $d_2\om = a \tensor \vph_{[1]}$, the definition of the group operation $\pi_1(\sNb(A),0)$ implies that
\[
[a \tensor \vph_{[1]}] \cdot  [b \tensor \vph_{[1]}] = [d_1\om] = [a+b + \sQ_2(a,b)]. 
\]
Hence, $\bfchi_1(\ba{a} \qm \ba{b}) = \bfchi_1(\ba{a}) \cdot \bfchi_1(\ba{b})$.
\end{proof}
\release{\bfchi}

\newcommand{\GMC}[1]{\MC_{\mathrm{dgAlg}}(#1)/\sim_{\mathrm{gauge}}}
\newcommand{\MCA}{\MC_{\mathrm{dgAlg}}}

\subsection{$\pi_0$ for dg algebras}
Suppose $(C,d_C,\mu)$ is a complete filtered non-unital dg associative algebra over $\FF$. Let $(A,d,Q)$ be the corresponding $\csainf$-algebra, as in Example \ref{ex:dga}, with $A:=\bs^{-1}C$. Then $A^{-1}=C^0$ is
a pronilpotent associative algebra, and so by Lemma \ref{lem:quasi-pronil} we can consider $C^0$ as the group of quasi-invertible elements $G:=(C^0,\qm)$. Elements of $G$ act on the Maurer-Cartan\footnote{It is standard to consider Maurer-Cartan elements of dg associative algebras as degree 1 elements.} set $\MCA(C):=\{x \in C^1 \st d_C x + \mu(x,x)=0\}$ via the so-called ``gauge action''
\begin{equation} \label{eq:gauge}
g \cdot x : = x - d_C^x(g) - d^x_C(g)\qinv{g}.
\end{equation}
for all $x \in \MCA(C)$ and $g \in C^0$. Above $d_C^x(g):= d_C(x) + \mu(x,g) -\mu(g,x)$ is the differential twisted by $x$, and $\qinv{g}$ is the quasi-inverse of $g$. This action is well known for unital dg algebras. See, for example, the exposition in \cite[Sec.\ 5.6]{WE} for the non-unital case.
Denote by $\GMC{C}$ the set of Maurer-Cartan elements modulo the gauge action.

On the other hand, $\MCA(C)$ is the vertex set of the nerve $\sNb(A)$, and we may consider the set of equivalence classes $\pi_0 \sNb(A)$ induced by simplicial homotopy. Given $x, y \in \MCA(C)$, we write $x \simh y$ if $\bs^{-1}x$ and $\bs^{-1}y$ are connected by a 1-simplex in $\sNb(A)$.
\begin{lemma}\label{lem:dga-pi0}
Let $(A,d,Q)$ be the corresponding $\csainf$-algebra of
$(C,d_C,\mu)$ as above.  A degree $0$ element $\ga \in A \tensor N^\ast(\Del^1)$ of the form
\begin{equation} \label{eq:MC2}
\begin{split}
\ga = (\bs^{-1}y \tensor \vph_0 + \bs^{-1}x \tensor \vph_1)  + \bs^{-1}g \tensor \vph_{[1]}&  \\ 
&\in A^{0} \tensor N^{0}(\Del^1)~ \dsum ~ A^{-1} \tensor N^{1}(\Del^1)
\end{split}
\end{equation}
is a 1-simplex in $\sNb(A)$ if and only if $x,y \in \MCA(C)$ and 
\[
 y =  x - \mu(y,g) +  \mu(g,x)  - d_Cg.
\]
\end{lemma}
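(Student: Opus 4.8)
The statement is a direct computation in the $\csainf$-algebra $A \tensor N^\ast(\Del^1)$, where $A = \bs^{-1}C$ has $\csainf$-structure maps $\sQ_1 = \ti{d}$, $\sQ_2 = \bs^{-1} \cc \mu \cc (\bs \tensor \bs)$, and $\sQ_{k \geq 3} = 0$, as in Example \ref{ex:dga}. The plan is to expand the curvature equation $\curv_\Ass(\ga) = 0$ for the given degree-$0$ element $\ga$ using the formula \eqref{eq:Ainf-tensor1} for the tensor-product codifferential $Q_N$ together with the explicit formulas \eqref{eq:diff}, \eqref{eq:cup}, \eqref{eq:top} for the differential, cup product, and vanishing of $\vph_{[1]}$ in $N^\ast(\Del^1)$.

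First I would record the relevant facts about $N^\ast(\Del^1)$: it is concentrated in degrees $0$ and $1$, with basis $\vph_0, \vph_1$ in degree $0$ and $\vph_{[1]}$ in degree $1$; one has $\del \vph_0 = \vph_{[1]} = -\del \vph_1$ (from \eqref{eq:diff}), $\del \vph_{[1]} = 0$, $\vph_0 \cupp \vph_0 = \vph_0$, $\vph_1 \cupp \vph_1 = \vph_1$, $\vph_0 \cupp \vph_1 = 0 = \vph_1 \cupp \vph_0$, $\vph_0 \cupp \vph_{[1]} = 0$, $\vph_{[1]} \cupp \vph_1 = \vph_{[1]}$, $\vph_1 \cupp \vph_{[1]} = 0$, $\vph_{[1]} \cupp \vph_0 = 0$, and $\vph_{[1]} \cupp \vph_{[1]} = 0$ (these follow from \eqref{eq:cup} since $\vph_0$ evaluates to $1$ only on the vertex $0$, $\vph_1$ only on vertex $1$, and $\vph_{[1]}$ on the edge $(0<1)$). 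Since $\vph_{[1]}$ has positive degree and squares to zero, any term $(Q_N)^1_k(\ga^{\tensor k})$ involves $k-1$ cup products, and at most one factor $\bs^{-1}g \tensor \vph_{[1]}$ can appear without the product vanishing; hence $(Q_N)^1_{k \geq 3}$ contributes nothing, and $\curv_\Ass(\ga) = (d \tensor \id + \id \tensor \del)(\ga) + (Q_N)^1_2(\ga \tensor \ga)$.

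Next I would expand $(Q_N)^1_2(\ga \tensor \ga)$. Writing $\ga = \bs^{-1}y \tensor \vph_0 + \bs^{-1}x \tensor \vph_1 + \bs^{-1}g \tensor \vph_{[1]}$, the surviving cup products in $\sQ_2$-terms are $\vph_0 \cupp \vph_0$, $\vph_1 \cupp \vph_1$, $\vph_{[1]} \cupp \vph_1$, and (the sign-relevant ones involving $g$) combinations $\vph_0 \cupp \vph_{[1]}$ — which vanishes — so only $\vph_0\cupp\vph_0$, $\vph_1 \cupp \vph_1$, and $\vph_{[1]}\cupp\vph_1$ contribute. Collecting, the $\vph_0$-component of $\curv_\Ass(\ga)$ reads $\ti d(\bs^{-1}y) + \sQ_2(\bs^{-1}y, \bs^{-1}y)$, which via $A = \bs^{-1}C$ translates to $\bs^{-1}(d_C y + \mu(y,y))$; similarly the $\vph_1$-component gives $\bs^{-1}(d_C x + \mu(x,x))$. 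Vanishing of these two forces $x, y \in \MCA(C)$. The $\vph_{[1]}$-component, assembling the $\del$-images of the degree-zero part (which by the computation of $\del\vph_0, \del\vph_1$ yields $\bs^{-1}(y - x) \tensor \vph_{[1]}$ up to the global Koszul sign), the term $\ti d(\bs^{-1}g)$, and the $\sQ_2$-terms from $\vph_0 \cupp \vph_{[1]}$, $\vph_{[1]} \cupp \vph_1$, and the $\vph_0, \vph_1$ interactions with $\bs^{-1}g\tensor\vph_{[1]}$ — of which only the ones with nonvanishing cup product survive — yields, after translating through $\bs^{-1}$ and tracking the Koszul signs introduced by moving $\bs$ past $\vph_0, \vph_1$, the equation $y - x + \mu(y,g) - \mu(g,x) + d_C g = 0$, equivalently $y = x - \mu(y,g) + \mu(g,x) - d_C g$.

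\textbf{Main obstacle.} The only real subtlety is the careful bookkeeping of Koszul signs: $\bs^{-1}$ shifts degrees, the $(-1)^\varepsilon$ in \eqref{eq:Ainf-tensor1} depends on the degrees of the $N^\ast(\Del^1)$-factors, and the sign conventions $(-1)^k$ in the Maurer-Cartan equation $da + \sum_{k\geq 2}(-1)^k m_k(a^{\tensor k}) = 0$ must be reconciled with the codifferential conventions for shifted algebras used throughout. I would handle this by fixing once and for all that $\bs^{-1}x, \bs^{-1}y$ have degree $0$ in $A$ (so $x, y$ have degree $1$ in $C$) and $\bs^{-1}g$ has degree $-1$ (so $g$ has degree $0$), and then verifying the signs on the single nontrivial $\sQ_2$-term $\sQ_2(\bs^{-1}g \tensor \vph_{[1]}, \bs^{-1}x \tensor \vph_1)$ and its partner, which is exactly the term producing the $\mu(g,x)$ and $\mu(y,g)$ contributions; everything else is degree-forced. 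The final identity is precisely the gauge-action relation \eqref{eq:gauge} rewritten, as one checks by comparing with $y = g \cdot x$ after substituting the definition of $d_C^x$ and the quasi-inverse relation $g \qm \qinv g = 0$.
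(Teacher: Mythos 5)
Your overall strategy is the same as the paper's: expand the curvature of $\ga$ in $A \tensor N^\ast(\Del^1)$, observe that only the linear and quadratic terms survive, and compare coefficients of the linearly independent cochains $\vph_0$, $\vph_1$, $\vph_{[1]}$. However, your cup-product table contains an error that is fatal to the computation as written: you assert $\vph_0 \cupp \vph_{[1]} = 0$, whereas formula \eqref{eq:cup} gives $(\vph_0 \cupp \vph_{[1]})(0<1) = \vph_0(0)\cdot\vph_{[1]}(0<1) = 1$, so in fact $\vph_0 \cupp \vph_{[1]} = \vph_{[1]}$; this is one of the two nontrivial mixed products, the other being $\vph_{[1]} \cupp \vph_1 = \vph_{[1]}$. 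The term $(Q_N)^1_2(\bs^{-1}y \tensor \vph_0,\, \bs^{-1}g \tensor \vph_{[1]})$, whose $N^\ast(\Del^1)$-factor is precisely $\vph_0 \cupp \vph_{[1]}$, is the unique source of the $\mu(y,g)$ contribution in the asserted identity $y = x - \mu(y,g) + \mu(g,x) - d_Cg$. With your table that term vanishes and the conclusion would degenerate to $y = x + \mu(g,x) - d_Cg$, which is not the gauge relation and would break the proof of Prop.~\ref{prop:gauge}. Your write-up is also internally inconsistent on this point: you first declare the product zero, yet you include $\mu(y,g)$ in the final equation and later describe the term producing it as one of the two surviving quadratic terms.

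Aside from this, the remaining steps match the paper's proof: the vanishing of $(Q_N)^1_{k\geq 3}(\ga^{\tensor k})$ because $\vph_{[1]}$ has positive degree and squares to zero, the identification of the $\vph_0$- and $\vph_1$-components with the Maurer--Cartan equations for $y$ and $x$ in $C$, and the sign bookkeeping via $Q^1_2(\bs^{-1}a,\bs^{-1}b) = (-1)^{\deg{a}-1}\bs^{-1}\mu(a,b)$. Correcting the single cup product $\vph_0 \cupp \vph_{[1]} = \vph_{[1]}$ restores the argument and makes it coincide with the one in the paper.
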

\begin{proof}
Let $d_N$ and $Q_N$ denote the induced differential and $\csainf$-structure, respectively, on $A \tensor N^\ast(\Del^1)$. Consider the curvature 
${\curv_{\Ass}}_N(\ga)= d_N \ga + (Q_N)^1_2(\ga,\ga)$  of the degree zero element $\ga$. The non-trivial values of the differential on $N^\ast(\Del^1)$ are $\del \vphi_0 = \vphi_{[1]}$, and $\del \vphi_1 = -\vphi_{[1]}$. Therefore:
\[
\begin{split}
d_N \ga &= \bs^{-1} d_Cy \tensor \vphi_0 + \bs^{-1} d_Cx \tensor \vphi_1  
+ \bs^{-1} d_C g \tensor \vphi_{[1]} + \bs^{-1} y \tensor \del \vphi_0 + \bs^{-1}x \tensor \del \vphi_1\\
&= \bs^{-1} d_Cy \tensor \vphi_0 + \bs^{-1} d_Cx \tensor \vphi_1  + \bs^{-1}\bigl( d_C g + y -x \bigr) \tensor \vphi_{[1]}.
\end{split}
\]
Recall that for $a,b \in C$, the definition of the codifferential $Q$ implies that $Q^1_2(\bs^{-1}a,\bs^{-1}b) = (-1)^{\deg{a} -1} \bs^{-1} \mu(a,b)$. The non-trivial products in $N^\ast(\Del^1)$ are: $\vph_0 \cupp \vph_{[1]} = \vph_{[1]} \cupp \vph_{1} = \vph_{[1]}$, $\vph_0 \cupp \vph_0  = \vph_0$, and $\vph_1 \cupp \vph_1  = \vph_1$. Therefore, we obtain the equalities 
\[
\begin{split}
(Q_N)^1_2(\ga,\ga) &= (Q_N)^1_2 ( \bs^{-1} y \tensor \vphi_0, \bs^{-1} y \tensor \vphi_0) + 
(Q_N)^1_2 ( \bs^{-1} y \tensor \vphi_0, \bs^{-1} g \tensor \vphi_{[1]}) \\
& \quad + (Q_N)^1_2 ( \bs^{-1} x \tensor \vphi_1, \bs^{-1} x \tensor \vphi_1) +
(Q_N)^1_2 ( \bs^{-1} g \tensor \vphi_{[1]}, \bs^{-1} x \tensor \vphi_1)\\
&= \bs^{-1} \mu(y,y) \tensor \vphi_0 + \bs^{-1} \mu(x,x) \tensor \vphi_1 + 
\bs^{-1} \bigl( \mu(y,g) - \mu(g,x) \bigr) \tensor \vphi_{[1]}.
\end{split}
\]
The statement then follows, since the cochains $\vph_0, \vph_1, \vph_{[1]}$ are linearly independent. 
\end{proof}

\begin{proposition} \label{prop:gauge}
Let $(A,d,Q)$ be the corresponding $\csainf$-algebra of a complete filtered non-unital dg associative algebra.
$(C,d_C,\mu)$. Then Maurer Cartan elements $x, y \in \MCA(C)$ are gauge equivalent if and only if $x \simh y$, and so there is a bijection of sets 
\[
\GMC{C} ~ \cong ~ \pi_0(\sNb(A)).
\]
\end{proposition}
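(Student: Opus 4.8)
The plan is to show the two equivalence relations on $\MCA(C)$ coincide, with Lemma \ref{lem:dga-pi0} doing the heavy lifting by translating the geometric condition ``$x$ and $y$ bound a $1$-simplex in $\sNb(A)$'' into an explicit algebraic identity. First I would unwind what it means for $x \simh y$: since $\sNb(A)$ is a Kan complex (Thm.\ \ref{thm:Kan}), the relation ``connected by a $1$-simplex'' is already an equivalence relation, so I only need to match it edge-by-edge with the gauge relation. Given a $1$-simplex $\ga \in \sN_1(A) = \MCAs(A \tensor N^\ast(\Del^1))$, I first observe that its bidegree decomposition forces $\ga$ to have the form \eqref{eq:MC2} of Lemma \ref{lem:dga-pi0}: the component in $A^0 \tensor N^0(\Del^1)$ must be $\bs^{-1}y \tensor \vph_0 + \bs^{-1}x \tensor \vph_1$ with $d_0\ga = \bs^{-1}x \tensor \vph_{[0]}$ and $d_1 \ga = \bs^{-1} y \tensor \vph_{[0]}$ (up to the identification of vertices with $\MCA(C)$), and the only other possible component lies in $A^{-1} \tensor N^1(\Del^1) = C^0 \tensor \FF \vph_{[1]}$, say $\bs^{-1} g \tensor \vph_{[1]}$ for a unique $g \in C^0$; components in lower cohomological degree of $A$ vanish since $N^\ast(\Del^1)$ is concentrated in degrees $0,1$.

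Next I would invoke Lemma \ref{lem:dga-pi0}: such a $\ga$ is Maurer-Cartan precisely when $x, y \in \MCA(C)$ and
\[
y = x - \mu(y,g) + \mu(g,x) - d_C g.
\]
So the task reduces to a purely algebraic claim: \emph{for $x, y \in \MCA(C)$, there exists $g \in C^0$ satisfying the above identity if and only if $y = g' \cdot x$ for some $g' \in G = (C^0, \qm)$, where the action is \eqref{eq:gauge}.} I would prove this by massaging the two formulas into each other. Starting from $y = g \cdot x = x - d_C^x(g) - d_C^x(g)\qinv{g}$ with $d_C^x(g) = d_C g + \mu(x,g) - \mu(g,x)$, set $r := d_C^x(g)$, so $y = x - r - r \qinv{g}$. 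Using $g \qm \qinv g = 0$, i.e.\ $\qinv g = -g - g\qinv g$, one computes $r\qinv g$ in terms of $r$, $g$, and products; the goal is to rearrange $y - x + d_C g + \mu(x,g) - \mu(g,x) + (\text{correction}) = 0$ into the shape $-\mu(y,g) + \mu(g,x) - d_C g$. The key move is to replace the $\mu(x,g)$ term: since $y = x - r - r\qinv g$, one has $\mu(y,g) = \mu(x,g) - \mu(r,g) - \mu(r\qinv g, g)$, and the bilinearity lets the $r$-dependent and $\qinv g$-dependent pieces telescope, using associativity of $\mu$ and the identity $g + \qinv g + g \qinv g = 0$. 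The same computation run backwards produces, from a solution $g$ of the Lemma's identity, an explicit gauge parameter (one expects $g$ itself, or a simple modification of it, after noting the two identities differ only by reorganizing which side the ``correction'' products sit on).

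The main obstacle I anticipate is exactly this sign-and-bracket bookkeeping in matching the gauge formula \eqref{eq:gauge} — which carries the awkward $d_C^x(g)\qinv g$ term — against the clean bilinear identity coming out of the Maurer-Cartan equation on $A \tensor N^\ast(\Del^1)$; the desuspension $\bs^{-1}$ introduces Koszul signs ($Q^1_2(\bs^{-1}a,\bs^{-1}b) = (-1)^{\deg a - 1}\bs^{-1}\mu(a,b)$, already recorded in the proof of Lemma \ref{lem:dga-pi0}) that must be tracked carefully, and one must check the quasi-inverse $\qinv g$ genuinely exists, which it does by Lemma \ref{lem:quasi-pronil} since $C^0$ is pronilpotent. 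Once the algebraic equivalence ``$\exists g$ solving the Lemma's identity $\iff$ $x \sim_{\mathrm{gauge}} y$'' is established, the Proposition follows: $x \simh y \iff$ there is a Maurer-Cartan $\ga$ of the form \eqref{eq:MC2} with the right faces $\iff$ the identity of Lemma \ref{lem:dga-pi0} holds for some $g$ $\iff$ $x$ and $y$ are gauge equivalent, and passing to quotients gives the asserted bijection $\GMC{C} \cong \pi_0(\sNb(A))$. I would also remark that naturality/well-definedness is automatic since both sides are defined as quotient sets and the bijection is induced by the identity on $\MCA(C) = \sN_0(A)$.
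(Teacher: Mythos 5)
Your proposal is correct and takes essentially the same route as the paper: reduce to the edge characterization of Lemma \ref{lem:dga-pi0}, then convert the resulting identity $y = x - \mu(y,g) + \mu(g,x) - d_C g$ into the gauge formula \eqref{eq:gauge} by multiplying through by the quasi-inverse and invoking $g \qm \qinv{g} = 0$. The paper runs this manipulation from the simplex identity toward $y = g\cdot x$ while you run it in the reverse direction, but it is the same telescoping argument with the same gauge parameter $g$, so no further comment is needed.
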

\begin{proof}
We show here that if $x \simh y$, then there exists a $g \in C^0$ such that $y=g \cdot x$. The converse direction will follow from essentially the same computation.
Let us write $wz$ for the product $\mu(w,z)$ of elements $w,z \in C$.
Then, from Lemma \ref{lem:dga-pi0}, there is an element $g \in C^0$ such that
\begin{equation} \label{eq:gauge1}
y=  x - yg + gx - d_C g,
\end{equation}
Multiplying both sides of the equality on the right by the quasi-inverse $\qinv{g}$ of $g$ gives
$y\qinv{g} =  x\qinv{g} - yg\qinv{g} + gx\qinv{g} - (d_C g)\qinv{g}$. We then add this to the original equality  Eq.\ \ref{eq:gauge1} to obtain
\begin{equation}
\begin{split}
y + y\qinv{g}  &=  x +  x\qinv{g} + gx + gx\qinv{g} - yg - yg\qinv{g}
- d_C g   - (d_C g ) \qinv{g}\\
\end{split} 
\end{equation}
Since $y(g + \qinv{g} + g\qinv{g}) = y(g \qm \qinv{g}) =0$, we have
\[
y =  x +  x\qinv{g} + gx + gx\qinv{g} - d_C g   - (d_C g ) \qinv{g}.
\]  
A similar application of the identity $x(g \qm \qinv{g}) =0$ gives a further simplification:
\[
y =  x  - (d_C g + xg -gx)   -  (d_C g +xg - gx ) \qinv{g}.
\]
Hence, $y= g \cdot x$.
\end{proof}

\subsection{Application to deformations of group representations} \label{sec:def-app}
The formal deformation theory of a group representation is a classical example \cite{DF:1974, GS:1987, WE} of a deformation problem governed by the Maurer-Cartan set of a dg associative algebra.
Let $G$ be a group and fix a representation of $G$, i.e.\ a morphism of associative algebras  $\rho \maps \F[G] \to \End_{\F}(V)$. Pre/post-composition by $\rho$ gives $\End_{\F}(V)$ the structure of a bimodule over the group ring $\F[G]$, and so we may consider the Hochschild complex $C^\ast(G,V)^\ro$. This is the cochain complex whose degree $n$ component is the vector space $\Hom_\FF \bigl(\F[G]^{\tensor n}, \End_{\FF}(V) \bigr)$, and the differential is built using the multiplication on $\F[G]$ and the representation $\rho$. The associative multiplication in $\End_{\F}(V)$ induces an associative multiplication  on cochains. Hence, $C:=\bigl(C^\ast(G,V)^\ro,d, \mu, \bigr)$ is a dg associative algebra. In what follows,  $A:=(A,d,Q)$ denotes $\sainf$-algebra  corresponding to $C$.

Given a complete local Noetherian $\FF$-algebra $(R,\mm)$, the differential on $A$, along with the products on $C$ and $\mm$, extend to the completed tensor product $C \ctensor \mm:= \plim_n C \tensor_\FF (\mm/\mm^n)$ in the obvious way. The result is a complete non-unital dg algebra $C_R:=(C \ctensor \mm, d_{R}, \mu_{R} \bigr)$.
The Maurer-Cartan set $\MCA(C_R)$ encodes the deformations of $\rho$ over $R$, or equivalently lifts of $\rho$ to a representation on  $V \tensor_\FF R$. The corresponding ``coarse moduli space'' 
of such deformations is the set of orbits $\GMC{C_R}$. Proposition \ref{prop:gauge} gives a bijection
\begin{equation} \label{eq:moduli}
\GMC{C_R} ~ \cong ~ \pi_0 \bigl( \sNb(A_R) \bigr)  
\end{equation}
where $A_R:=(A \ctensor R,d_R,Q_R)$ in the notation of Sec.\ \ref{sec:Ainf-tensor}.

\newcommand{\tf}{\mathrm{tran}}
Taking a step back, let $H:=H^\ast(A,d)$ denote the tangent cohomology of $A$ as a graded vector space, i.e.\ the shifted Hochschild cohomology $\bs^{-1}HH^\ast \bigl(\F[G],\End_{\F}(V) \bigr)$. 
Via the homotopy transfer theorem \cite{Kad}, there exists an $\sainf$-algebra structure $Q_\tf$ on the complex $(H,0)$, and a $\sainf$-quasi-isomophism
\begin{equation} \label{eq:trans}
\Ph \maps (A,d,Q) \xto{\sim} (H,0,Q_\tf).
\end{equation}
The $\sainf$-structure $Q_\tf$ extends the associative product induced by the product on the Hochschild complex. As a result,  $(H,0,Q_\tf)$ completely encodes the dg algebra structure on Hochschild cochains. The upshot is that the underlying graded vector space of
$(H,0,Q_\tf)$ is ``smaller'' and hence  more amenable to computations than the dg algebra $C$. 

Moreover, the transferred structure on the cohomology retains all of the information about the formal deformation theory of $\rho$. Indeed, given $(R,\mm)$ as above, we take the completed tensor product of the quasi-isomorphism \eqref{eq:trans} with $\mm$ to obtain a weak equivalence in $\cAs$:       
\[
\Ph_R \maps A_R \weq H_R,
\]
where $H_R$ is $H \ctensor \mm$ equipped with the $\csainf$-structure induced by $Q_\tf$. In particular, applying the Goldman-Millson theorem (Thm. \ref{thm:GM}) to this weak equivalence, along with the bijection \eqref{eq:moduli} we obtain
\begin{equation} \label{eq:gauge-trans}
\GMC{C_R} \, \xto{\cong} \,  \pi_0 \bigl( \sNb(H_R) \bigr).  
\end{equation}
This identifies gauge equivalence classes of deformations of $\rho$ over $R$ with 
simplicial homotopy classes of Maurer-Cartan elements in $H_R$. 

\begin{remark}\label{rmk:WE}
In his work on deformations of Galois representations, a bijection similar to \eqref{eq:gauge-trans} was obtained by C.\ Wang-Erickson \cite[Thm.\ 6.2.3]{WE} via a more explicit computational approach. A direct calculation shows that two Maurer-Cartan elements of an $\csainf$-algebra $\AdQ{}$ are `strictly gauge equivalent'' in the sense of 
Wang-Erickson \cite[Def.\ 5.6.5]{WE} if and only if they represent the same class in 
$\pi_0\bigl(\sNb(A) \bigr)$.
\end{remark}

\section{Comparisons in characteristic zero} \label{sec:char0}
In this section, we set $\FF=\kk$ to be a field of characteristic zero. We denote by $\sOm{n}$, the unital commutative dg assocative algebra (cdga) of polynomial de Rham forms on the $n$-simplex. The collection $\sOmb:= \{\sOm{n} \}_{n \geq 0}$ forms a simplicial unital cdga. Let 
\begin{equation} \label{eq:Om-unit}
\unit_\Om \maps \kk \to \sOmb
\end{equation}
denote the unit, as a morphism between simplicial cdga. For each $n \geq 0$, the map $\unit_{\Om n}$
is a quasi-isomorphism of cochain complexes. Hence, $\sOmb$ satisfies the ``Poincar\'{e} Lemma'', in the sense of Rmk.\ \ref{rmk:poincare}. The simplicial cdga $\sOmb$ provides another way to construct a simplicial set from any $\csainf$-algebra $\AdQ{}$ in characteristic $0$. In contrast with the nerve, $\sNb(A)$, this simplicial set can be infinite-dimensional, even if $A$ is finite type.
\begin{definition} \label{def:sMCA}
The \df{Maurer-Cartan simplicial set} of $\AdQ{} \in \cAs$ is the simplicial set $\sMCAb(A)$, where
\[
\sMCA{n}(A):= \MCAs \bigl(A \ctensor \sOm{n}\bigr) \qquad \forall n\geq 0.
\]
\end{definition} 
Since the above assignment is built by composing the functor $A \ctensor - \maps \dga \to \cAs$ 
with the Maurer-Cartan functor $\MCAs(-)$ from Sec.\ \ref{sec:MC}, it induces a functor $\sMCAb \maps \cAs \to \sSet$.

\subsection{Complete shifted $L_\infty$-algebra} \label{sec:Lie}
A $\cinf$-algebra, or \df{complete shifted $L_\infty$-algebra} $(L,d,Q)$ is a  complete cochain complex $(L,d) \in \Ch(\kk)$ equipped with a degree $1$ codifferential $Q$ on the corresponding cofree cocommutative coalgebra $\bar{S}(L)$ which is compatible with the filtration on $L$ in the obvious way.
Similarly, a filtered $\infty$-morphism  between $\cinf$-algebras is a dg coalgebra map which is compatible with the filtrations. For the full details, we refer the reader to \cite[Sec.\ 5]{CR}, and the references cited there. We denote by $\cLie$ the category of $\cinf$-algebras and filtered $\infty$-morphisms.

Given $\LdQ{} \in \cLie$, recall that the \df{curvature}  $\curv_{\Lie} \maps L^0 \to L^{1}$ is the function which assigns to a degree zero element $x \in L^0 = \cF_1L^0$ the infinite series 
\begin{equation} \label{eq:curvLie}
\curv_\Lie(x)=d x + \sum_{m \geq 2} \frac{1}{m!}
\sQ_m(\underbrace{x\cdots x}_{\text{$m$ {\rm times}}}). 
\end{equation}

The Maurer-Cartan set of $\LdQ{}$ is the set of degree $0$ elements with vanishing curvature:
$\MC_\Lie(L):= \{ x \in L^0 \st \curv_{\Lie}(x)=0\}$. It extends to a functor 
\begin{equation} \label{eq:MCLie}
\MC_{\Lie}(-) \maps \cLie \to \Set. 
\end{equation}
The value of $\MC_{\Lie}$ on a morphism $\Ph \maps \LdQ{} \to \LdQ{}$ is given by 
$\MC_{\Lie}(\Phi)(x) = \sum_{n \geq 1} \frac{1}{n!}\Ph^1_n(x^n)$. (See \cite[Sec.\ 5.4]{CR} for more details.)\, Note the similarity with the definition of the  functor $\MCAs(-) \maps \cAs \to \Set$, in Eq.\ \ref{eq:phistar}. In particular, if $\Ph=\sPh \maps \LdQ{} \to \LdQ{\prime}$ is a strict morphism, then $\MC_{\Lie}(\Phi)(x) = \sPh(x)$, just like for $\cainf$-algebras.
As explained, for example in \cite[Sec.\ 5.6]{CR}, 
taking the MC elements of the completed tensor product of $\LdQ{}$ with $\sOmb$ functorially produces a Kan simplicial set. The \df{Maurer--Cartan simplicial set} $\sMCLb(L)$
of an $\cinf$-algebra $(L,d,Q)$ is the Kan simplicial set with $n$-simplices 
\[
\sMCL{n}(L):= \MC_{\Lie}\bigl(L \ctensor \sOm{n} \bigr).
\]
We denote by $\sMCLb \maps \cLie \to \Kan$ the corresponding functor.

\newcommand{\SQ}{Q^{\, \sym}} 
\subsection{Commutator brackets} \label{sec:com}
The commutator bracket gives any associative algebra the structure of a Lie algebra. T.\ Lada and M.\ Markl \cite[Thm.\ 3.1]{LM}  showed that this generalizes to an analogous relationship between $\sainf$-algebras and $\sinf$-algebras over a field of characteristic zero. Their construction easily extends to the complete filtered case. Given $\AdQ{} \in \cAs$, denote by $(\cL(A),d,\SQ)$ the $\cinf$-algebra whose underlying cochain complex is $(\cL(A),d):=(A,d)$, and whose codifferential on $\bar{S}(A)$ is given by
\begin{equation} \label{eq:sym}
(\SQ)^1_n(x_1 x_2\cdots x_n):= \sum_{\si \in S_n} \varepsilon(\si) Q^1_n(x_{\si(1)} \tensor x_{\si(2)} \tensor \cdots \tensor x_{\si(n)}).
\end{equation}
By following the proof of \cite[Thm.\ 3.1]{LM}, it is clear that the assignment 
$\AdQ{} \mapsto (\cL(A),d,\SQ)$ is functorial with respect to {\it strict} filtration preserving morphisms. Indeed, if $\Ph = \sPh \maps \AdQ{} \to \AdQ{\prime}$ is strict, then the equality
$Q^{\prime 1}_n \cc\Ph^n_n = \Ph^1_1 \cc Q^1_n$ implies that $(Q^{\prime \sym})^1_n \cc \Ph^n_n = \Ph^1_1\cc (\SQ)^1_n$. Hence, the corresponding functor between the subcategories containing only strict morphisms
\begin{equation} \label{eq:L}
\cL \maps \strcAs \to \strcLie
\end{equation}
is just the identity on hom-sets.

A direct comparison between the formulae for the curvature functions  
$\curv_{\Ass} \maps A^0 \to A^1$ and $\curv_\Lie \maps \cL(A)^0 \to \cL(A)^1$ shows that they have the same zero locus, i.e.\ $\MCAs(A) = \MC_\Lie \bigl( \cL(A) \bigr)$. Furthermore, our discussion above concerning the value of the functor \eqref{eq:MCLie} on strict morphisms implies that there is a natural isomorphism -- in fact, an equality -- of functors
\begin{equation} \label{eq:MCnat}
\MCAs(-) = \MC_{\Lie}\cc \cL(-) \maps \strcAs \to \Set.
\end{equation}
The analogous statement holds for the simplicial Maurer-Cartan sets, as well.
\begin{proposition} \label{prop:sMC-equal}
The functor $\sMCAb \maps \strcAs \to \Kan$ is equal to the composition\\
$\sMCLb \cc \cL(-) \maps \strcAs \to \Kan$.
\end{proposition}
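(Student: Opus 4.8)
The plan is to bootstrap from the already-established equality of functors $\MCAs(-) = \MC_\Lie \cc \cL(-) \maps \strcAs \to \Set$ in \eqref{eq:MCnat}, applied levelwise after tensoring with the simplicial cdga $\sOmb$. The only genuinely new input needed is that the commutator construction is compatible with the completed tensor product by a commutative dg algebra. So the first step is to establish the following. \emph{Claim:} for every $\csainf$-algebra $\AdQ{} \in \cAs$ and every unital commutative dg algebra $B$ (equipped with the discrete filtration), there is an equality of $\cinf$-algebras $\cL(A \ctensor B) = \cL(A) \ctensor B$, natural in $A$ and $B$.

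To prove the claim I would note that both sides have the same underlying complete cochain complex, namely $(A,d) \ctensor (B,\del)$, so it suffices to check that the two degree-$1$ codifferentials on $\bar{S}(A \ctensor B)$ agree. In degree $1$ both restrict to $d \tensor \id_B + \id_A \tensor \del$. For $n \geq 2$, the $n$-ary structure map of $\cL(A \ctensor B)$ applied to a word $(x_1 \tensor b_1) \cdots (x_n \tensor b_n)$ is, by \eqref{eq:sym}, the signed sum over $\si \in S_n$ of the structure maps of $Q_B$ evaluated on the permuted tensor word; expanding via the formula for the tensor codifferential analogous to \eqref{eq:Ainf-tensor1} yields a sum of terms of the shape $\pm\, Q^1_n(x_{\si(1)},\ldots,x_{\si(n)}) \tensor b_{\si(1)} \cdotp \cdots \cdotp b_{\si(n)}$. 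Because $B$ is graded commutative, $b_{\si(1)} \cdotp \cdots \cdotp b_{\si(n)} = \pm\, b_1 \cdotp \cdots \cdotp b_n$, and a Koszul-sign computation shows the accumulated signs recombine to give exactly the structure map of $\cL(A) \ctensor B$, i.e.\ $(\SQ)^1_n(x_1 \cdots x_n)$ tensored with $b_1 \cdotp \cdots \cdotp b_n$ up to sign. This is the complete filtered, ``in families'' version of \cite[Thm.\ 3.1]{LM}.

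Granting the claim, I would conclude as follows. Fix $n \geq 0$. Applying the claim with $B = \sOm{n}$, and then \eqref{eq:MCnat} to the $\csainf$-algebra $A \ctensor \sOm{n}$, produces the chain of equalities of sets
\[
\sMCA{n}(A) = \MCAs\bigl(A \ctensor \sOm{n}\bigr) = \MC_\Lie\bigl(\cL(A \ctensor \sOm{n})\bigr) = \MC_\Lie\bigl(\cL(A) \ctensor \sOm{n}\bigr) = \sMCL{n}\bigl(\cL(A)\bigr).
\]
These identifications are compatible with the cosimplicial structure on $\sOmb$: the simplicial structure maps on $\sMCAb(A)$ and $\sMCLb(\cL(A))$ are induced by the cofaces and codegeneracies of $\sOmb$ through the functors $A \ctensor -$ and $\cL(A) \ctensor -$ respectively, and the naturality of the claim in $B$ makes these structure maps match up. Hence $\sMCAb(A) = \sMCLb(\cL(A))$ as simplicial sets (both Kan, by Thm.\ \ref{thm:Kan} and \cite[Sec.\ 5.6]{CR}). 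On morphisms: a strict $\infty$-morphism $\Ph = \sPh$ induces $\Ph \ctensor \id_{\sOm{n}}$ on tensor products levelwise, and by \eqref{eq:L} the functor $\cL$ is the identity on strict hom-sets, so $\sMCAb(\Ph)$ and $\sMCLb(\cL(\Ph))$ are literally the same map. Therefore the two functors $\strcAs \to \Kan$ coincide.

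The main obstacle is precisely the sign bookkeeping in the claim: one must verify that symmetrizing the codifferential of $A \ctensor B$ and then using the graded commutativity of $B$ to reorder the algebra factors reassembles exactly the Koszul signs built into the $\cinf$-structure on $\cL(A) \ctensor B$. Everything else in the argument is formal once that identity is in hand.
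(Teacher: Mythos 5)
Your proposal is correct and follows essentially the same route as the paper: establish the equality $\cL(A \ctensor B) = \cL(A) \ctensor B$ for a commutative dg algebra $B$ (which the paper derives directly from Eq.\ \eqref{eq:sym} and the commutativity of $B$, leaving the Koszul-sign check implicit), then chain it with the equality $\MCAs(-) = \MC_{\Lie}\cc\cL(-)$ of \eqref{eq:MCnat} levelwise over $\sOmb$ and invoke naturality for the simplicial structure maps and strict morphisms. Your write-up simply makes the sign bookkeeping and the $B$-naturality more explicit than the paper does.
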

\begin{proof}
Given $\AdQ{} \in \cAs$ and a cdga $B \in \cdga$, recall from Sec.\ \ref{sec:Ainf-tensor} that the completed tensor product $(A \ctensor B, d_B, Q_B)$ inherits a $\csainf$ structure from $\AdQ{}$. Since the multiplication on $B$ is graded commutative, it follows from Eq.\ \ref{eq:sym} that we have an equality of $\cinf$-algebra\footnote{From here on,  for the sake of readability, we will sometimes omit the (co)differentials from our notation for $\csainf$ and $\cinf$-algebras.}
$\cL\bigl(A \ctensor B \bigr) = \cL(A) \ctensor B$. This upgrades to an equality  of functors
\[
\cL(A \ctensor -) = \cL(A) \ctensor - \maps \cdga \to \strcLie.
\]
Combining this with \eqref{eq:MCnat}, we obtain a sequence of equalities
\[
\begin{split}
\sMCA{n}(A) = \MCAs(A \ctensor \sOm{n}) &= \MC_{\Lie}\bigl( \cL( A \ctensor \sOm{n}) \bigr)\\
&= \MC_{\Lie}\bigl( \cL( A) \ctensor \sOm{n} \bigr) = \sMCL{n}\bigl(\cL(A) \bigr).
\end{split}
\]
It then follows from the definition of the functors $\cL(-)$, $\MCAs(-)$, and $\MC_{\Lie}(-)$, that the above equalities
are natural with respect to strict morphisms between $\cainf$-algebras.
\end{proof}
\begin{corollary}\label{cor:sMC-equal}
If $ \Ph \maps \AdQ{} \weq \AdQ{\prime}$ is a strict weak equivalence in $\strcAs$ then
$\sMCAb(\Ph) \maps \sMCAb(A) \to \sMCAb(A')$ is a homotopy equivalence of simplicial sets.
\end{corollary}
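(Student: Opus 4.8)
The plan is to deduce this corollary directly from Proposition \ref{prop:sMC-equal}, which on strict morphisms identifies the functor $\sMCAb$ with the composite $\sMCLb \cc \cL(-)$, together with the $L_\infty$-Goldman-Millson Theorem of \cite{DR} (in its ``shifted'' form, cf.\ \cite{DR-shift}). In short, the substantive work has already been carried out, and what remains is a brief bookkeeping argument reducing the associative statement to the Lie one.

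First I would apply the commutator functor $\cL(-) \maps \strcAs \to \strcLie$ of \eqref{eq:L} to the strict weak equivalence $\Ph$, producing a strict morphism $\cL(\Ph) \maps \cL(A) \to \cL(A')$ of complete shifted $L_\infty$-algebras; recall from Sec.\ \ref{sec:com} that $\cL(A)$ has underlying complete cochain complex $(A,d)$ and that the filtration compatibility of the commutator codifferential $Q^{\sym}$ is inherited from that of $Q$. Next I would verify that $\cL(\Ph)$ is a weak equivalence in $\cLie$. This is immediate: since $\Ph$ is strict, $\ctan\bigl(\cL(\Ph)\bigr) = \ctan(\Ph) = \Ph^1_1$, and by hypothesis $\Ph^1_1\vert_{\cF_n A}$ is a quasi-isomorphism for every $n \geq 1$ — precisely the condition for $\cL(\Ph)$ to be a weak equivalence of complete $L_\infty$-algebras.

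Applying the $L_\infty$-Goldman-Millson Theorem to $\cL(\Ph)$ then gives that $\sMCLb\bigl(\cL(\Ph)\bigr) \maps \sMCLb\bigl(\cL(A)\bigr) \weq \sMCLb\bigl(\cL(A')\bigr)$ is a homotopy equivalence of simplicial sets. Finally, Proposition \ref{prop:sMC-equal} identifies this map with $\sMCAb(\Ph) \maps \sMCAb(A) \to \sMCAb(A')$, which is therefore a homotopy equivalence, as claimed.

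I do not anticipate a genuine obstacle here: granting Proposition \ref{prop:sMC-equal}, the corollary is essentially formal. The only point needing a little care is the degree/shift dictionary relating the shifted $L_\infty$-algebras used throughout this paper with the conventions of \cite{DR}, which is harmless since all complexes are unbounded. An alternative, self-contained route would be to repeat the argument of Thm.\ \ref{thm:GM} with the interval model $N^\ast(\Del^1)$ replaced by $\Om^\ast(\Del^1)$ (Example \ref{ex:interval}), thereby establishing a Goldman-Millson statement for $\sMCAb$ from first principles; but this duplicates the homotopical algebra of Sec.\ \ref{sec:cat-props} and is less economical than the reduction via $\cL(-)$.
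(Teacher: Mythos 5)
Your argument is correct and is essentially identical to the paper's proof: apply $\cL(-)$ to $\Ph$, note that the resulting strict morphism is a weak equivalence of complete shifted $L_\infty$-algebras, invoke the $L_\infty$-Goldman-Millson Theorem of \cite{DR}, and transport the conclusion back through the identification of Prop.\ \ref{prop:sMC-equal}. The paper states this more tersely but the content is the same; your extra check that $\ctan(\cL(\Ph))=\Ph^1_1$ is the (correct, if routine) detail the paper leaves implicit.
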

\begin{proof}
The Goldman-Millson Theorem for $\cinf$-algebras \cite[Thm.\ 1.1]{DR} implies that\\
$\sMCLb(\Ph) \maps \sMCLb(\cL(A)) \to \sMCLb(\cL(A'))$ is a homotopy equivalence. Hence, the corollary follows from Prop.\ \ref{prop:sMC-equal}.
\end{proof}

\newcommand{\cS}[1]{\mathcal{S}_{#1}}
\newcommand{\cSb}{\cS{\bl}}
\subsection{Comparing $\sNb(A)$ with $\sMCLb(\cL(A))$} \label{sec:N-vs-sMC}
N.\ de Kleijn and F.\ Wierstra conjectured in \cite{DW} that the 
nerve $\sNb(A)$ of a $\cainf$-algebra $\AdQ{}$ is homotopy equivalent to the simplicial Maurer-Cartan set $\sMCLb(\cL(A))$ of its commutator $\cinf$-algebra $(\cL(A),d,\SQ)$.
In this section, we prove this statement.

In fact, we will prove something a bit stronger. To establish the setting, 
let $\sdga$ denote the category of simplicial unital dg associative algebras. The main examples are the normalized cochain algebra $N^\ast(\Del^\bl)$ from Sec.\ \ref{sec:norm-form}, and the polynomial de Rham forms $\sOmb$ described above. The tensor product in $\sdga$ extends, level-wise, to a symmetric monoidal structure on $\sdga$. In particular, we denote by $(\Om \tensor N)_\bl$ the simplicial dg algebra with $(\Om \tensor N)_n:= \sOm{n} \tensor N^\ast(\Del^n)$. 

It is clear that the construction of the simplicial Maurer-Cartan functor $\sMCAb(-)= \MCAs(-\ctensor \, \sOmb)$ generalizes in the following way: For any $B_\bl \in \sdga$ let
\begin{equation} \label{eq:MC-sdga}
\cSb^B \maps \cAs \to \sSet
\end{equation} 
be the simplicial set valued functor $\cSb^B(A):= \MCAs(A \ctensor B_\bl)$. We now state the main result.
\begin{theorem}\label{thm:N-vs-MC}
Let $\AdQ{}$ be a $\cainf$-algebra. There are weak homotopy equivalences of simplicial sets
\begin{equation} \label{diag:N-vs-MC}
\begin{tikzdiag}{2}{2}
{
\sNb(A)\& \cSb^{\Om \tensor N}(A)\& \sMCLb \bigl(\cL(A) \bigr) \\
};
\path[->,font=\scriptsize]
(m-1-1) edge node[auto] {$f_A$}node[auto,below]{$\sim$} (m-1-2)
(m-1-3) edge node[auto,swap] {$g_A$}node[auto,below]{$\sim$} (m-1-2)
;
\end{tikzdiag}
\end{equation}
which are natural with respect to strict $\infty$-morphisms: Given  $\Tha = \Tha^1_1 \maps \AdQ{} \to \AdQ{\prime}$ in $\strcAs$, the following diagram commutes
\begin{equation} \label{diag:nat}
\begin{tikzdiag}{2}{2} 
{
\sNb(A)\& \cSb^{\Om \tensor N}(A)\& \sMCLb \bigl(\cL(A) \bigr) \\
\sNb(A')\& \cSb^{\Om \tensor N}(A')\& \sMCLb \bigl(\cL(A') \bigr) \\
};
\path[->,font=\scriptsize]
(m-1-1) edge node[auto] {$f_A$}node[auto,below]{$\sim$} (m-1-2)
(m-1-3) edge node[auto,swap] {$g_A$}node[auto,below]{$\sim$} (m-1-2)
(m-2-1) edge node[auto] {$f_{A'}$}node[auto,below]{$\sim$} (m-2-2)
(m-2-3) edge node[auto,swap] {$g_{A'}$}node[auto,below]{$\sim$} (m-2-2)
(m-1-1) edge node[auto,swap] {$\sNb(\Tha)$} (m-2-1)
(m-1-2) edge node[auto] {$\cSb^{\Om \tensor N}(\Tha)$} (m-2-2)
(m-1-3) edge node[auto] {$\sMCLb(\cL(\Tha))$} (m-2-3)
;
\end{tikzdiag}
\end{equation}

\end{theorem}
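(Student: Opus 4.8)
The strategy is to build the two comparison maps $f_A$ and $g_A$ from the unit maps of the simplicial dg algebras involved, and then to prove they are weak equivalences by a standard ``discrete vs.\ complete tensor'' argument combined with the Goldman-Millson theorem (Thm.\ \ref{thm:GM}) on the $A_\infty$ side and its $L_\infty$ counterpart \cite[Thm.\ 1.1]{DR} on the Lie side. Concretely: the cosimplicial maps $\unit_\bl \maps \FF \to N^\ast(\Del^\bl)$ and $\unit_\Om \maps \kk \to \sOmb$ induce, by tensoring into $(\Om\tensor N)_\bl$, morphisms of simplicial unital dg algebras $N^\ast(\Del^\bl) \to (\Om\tensor N)_\bl$ and $\sOmb \to (\Om\tensor N)_\bl$ (using the units on the missing factor). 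Applying the functor $\cSb^{(-)}(A) = \MCAs(A \ctensor (-)_\bl)$ — or more precisely, first tensoring $A$ with these and then taking $\MCAs$ — yields the maps
\[
f_A \maps \sNb(A) = \cSb^N(A) \to \cSb^{\Om\tensor N}(A), \qquad
g_A \maps \sMCLb(\cL(A)) = \sMCAb(A) = \cSb^\Om(A) \to \cSb^{\Om\tensor N}(A),
\]
where the middle equality $\sMCAb(A) = \sMCLb(\cL(A))$ is Prop.\ \ref{prop:sMC-equal}. Naturality of diagram \eqref{diag:nat} with respect to strict $\infty$-morphisms is then automatic, since everything is assembled from the functors $- \ctensor B_\bl$ and $\MCAs(-)$, both of which are functorial in the appropriate (strict) sense, and the comparison maps come from fixed morphisms of simplicial dg algebras that do not involve $A$.

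For the weak-equivalence claims, the key input is that both $\unit_{\Om n} \maps \kk \to \sOm{n}$ and $\unit_n \maps \FF \to N^\ast(\Del^n)$ are quasi-isomorphisms of dg algebras (the Poincar\'e Lemma, Rmk.\ \ref{rmk:poincare}; and the characteristic-zero Poincar\'e Lemma for $\sOmb$). Hence $\sOm{n} \xto{\unit \tensor \id} \sOm{n} \tensor N^\ast(\Del^n)$ and $N^\ast(\Del^n) \xto{\id \tensor \unit} \sOm{n}\tensor N^\ast(\Del^n)$ are levelwise quasi-isomorphisms of dg algebras. By Lemma \ref{lem:ctensor}, tensoring the complete cochain complex $A$ (on the level of tangent complexes) against such a quasi-isomorphism of discrete complexes yields a levelwise weak equivalence in $\cChF$; in particular, for each fixed $n$, the induced maps $A \ctensor N^\ast(\Del^n) \to A \ctensor (\sOm{n}\tensor N^\ast(\Del^n))$ and $\cL(A) \ctensor \sOm{n} \to \cL(A)\ctensor(\sOm{n}\tensor N^\ast(\Del^n))$ are weak equivalences of $\csainf$- (resp.\ $\cinf$-) algebras in the sense of Def.\ \ref{def:cAsmap}. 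Since these morphisms are moreover strict $\infty$-morphisms, Thm.\ \ref{thm:GM} (applied fibrewise, i.e.\ for each simplicial level) shows that $\sNb$ (resp.\ $\sMCLb$, via \cite[Thm.\ 1.1]{DR}) sends each of them to a homotopy equivalence of simplicial sets. The upshot is that $f_A$ and $g_A$ are ``levelwise'' built from weak equivalences; one concludes they are weak homotopy equivalences of the total simplicial sets by the realization/bisimplicial argument: view $\cSb^{\Om\tensor N}(A)$, $\sNb(A)$, $\sMCLb(\cL(A))$ as the diagonals of bisimplicial sets $[m],[n] \mapsto \MCAs(A \ctensor B_m^{(1)} \ctensor B_n^{(2)})$ and invoke the Bousfield-Kan / Dold-Puppe theorem that a map of bisimplicial sets which is a weak equivalence in each fixed degree induces a weak equivalence on diagonals.

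The main obstacle I expect is the last step — passing from ``weak equivalence in each simplicial level'' to ``weak equivalence on the diagonal.'' This requires knowing that the bisimplicial objects in play are genuinely bisimplicial sets (which they are, since $\MCAs(A\ctensor B_m \ctensor B_n)$ is functorial in both $[m]$ and $[n]$ via $B_\bl = N^\ast(\Del^\bl)$ and $\sOmb$), and it requires the fibrewise weak equivalences to be compatible across levels — which holds because the comparison maps are induced by the fixed cosimplicial algebra maps $\unit_\bl$, $\unit_\Om$. One must also take care that the two comparison maps really do fit as claimed: $f_A$ lands in $\cSb^{\Om\tensor N}$ via the $N$-coordinate unit $\kk \to \sOmb$ (requires characteristic zero, which is in force here), while $g_A$ uses $\FF = \kk \to N^\ast(\Del^\bl)$, and Lemma \ref{lem:ainf-tensor} is needed to identify $A \ctensor (\sOm{n} \tensor N^\ast(\Del^n))$ with the iterated tensor product in either order, so that both maps target the \emph{same} object $\cSb^{\Om\tensor N}(A)$ with compatible $\csainf$-structures. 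Modulo these bookkeeping points — which are the ``routine calculations'' the plan defers — the proof is a clean assembly of Thm.\ \ref{thm:GM}, \cite[Thm.\ 1.1]{DR}, Prop.\ \ref{prop:sMC-equal}, Lemma \ref{lem:ctensor}, and the diagonal lemma for bisimplicial sets.
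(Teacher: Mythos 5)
Your proposal is correct and follows essentially the same route as the paper: both maps are induced by the unit maps of $\sOmb$ and $N^\ast(\Del^\bl)$ into $(\Om\tensor N)_\bl$, shown to be levelwise homotopy equivalences via Lemma \ref{lem:ctensor} together with Theorem \ref{thm:GM} (resp.\ the $L_\infty$ Goldman--Millson theorem through Prop.\ \ref{prop:sMC-equal}), and then assembled through the diagonal of a pointwise weak equivalence of bisimplicial sets, with Lemma \ref{lem:ainf-tensor} handling the reparenthesization and naturality following from functoriality. This is precisely the paper's argument (the paper cites \cite[Prop.\ IV.1.7]{GJ} for the diagonal step), so no further comment is needed.
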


To prove the theorem, we will first need to collect some facts concerning the homotopy theory of bisimplicial sets. Our primary reference is \cite[Ch.\ IV]{GJ}.
\subsubsection{Bisimplicial objects} \label{sec:bisimp}
Recall that given a category $\catC$, we denote by $\sC$ the category of simplicial objects in $\catC$. Analogously, let $\ssC$ be the category of \df{bisimplicial objects} $X = \{X_{mn} \}_{m,n\geq 0}$ in $\catC$, i.e.\ functors $X \maps \Del^{\op}  \times \Del^{\op} \to \catC$, with $X(m,n):=X_{mn}$. We use the convention that for $X \in \ssC$, $X_{mn}$ denotes the object of simplicies in ``horizontal dimension'' $m$ and ``vertical dimension'' $n$. At the same time, we will also treat $\ssC$ as the category of simplicial objects in $\sC$. If $X$ is a bisimplicial object, then denote by $X(-)_\bl \maps \Del^{\op} \to \sC$ the functor which sends the ordinal $[m]$ to the $m$th vertical simplicial object $X(m)_\bl:=X_{m \bul} \in \sC$.  

Let $\diag \maps \Del^{\op} \to \Del^{\op} \times \Del^{\op}$ and $p_2 \maps \Del^{\op} \times \Del^{\op} \to \Del^{\op}$ be the diagonal functor $\diag([m]):=([m],[m])$,  and the projection functor  $p_2([m],[n]):=[n]$, respectively. Pre-composition with $\diag$ and $p_2$ induces functors between the  categories $\ssC$ and $\sC$:
\[
\diag^{\sharp} \maps \ssC \to \sC, \quad \diag^{\sharp}(X)_{m}:= X_{mm} \quad \forall m \geq 0,
\]
and
\[
p_2^{\sharp} \maps \sC \to \ssC, \quad  p_2^{\sharp}(S)_{mn}:=S_n \quad \forall m,n \geq 0.
\]
If $S_{\bul} \in \sC$, then the functor
$p_2^\sharp(S)(-)_\bl   \maps \Del^{op} \to \sC$ is just the constant functor $[m] \mapsto S_\bl$. Also, note that we have an equality of functors $\diag^{\sharp} \! \cc \,  p_2^{\sharp} = \id_{\sC}.$

\begin{definition} \label{def:bi-weq}
A morphism of bisimplicial sets  $f \maps X \to Y$ in $\ssSet$ is a \df{point-wise weak equivalence}
if, for all $m \geq 0$, $f_{m \bul} \maps X(m)_{\bul} \to Y(m)_{\bul}$ is a weak homotopy equivalence in $\sSet$.
\end{definition}

\begin{proposition}[Prop.\ IV.1.7 \cite{GJ}] \label{prop:bi-weq}
If $f \maps X \to Y$ is a point-wise weak equivalence in $\ssSet$ then
$\diag^\sharp(f) \maps \diag^{\sharp}(X) \to \diag^{\sharp}(Y)$ is a weak homotopy equivalence in $\sSet$.
\end{proposition}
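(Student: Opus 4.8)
The plan is to identify the diagonal with a realization functor and then invoke the Reedy machinery together with Ken Brown's lemma. Viewing a bisimplicial set $X$ as a simplicial object $[m] \mapsto X_{m\bl}$ in $\sSet$, I would first establish a natural isomorphism
\[
\diag^\sharp(X) \cong \int^{[m]} X_{m\bl} \times \Del^m,
\]
the coend (tensor) of $X$ against the standard cosimplicial simplicial set $\Del^\bl \maps \Ord \to \sSet$. This is a direct co-Yoneda computation: evaluating the right-hand side in vertical degree $k$ gives $\int^{[m]} X_{mk} \times \hom_{\Ord}([k],[m])$, and since $X_{mk}$ is contravariant and $\hom_{\Ord}([k],[m])$ covariant in $[m]$, the co-Yoneda (density) formula collapses this to $X_{kk} = \diag^\sharp(X)_k$, naturally in $[k]$. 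Thus $\diag^\sharp = (-)\otimes_\Ord \Del^\bl$ is exactly the realization functor $\ssSet \to \sSet$ determined by $\Del^\bl$, and $\diag^\sharp(f) = f \otimes_\Ord \Del^\bl$.

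Next I would equip $\ssSet = \sSet^{\Ord^{\op}}$ with the Reedy model structure induced from the Kan--Quillen structure on $\sSet$. The one genuinely nontrivial input is that realization against a \emph{Reedy cofibrant} cosimplicial object is a left Quillen functor. Since the latching maps $\pa\Del^m \emb \Del^m$ are monomorphisms, the cosimplicial object $\Del^\bl$ is Reedy cofibrant, so $(-)\otimes_\Ord \Del^\bl$ is left Quillen with respect to the Reedy structure on the source. I would either cite the standard result on realizations of Reedy-cofibrant cosimplicial objects, or, for a self-contained argument, unwind it into an induction over the skeletal filtration of $X$ in which the gluing lemma for simplicial sets is applied at each stage, using the pushouts built from the latching maps $L_m X \emb X_{m\bl}$ and $\pa\Del^m \emb \Del^m$ that attach successive skeleta of the realization.

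Finally I would note that every bisimplicial set is Reedy cofibrant as an object of $\sSet^{\Ord^{\op}}$, because the latching map $L_m X \emb X_{m\bl}$ is always a monomorphism of simplicial sets, hence a cofibration; and that a morphism $f$ is a \emph{point-wise} weak equivalence in the sense of Def.\ \ref{def:bi-weq} precisely when it is a Reedy (levelwise) weak equivalence between these cofibrant objects. Ken Brown's lemma then guarantees that a left Quillen functor sends weak equivalences between cofibrant objects to weak equivalences; applied to $\diag^\sharp$ and $f$, this yields that $\diag^\sharp(f)$ is a weak homotopy equivalence in $\sSet$. The main obstacle is the left-Quillen claim for the realization functor (equivalently, the gluing-lemma induction over skeleta); once the coend description is in hand, the remaining steps are formal.
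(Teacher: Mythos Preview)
Your argument is correct. Note, however, that the paper does not supply its own proof of this proposition: it is quoted verbatim from Goerss--Jardine \cite[Prop.\ IV.1.7]{GJ} and used as a black box. The approach you outline---identifying $\diag^\sharp$ with the realization $(-)\otimes_{\Ord}\Del^\bl$, then showing this is left Quillen for the Reedy structure and invoking Ken Brown---is the standard modern packaging of exactly the argument Goerss--Jardine give. Their exposition unwinds the Reedy machinery into the explicit skeletal induction you describe as the ``self-contained'' alternative: one builds the realization as a sequential colimit over skeleta, each stage being a pushout along a map induced from $\pa\Del^m \emb \Del^m$, and the gluing lemma propagates the levelwise weak equivalence through these pushouts and the colimit. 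So your high-level Reedy/Ken Brown formulation and their hands-on filtration argument are the same proof at different levels of abstraction; neither buys anything the other does not.
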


\subsubsection{Proof of Theorem \ref{thm:N-vs-MC}}
\paragraph{The weak equivalence $f_A \maps \sNb(A) \to \cSb^{\Om \tensor N}(A)$\\}
We will first construct the weak equivalence $f_A$ in diagram \eqref{diag:N-vs-MC}. Let $m \geq 0$. As discussed above, the simplicial unit map \eqref{eq:Om-unit} for $\sOmb$ gives a quasi-isomophism $\unit_{\Om_m} \maps \kk \weq \sOm{m}$ of dg algebras. 
By applying the functor $A \ctensor - \maps \dga \to \strcAs$ to this quasi-isomorphism,
we obtain, via Lemma \ref{lem:ctensor}, a strict weak equivalence of $\cainf$-algebras
\[
\Phi_A(m):= \id_A \ctensor \unit_{\Om_m}  \maps  \AdQ{} \weq (A \ctensor \sOm{m}, d_{\Om_m}, Q_{\Om_m}).
\]
Therefore, for all $m \geq 0$, the Goldman-Millson theorem (Thm.\ \ref{thm:GM}) implies that the induced map between the nerves
is a homotopy equivalence:
\begin{equation} \label{eq:f1}
\sNb(\Ph_A(m)) \maps \sNb(A) \xto{\sim} \sNb(A \ctensor \sOm{m}).
\end{equation}
For each $m,n \geq 0$, let $X_{mn}$ denote the set 
\[
X_{mn}:= \sN_{n}(A \ctensor \sOm{m})= \MCAs\bigl( (A \ctensor \sOm{m}) \ctensor N^\ast(\Del^n) \bigr).
\]
The functoriality of the composition  $\sN_\bl \cc (A \ctensor -) \maps \dga \to \sSet$ implies 
that the collection  $\{X_{mn}\}$ forms a bisimplicial set $X \in \ssSet$. Recall from  
Lemma \ref{lem:ainf-tensor} that we can naturally reparenthesize the tensor products in $\strcAs$:
$(A \ctensor \sOm{m}) \ctensor N^\ast(\Del^n) \cong 
A \ctensor (\sOm{m} \tensor N^\ast(\Del^n))$. As a result, we obtain a natural isomorphism of simplicial sets $\diag^\sharp (X) \cong \cSb^{\Om \tensor N}(A)$.

Next, given a morphism between ordinals $\al \maps [\el] \to [k]$, let $\al^\sharp \maps \sOm{k} \to \sOm{\el}$ denote the corresponding morphism of dg algebras. Since the unit of $\sOmb$ is a simplicial map, the equality $\al^\sharp \cc \unit_{\Om_k} = \unit_{\Om_\el}$ holds, and therefore
\[
\sNb( \id_A \ctensor \al^\sharp) \cc  \sNb(\Ph_A(k)) =\sNb(\Ph(\el)).
\]   
Hence, the collection of simplicial maps $\{\sNb(\Ph_A(m))\}_{m \geq 0}$ assemble together to form a morphism of bisimplicial sets $\vph_A \maps p_2^{\sharp}(\sNb(A)) \to X$,  
where
\[
\vph_{A_{mn}}:=\sNb(\Ph_A(m))_n \maps \sN_n(A) \to X_{mn}.
\]  
From the weak equivalences \eqref{eq:f1}, we see that $\vph_A$ is point-wise weak equivalence of bisimplicial sets. Let $f_A \maps 
\sNb(A) \to \cSb^{\Om \tensor N}(A)$ be the image of $\vphi_A$ under the diagonal functor $\diag^\sharp \maps \ssSet \to \sSet$. Then Prop.\ \ref{prop:bi-weq} implies that $f_A$ is a weak equivalence of simplicial sets.

Finally, let  $\Tha:=\Tha^1_1 \maps \AdQ{} \to \AdQ{\prime}$ be a strict $\infty$-morphism. For each $m \geq 0$, the equality $\Tha \ctensor \id_{\sOm{m}} \, \cc \, \Ph_A(m) = \Ph_{A'}(m) \cc \Tha$ holds as morphisms in $\strcAs$ between $\AdQ{}$ and $(A' \ctensor \sOm{m}, d'_{\Om_m}, Q'_{\Om_m})$. All of the constructions used in building the maps $f_A$ and $f_{A'}$ involve either natural isomorphisms between tensor products or applying functors to the morphisms $\Ph_A(m)$ and $\Ph_{A'}(m)$. Hence, the diagram
\begin{equation} \label{diag:nat1}
\begin{tikzdiag}{2}{2} 
{
\sNb(A)\& \cSb^{\Om \tensor N}(A)\\
\sNb(A')\& \cSb^{\Om \tensor N}(A')\\
};
\path[->,font=\scriptsize]
(m-1-1) edge node[auto] {$f_A$}node[auto,below]{$\sim$} (m-1-2)
(m-2-1) edge node[auto] {$f_{A'}$}node[auto,below]{$\sim$} (m-2-2)
(m-1-1) edge node[auto,swap] {$\sNb(\Tha)$} (m-2-1)
(m-1-2) edge node[auto] {$\cSb^{\Om \tensor N}(\Tha)$} (m-2-2)
;
\end{tikzdiag}
\end{equation}
 commutes. This proves the first half of the naturality statement \eqref{diag:nat}.

\paragraph{The weak equivalence $g_A \maps \sMCLb(\cL(A)) \to \cSb^{\Om \tensor N}(A)$\\}
Proposition \ref{prop:sMC-equal} gives a natural identification $\sMCAb(A) = \sMCLb(\cL(A))$.
The tensor product of simplicial dg algebras is a symmetric monoidal structure on the category $\sdga$. Hence, the braiding provides a natural isomorphism $\cSb^{\Om \tensor N} \simeq \cSb^{N \tensor \Om}$ of functors from $\strcAs$ to $\sSet$. 
Therefore, it suffices to construct a map of simplicial sets
\[
\ti{g}_A \maps \sMCAb(A) \to \cSb^{N \tensor \Om}(A)   
\]
and verify that it is a weak homotopy equivalence. We proceed as in the construction of $f_A$ above, swapping the roles played by the simplicial dg algebras $N^\ast(\Del^\bl)$ and $\sOmb$.
The simplicial unit  $\unit_{N} \maps \kk \weq N^\ast(\Del^\bl)$ provides, for each $m \geq 0$, 
a strict weak equivalence of $\cainf$-algebras
\[
\Psi_A(m):= \id_A \ctensor \unit_{N_m}  \maps  \AdQ{} \weq (A \tensor N^\ast(\Del^m), d_{N}, Q_{N}).
\]
Corollary \ref{cor:sMC-equal} implies that $\sMCAb(\Psi_A(m)) \maps \sMCAb(A) \to \sMCAb(A \tensor N^\ast(\Del^m))$ is homotopy equivalence. Let $Y \in \ssSet$ denote the bisimplicial set
\[
Y_{mn}:= \sMCA{n}(A \tensor N^\ast(\Del^m)) = \MCAs\bigl( (A \tensor N^\ast(\Del^m)) \ctensor \sOm{n}     \bigr)
\]
By Lemma \ref{lem:ainf-tensor}, there is a strict natural isomorphism of $\cainf$-algebras
$\bigl(A \tensor N^\ast(\Del^m) \bigr) \ctensor \sOm{n} \cong
A \ctensor \bigl(N^\ast(\Del^m) \tensor \sOm{n}\bigr)$. Hence, $\diag^\sharp (Y) \cong \cSb^{N \tensor \Om}(A)$. The collection of simplicial maps $\{\sMCAb(\Psi_A(m))\}$ defines a point-wise weak equivalence of bisimplicial sets $\psi_A \maps p^{\sharp}_2(\sMCAb(A)) \to Y$, where
$\psi_{A_{mn}}:= \sMCA{n}(\Psi_A(m))$. Applying the diagonal functor gives a weak homotopy equivalence
\[
\ti{g}_A:= \diag^\sharp(\psi_A) \maps \sMCAb(A) \weq \cSb^{N \tensor \Om}(A), 
\]
thanks to Prop.\ \ref{prop:bi-weq}. 

Finally, if $\Tha:=\Tha^1_1 \maps \AdQ{} \to \AdQ{\prime}$ is a strict $\infty$-morphism, then the argument demonstrating the commutativity of diagram \eqref{diag:nat1} also implies ({\it mutatis mutandis}) that the diagram
\begin{equation} 
\begin{tikzdiag}{2}{2} 
{
\sMCAb(A)\& \cSb^{N \tensor \Om}(A)\\
\sMCAb(A')\& \cSb^{N \tensor \Om}(A')\\
};
\path[->,font=\scriptsize]
(m-1-1) edge node[auto] {$\ti{g}_A$}node[auto,below]{$\sim$} (m-1-2)
(m-2-1) edge node[auto] {$\ti{g}_{A'}$}node[auto,below]{$\sim$} (m-2-2)
(m-1-1) edge node[auto,swap] {$\sMCAb(\Tha)$} (m-2-1)
(m-1-2) edge node[auto] {$\cSb^{N \tensor \Om}(\Tha)$} (m-2-2)
;
\end{tikzdiag}
\end{equation}
commutes. Combining this with the natural isomorphisms $\sMCLb(\cL(A)) \cong \sMCAb(A)$, and 
$\cSb^{\Om \tensor N}(A) \cong \cSb^{N \tensor \Om}(A)$ verifies the second half of the naturality statement \eqref{diag:nat}.

This completes the proof of Thm.\ \ref{thm:N-vs-MC}. \hfill \qed

\appendix
\addtocontents{toc}{\protect\setcounter{tocdepth}{0}}

\section{Appendix: Proof of Proposition \ref{prop:strict-pb}} \label{sec:apndx}
Let $\Ph=\sPh \maps (A,d,Q) \fib (A'',d'',Q'')$ be a strict fibration in $\cAs$ 
and $\Theta \maps (A',d',Q') \to (A'',d'',Q'')$ an arbitrary morphism in $\cAs$. 
Here we prove the existence of a pullback diagram in $\cAs$ containing $\Theta$ and $\Ph$ which lifts the diagram \eqref{diag:pb-Ch} in $\cChF$ through the tangent functor $\ctan \maps \cAs \to \cChF$.

We proceed exactly as in the case for $\cinf$-algebras over a field of characteristic zero \cite[Sec.\ 6.1.1]{CR}\footnote{The proof in {\it loc.\ cit}.\ is the complete filtered analog of a construction given by B.\ Vallette \cite{Vallette:2014} for pullbacks of $\infty$-morphisms between homotopy algebras.}.
First, recall that the pullback diagram \eqref{diag:pb-Ch} of the tangent maps in $\cChF$ is  
\[
\begin{tikzpicture}[descr/.style={fill=white,inner sep=2.5pt},baseline=(current  bounding  box.center)]
\matrix (m) [matrix of math nodes, row sep=2em,column sep=3em,
  ampersand replacement=\&]
  {  
\bigl( \ti{A}, \ti{d} \bigr) \& (A, d) \\
(A',d') \& (A'', d'') \\
}
; 
  \path[->,font=\scriptsize] 
   (m-1-1) edge node[auto] {$\pr \circ h $} (m-1-2)
   (m-1-1) edge node[auto,swap] {$\pr'$} (m-2-1)
   (m-1-2) edge node[auto] {$\sPh$} (m-2-2)
   (m-2-1) edge node[auto] {$\Theta^1_1$} (m-2-2)
  ;

  \begin{scope}[shift=($(m-1-1)!.4!(m-2-2)$)]
  \draw +(-0.25,0) -- +(0,0)  -- +(0,0.25);
  \end{scope}
\end{tikzpicture}
\]
with $\ti{A}= A' \tim \ker \sPh$, and $\ti{d}= j\circ (d' \tim d) \circ h$. Here, as in Eq.\ \ref{eq:hj}, $h$ and $j$ are the maps $h(a',a):=(a', \si \sThe_1(a') + a)$ and
$j(a',a):= \bigl(a', a-\si \sThe_1(a') \bigr)$ in $\cVectF$. The map $\si$ is a chosen splitting of $\sPh \maps A \to A''$ as a morphism in $\cVect$.

Denote elements in $A' \tim A$ as vectors  $\bb{a}:=(a',a)$. Let  $H^1_k \maps \T^k(A' \tim A) \to  A' \tim  A$, and $J^1_k \maps \T^k( A' \tim A) \to  A' \tim A$ be the linear maps
\begin{equation} \label{eq:H}
\begin{split}
&H^1_1(\bb{a}):= h(\bb{a}), \quad  J^1_1(\bb{a}):= j(\bb{a}), \\
&H^1_{k}(\bb{a}_1,\ldots,\bb{a}_k):= \bigl(0,  \si\sThe_{k}(a'_1,\ldots,a'_k) \bigr), \\
&J^1_{k}(\bb{a}_1,\ldots,\bb{a}_k):= \bigl(0, - \si \sThe_{k}(a'_1,\ldots,a'_k) \bigr).
\end{split}
\end{equation}
A direct calculation shows that $H  J = J  H = \id_{\T(A' \tim A)}$. Since $\T(\ti{A}) \sse \T(A' \dsum A)$, we can define 
a degree 1 codifferential $\ti{Q} \maps \T(\ti{A}) \to \T(\ti{A})$ via
$\ti{Q}:= J \cc Q_{\tim} \cc  H \vert_{\T(\ti{A})}$,
where $Q_{\tim}$ is the $\sainf$-structure  on $A' \tim A$ induced by $Q'$ and $Q$. By construction, $\ti{Q}^1_1 = \ti{d}$,  $\ti{Q}^2 =0$, and a direct calculation shows that $ \im \ti{Q} \sse \T(\ti{A})$. Since $H$, $J$, and $\ti{Q}$ are all constructed from filtration preserving maps,
$(\ti{A}, \ti{d}, \ti{Q})$ is an object in $\cAs$, and
$H \vert_{\T(\ti{A})} \maps (\ti{A}, \ti{d},\ti{Q}) \to (A' \tim  A, d_{\tim},Q_\tim) $
is a morphism in $\cAs$.

We now observe that
\begin{equation} \label{diag:strict_pullback3}
\begin{tikzpicture}[descr/.style={fill=white,inner sep=2.5pt},baseline=(current  bounding  box.center)]
\matrix (m) [matrix of math nodes, row sep=2em,column sep=3em,
  ampersand replacement=\&]
  {  
(\ti{A},\ti{d},\ti{Q})  \& \bigl (  A, d, Q  ) \\
\bigl   (A', d', Q') \& \bigl (  A'', d'', Q'' ) \\
}; 
  \path[->,font=\scriptsize] 
   (m-1-1) edge node[auto] {$\ppr H$} (m-1-2)
   (m-1-1) edge node[auto,swap] {$\ppr'H$} (m-2-1)
   (m-1-2) edge node[auto] {$\Ph$} (m-2-2)
   (m-2-1) edge node[auto] {$\Theta$} (m-2-2)
  ;

  \begin{scope}[shift=($(m-1-1)!.4!(m-2-2)$)]
  \draw +(-0.25,0) -- +(0,0)  -- +(0,0.25);
  \end{scope}
\end{tikzpicture}
\end{equation}
is a pullback diagram in the category $\cAs$. Indeed, if $(B,d^B,Q^B) \xto{\Psi} (A,d,Q)$ and $(B,d^B,Q^B) \xto{\Psi'} (A',d',Q')$ are morphisms in $\cAs$ such that $\Phi\Psi=\Theta \Psi'$, then the $\infty$-morphism
\begin{equation} \label{eq:strict-pb1}
J \cc (\Psi \tim \Psi') \maps (B,d^B,Q^B) \to (\ti{A}, \ti{d},\ti{Q})
\end{equation}
is the unique morphism in $\cAs$ which makes the relevant diagrams commute. \hfill \qed

\vfill

\noindent\rule{4cm}{0.4pt}

{\footnotesize{
\noindent\textsc{Department of Mathematics and Statistics, University of Nevada,
 Reno. \\
1664 N. Virginia Street Reno, NV 89557-0084 USA}\\
\emph{E-mail address:} \textbf{pmilham@nevada.unr.edu, alexmilham.math@gmail.com}
\\

\noindent\textsc{Department of Mathematics and Statistics, University of Nevada,
 Reno. \\
1664 N. Virginia Street Reno, NV 89557-0084 USA}\\
\emph{E-mail address:} \textbf{chrisrogers@unr.edu, chris.rogers.math@gmail.com}

}
}

\end{document}